\newtheorem{theorem}{Theorem}[section]
\newtheorem{lemma}[theorem]{Lemma}
\newtheorem{proposition}[theorem]{Proposition}
\newtheorem{corollary}[theorem]{Corollary}
\theoremstyle{definition}
\newtheorem{definition}[theorem]{Definition}
\newtheorem{question}[theorem]{Question}
\newtheorem{remark}[theorem]{Remark}
\newtheorem*{remark*}{Remark}
\newtheorem{example}[theorem]{Example}
\newcommand{\Mult}{\operatorname{Mult}}
\newcommand{\Aut}{\operatorname{Aut}}
\newcommand{\dist}{\operatorname{dist}}
\newcommand{\dom}{\operatorname{dom}}
\newcommand{\N}{\mathbb{N}}
\newcommand{\B}{\mathbb{B}}
\newcommand{\mcc}{M\textsuperscript{c}Carthy}
\renewcommand{\MR}[1]{}
\title{Simply interpolating sequences in complete Pick spaces}
\author[N. Chalmoukis]{Nikolaos Chalmoukis} \address{Dipartimento di Matematica e Applicazioni, Universit\'a degli studi di Milano Bicocca, via Roberto Cozzi, 55
20125, Milano, Italy}
\email{nikolaos.chalmoukis@unimib.it}
\thanks{NC was partially supported by the Hellenic Foundation for Research and Innovation
(H.F.R.I.) under the ``2nd Call for H.F.R.I. Research Projects to support Faculty Members
\& Researchers'' (Project Number: 73342). He is also a member of GNAMPA of the Istituto Nazionale di Alta Matematica (INdAM)}
\author[A. Dayan]{Alberto Dayan}
\address{Fachrichtung Mathematik, Universit\"at des Saarlandes, 66123 Saarbr\"ucken, Germany}
\email{dayan@math.uni-sb.de}
\author[M. Hartz]{Michael Hartz}
\address{Fachrichtung Mathematik, Universit\"at des Saarlandes, 66123 Saarbr\"ucken, Germany}
\email{hartz@math.uni-sb.de}
\thanks{AD and MH were partially supported by the Emmy Noether Program of the German Research Foundation (DFG Grant 466012782)}
\subjclass[2010]{Primary 46E22; Secondary 30E05}
\keywords{Interpolating sequence, complete Pick space, strong separation, Feichtinger conjecture, uniformly minimal}
\begin{document}

\begin{abstract}
  We characterize simply interpolating sequences (also known as onto interpolating sequences) for complete Pick spaces.
  We show that a sequence is simply interpolating if and only if it is strongly separated.
  This answers a question of Agler and M\textsuperscript{c}Carthy.
  Moreover, we show that in many important examples of complete Pick spaces, including weighted Dirichlet spaces on the unit disc
  and the Drury--Arveson space in finitely many variables, simple interpolation does not imply multiplier interpolation.
  In fact, in those spaces, we construct simply interpolating sequences that generate infinite measures,
  and uniformly separated sequences that are not multiplier interpolating.
\end{abstract}

\maketitle

\section{Introduction}

Interpolating sequences in spaces of holomorphic functions are relevant in the context
of questions in function theory, operator theory and Banach algebras. We begin by recalling the classical setting.
Let $\mathbb{D}$ denote the open unit disc
and let $H^\infty$ be the algebra of bounded holomorphic functions on $\mathbb{D}$.
Given a sequence $(z_n)$ in $\mathbb{D}$, one can consider the restriction operator
\begin{equation*}
  R: H^\infty \to \ell^\infty, \quad f \mapsto (f(z_n))_n.
\end{equation*}
The sequence $(z_n)$ is said to be interpolating for $H^\infty$ if $R$ is surjective.
Such sequences were characterized by Carleson \cite{Carleson58}; we will recall the characterization below.

Defining interpolating sequences for the Hardy space $H^2$ is slightly more subtle.
Functions in $H^2$ need not be bounded; instead, they obey the growth condition $(1 - |z|^2) |f(z)|^2  \le \|f\|^2$
for all $z \in \mathbb{D}$. Thus, it is natural to consider the weighted restriction operator
\begin{equation*}
  T: f \mapsto ( (1 - |z_n|^2)^{1/2} f(z_n))_n.
\end{equation*}
In contrast with the setting for $H^\infty$, it is not automatic that $T$ maps $H^2$ into $\ell^2$.
Thus, there are two possible notions of interpolating sequences for $H^2$, given by requiring that $T H^2 = \ell^2$,
or that $T H^2 \supset \ell^2$.
Shapiro and Shields \cite{SS61} showed that the two notions actually coincide, and are in turn
equivalent to being an interpolating sequence for $H^\infty$.

In this article, we study interpolating sequences for normalized complete Pick spaces.
Precise definitions will be given later; for now, we only remark that important examples of normalized complete Pick spaces are the Hardy space $H^2$
and the classical Dirichlet space $\mathcal{D}$ on the unit disc. Moreover, for each $a \in (0,1)$, the standard weighted Dirichlet space $\mathcal{D}_a$,
which is the reproducing kernel Hilbert space on $\mathbb{D}$ with kernel $k_w(z) = \frac{1}{(1 - z \overline{w})^a}$,
is a normalized complete Pick space; see \cite[Section 7]{AM02} or \cite[Section 4.2]{Hartz22a}.

Another frequently studied normalized complete Pick space is the Drury--Arveson space $H^2_d$, which is the reproducing
kernel Hilbert space on the Euclidean unit ball $\mathbb{B}_d$ in $\mathbb{C}^d$ with kernel $k_w(z) = \frac{1}{1 - \langle z,w \rangle }$.
Among other things, this space plays a key role in multivariable operator theory; see \cite{Arveson98,Hartz22a,Shalit13} for background.

Let $\mathcal{H}$ be a reproducing kernel Hilbert space of functions on a set $X$.
Let $k: X \times X \to \mathbb{C}$ be the reproducing kernel of $\mathcal{H}$. Thus,
\begin{equation*}
  \langle f, k(\cdot,w) \rangle = f(w) \quad \text{ for all } w \in X, f \in \mathcal{H}.
\end{equation*}
We will assume throughout that $k(z,z) \neq 0$ for all $z \in X$.
The Cauchy--Schwarz inequality shows that $|f(z)| \le \|f\| ~k(z,z)^{1/2}$ for all $f \in \mathcal{H}$
and $z \in X$. Given a sequence $(z_n)$ in $X$, we consider the weighted restriction operator
\begin{equation*}
  T: f \mapsto \Big( \frac{f(z_n)}{k(z_n,z_n)^{1/2}} \Big)_n.
\end{equation*}
A sequence $(z_n)$ in $X$ is said to be simply interpolating (SI) (or onto interpolating) if
\begin{equation*}
  T(\mathcal{H}) \supset \ell^2.
\end{equation*}

Simply interpolating sequences have been studied in \cite{Arcozzi16, Bishop94, Chalmoukis21, Chalmoukis23, MS94a}, see also \cite[Chapter 8]{ArcRochSawWick19}. Simply interpolating sequences are closely related with interpolationg sequences
for the multiplier algebra
as well as with separation and Carleson measure conditions.
We briefly recall the relevant notions.
The multiplier algebra of $\mathcal{H}$ is
\begin{equation*}
  \Mult(\mathcal{H}) = \{ \varphi: X \to \mathbb{C}: \varphi \cdot f \in \mathcal{H} \text{ for all } f \in \mathcal{H}\},
\end{equation*}
equipped with the multiplier norm $\|\varphi\|_{\Mult(\mathcal{H})} = \sup \{ \|\varphi \cdot f\|: \|f\|_{\mathcal{H}} \le 1 \}$.
Every multiplier is bounded, so we may consider the operator
\begin{equation*}
  R: \Mult(\mathcal{H}) \to \ell^\infty, \quad \varphi \mapsto (\varphi(z_n))_n.
\end{equation*}
Given $z,w \in X$, we let
\begin{equation*}
  d(z,w) = \sqrt{ 1 - \frac{|k(z,w)|^2}{k(z,z) k(w,w)}}.
\end{equation*}
Then $d$ is a pseudo-metric on $X$; see \cite[Lemma 9.9]{AM02} and also \cite{ARS+11} for background.
If $\mathcal{H} = H^2$, this is the pseudo-hyperbolic metric on the disc.

A sequence $(z_n)$ is said to
\begin{itemize}
  \item[(IS)] be an interpolating sequence (for $\Mult(\mathcal{H})$) if $R(\Mult(\mathcal{H})) = \ell^\infty$;
  \item[(C)] satisfy the Carleson measure condition if $T$ maps $\mathcal{H}$ boundedly into $\ell^2$, i.e.\ if there exists $C \ge 0$ with
    \begin{equation*}
      \sum_{n} \frac{|f(z_n)|^2}{k(z_n,z_n)} \le C \|f\|^2 \quad \text{ for all } f \in \mathcal{H};
    \end{equation*}
  \item[(US)] be uniformly separated if $\inf_{n} \prod_{j \neq n} d(z_j,z_n) > 0$.
  \item[(SS)] be strongly separated if there exists $\varepsilon > 0$ so that for each $n \in \mathbb{N}$,
    there exists $\varphi \in \Mult(\mathcal{H})$ of multiplier norm at most one with
    $\varphi(z_n) = \varepsilon$ and $\varphi(z_j) = 0$ for $j \neq n$.
  \item[(WS)] weakly separated if there exists $\varepsilon > 0$ such that $d(z_n,z_j) \ge \varepsilon$
    if $n \neq j$.
\end{itemize}
By the closed graph theorem, the Carleson measure condition is also equivalent to demanding that $T( \mathcal{H}) \subset \ell^2$.

The best understood complete Pick space in the Hardy space $H^2$. In that case,
the previously mentioned theorems of Carleson \cite{Carleson58} and Shapiro--Shields \cite{SS61} combine to show that
\begin{equation*}
  \text{(IS)} \Leftrightarrow \text{(C)} + \text{(SI)} \Leftrightarrow \text{(C)} + \text{(WS)}
  \Leftrightarrow \text{(US)} \Leftrightarrow \text{(SS)} \Leftrightarrow \text{(SI)}.
\end{equation*}
For a modern treatment; see \cite[Chapter 9]{AM02} or \cite[Chapter VII]{Garnett07}.

In the setting of all complete Pick spaces, the following implications are known:
\begin{equation*}
  \text{(IS)} \Leftrightarrow \text{(C)} + \text{(SI)} \Leftrightarrow \text{(C)} + \text{(WS)}
  \Rightarrow \text{(US)} \Rightarrow \text{(SS)} \Leftarrow \text{(SI)}.
\end{equation*}
The first equivalence is a theorem of Marshall and Sundberg \cite{MS94a}, see also \cite[Theorem 9.19]{AM02}. The second equivalence
was established by Aleman, \mcc, Richter and the third named author \cite{AHM+17}.
The implication (C) + (WS) $\Rightarrow$ (US) is due to Agler and \mcc\ \cite[Theorem 9.43]{AM02}.
The remaining implications can also be found in Chapter 9 of \cite{AM02};
see also Section \ref{sec:prelim} below for more explanation.

The goal of this article is to determine to what extent the remaining implications hold in general complete Pick spaces.
Besides the Hardy space, this is also known in the case of the Dirichlet space.
In that case, Bishop established the equivalence (SS) $\Leftrightarrow$ (SI) \cite[Theorem 1.2]{Bishop94} (see \cite{Chalmoukis21} for a geometric description of onto interpolating sequences).

Our first main result shows that the equivalence (SS) $\Leftrightarrow$ (SI) holds in any complete Pick space.
This answers a question raised by Agler and \mcc\ \cite[p.\ 145]{AM02}.

\begin{theorem}
  \label{thm:simply_interpolating_intro}
  Let $\mathcal{H}$ be a normalized complete Pick space of functions on a set $X$. A sequence in $X$ is simply interpolating if and only
  if it is strongly separated.
\end{theorem}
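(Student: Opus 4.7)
Since (SI) $\Rightarrow$ (SS) is recorded in the excerpt, I focus on (SS) $\Rightarrow$ (SI). Given SS with constant $\varepsilon > 0$, fix $\varphi_n \in \Mult(\mathcal{H})$ with $\|\varphi_n\|_{\Mult(\mathcal{H})} \le 1$ and $\varphi_n(z_m) = \varepsilon \delta_{nm}$. Set $\hat k_n = k_{z_n}/k(z_n,z_n)^{1/2}$ and $v_n := \varphi_n \hat k_n \in \mathcal{H}$; then $\|v_n\| \le 1$ and a direct computation using $\varphi_n(z_m) = \varepsilon \delta_{nm}$ yields $T v_n = \varepsilon e_n$ in $\ell^2$. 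Consequently, for any finitely supported $a \in \ell^2$, $f_a := \varepsilon^{-1} \sum_n a_n v_n \in \mathcal{H}$ satisfies $T f_a = a$, and the task is to extend this assignment from finitely supported sequences to all of $\ell^2$.

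A natural sufficient condition is the uniform lower Riesz bound $\tilde G_F \succeq c_0(\varepsilon) I$ for every finite $F \subset \N$, where $\tilde G_F = (\langle \hat k_j, \hat k_i\rangle)_{i,j \in F}$ is the normalized Gram matrix over $F$. Indeed, with such a bound in hand, the minimum-norm interpolant $f_N \in \mathcal{H}$ of the finite problem $T f|_{F_N} = a|_{F_N}$ on $F_N = \{z_1,\dots,z_N\}$ satisfies $\|f_N\|^2 \le c_0^{-1} \|a\|^2$ uniformly in $N$; a weakly convergent subsequence then yields an interpolant $f$ for $a$ on all of $\N$, because weak convergence in $\mathcal{H}$ implies pointwise convergence via point evaluations. (Alternatively, via the biorthogonality $\langle \hat k_m, v_n\rangle = \overline{\varepsilon}\delta_{nm}$ coming from $M_{\varphi_n}^* \hat k_m = \overline{\varphi_n(z_m)}\hat k_m$, the same lower Riesz bound implies that $\{v_n\}$ is a Bessel sequence, and then $f_a$ itself extends continuously to $\ell^2$.)

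The main obstacle is therefore the uniform lower Riesz bound $\tilde G_F \succeq c_0 I$. By the matrix-valued Pick theorem in a normalized complete Pick space, this is equivalent to the existence of a single row multiplier $\Psi = (\psi_n)_n \colon \mathcal{H} \otimes \ell^2 \to \mathcal{H}$ with $\psi_n(z_m) = \varepsilon \delta_{nm}$ and multiplier norm bounded by some $M(\varepsilon)$; in effect, one must upgrade the individual scalar witnesses $\varphi_n$ of SS to a single row multiplier with the same separating values. The individual witnesses supply only the scalar Pick conditions $\tilde G \succeq \varepsilon^2 E_{nn}$ for each $n$ separately, and for arbitrary positive semidefinite matrices these do not combine to $\tilde G \succeq c_0 I$, so the complete Pick hypothesis must enter in an essential way. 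To push this upgrade through, I would exploit the Drury--Arveson realization $k(z,w) = 1/(1 - \langle b(z), b(w)\rangle)$ to make $\tilde G$ concrete in the Drury--Arveson coordinates $b(z_n) \in \B_\infty$, and combine it with a paving argument of Marcus--Spielman--Srivastava (Kadison--Singer) type, suggested by the appearance of the Feichtinger conjecture in the paper's keywords. A subtle point is that Marcus--Spielman--Srivastava paving is normally formulated for Bessel systems, whereas $\{\hat k_n\}$ here is only uniformly minimal (the content of SS) and need not be Bessel (i.e., Carleson), so a version tailored to uniformly minimal but possibly non-Bessel sequences appears to be needed.
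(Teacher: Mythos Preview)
Your outline correctly identifies the overall architecture of the paper's proof and, crucially, correctly isolates the main obstacle: the Marcus--Spielman--Srivastava/Feichtinger machinery applies to Bessel sequences, whereas strong separation only gives uniform minimality of $(\hat k_n)$. However, your proposal stops precisely at this obstacle (``a version tailored to uniformly minimal but possibly non-Bessel sequences appears to be needed'') without resolving it, and this is where the real work lies.

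The paper overcomes this with a concrete trick that you are missing. Working on finite subsequences with Gramian $G$, one forms $H = (I+G^{-1})^{-1}$. This is automatically a positive contraction, hence the Gramian of a Bessel sequence $(h_n)$; and uniform minimality bounds the diagonal of $G^{-1}$ above, which (via the elementary inequality $(A^{-1})_{nn}\ge (A_{nn})^{-1}$ for positive $A$) bounds the diagonal of $H$ below. Now the quantitative Feichtinger theorem applies to $(h_n)$, partitioning it into $O(\varepsilon^{-2})$ subsequences with Gramians bounded below by $c\varepsilon^2$. Since $H\le G$, the same partition gives the lower Riesz bound for the corresponding subsequences of $(\hat k_n)$.

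There is also a second step you do not address: once the sequence has been paved into finitely many simply interpolating pieces, one must reassemble them into a single simply interpolating sequence. This is where the complete Pick property and the strong separation hypothesis are used together. Each piece yields a row multiplier via the Pick criterion (your Proposition~2.6 equivalence); concatenating the $r$ rows gives a row of norm $\sqrt{r}$, but with spurious nonzero values at the wrong points. One then multiplies on the right by the diagonal multiplier $D=\operatorname{diag}(\varphi_n)$ supplied by strong separation to kill the off-diagonal values, producing a single row multiplier $\Psi$ with $\Psi(z_n)=c\,e_n^T$, which is exactly the lower Riesz bound for the full Gramian. Without this reassembly argument, paving alone does not give (SI) for the original sequence.
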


As mentioned above, this result was known in the Hardy space and in the Dirichlet space.
However, our proof is entirely different from the known proofs in those spaces.
Indeed, the proofs in the Hardy space (see e.g.\ \cite{Carleson58} and \cite[Chapter 9]{AM02}) make use of Blaschke products, while Bishop's proof in the Dirichlet space heavily relies on potential theoretic arguments. Both of these techniques are not available in the generality of all complete Pick spaces.

Instead, we use the positive solution of the Kadison--Singer problem, or more precisely of the Feichtinger conjecture, due to Marcus, Spielman and Srivastava. This theorem was also used in the proof of (WS) + (C) $\Rightarrow$ (IS) in \cite{AHM+17}.
However, as we will explain later, in the absence of the Carleson measure condition,
the Marcus--Spielman--Srivastave theorem does not apply directly.
We overcome this by showing in Section \ref{sec:um_feichtinger}, as a consequence of the Marcus--Spielman--Srivasta theorem, that any uniformly
minimal sequence of unit vectors in a Hilbert space is a finite union of sequences that satisfy a lower
Riesz condition. In the setting of interpolating sequences, this means that any strongly separated sequence is a finite union
of simply interpolating sequences; this is even true without the complete Pick property.
In a separate step, we use the complete Pick property to pass from a finite union of simply interpolating sequences
to a single simply interpolating sequence; this is done in Section \ref{sec:si_ss}.

It is worth remarking that Theorem \ref{thm:simply_interpolating_intro}, or more precisely the implication (SS) $\Rightarrow$ (SI),
is in some sense a refinenemt of the implication (WS) + (C) $\Rightarrow$ (IS) established in \cite{AHM+17}.
Indeed, from (SS) $\Rightarrow$ (SI) and from the previously known results (WS) + (C) $\Rightarrow$ (SS) and (SI) + (C) $\Rightarrow$ (IS),
we immediately obtain (WS) + (C) $\Rightarrow$ (IS).
Thus, it is not an accident that our proof of (SS) $\Rightarrow$ (SI) also uses the Marcus--Spielman--Srivastava theorem.
On the other hand, there is now a second proof of the implication (WS) + (C) $\Rightarrow$ (IS) that does not rely on the Marcus--Spielman--Srivastava
theorem. This proof uses the column-row property of complete Pick spaces; see \cite{Hartz20}.
The question of whether the column-row property can also be used to prove (SS) $\Rightarrow$ (SI) remains open.

Having established the equivalence (SS) $\Leftrightarrow$ (SI), the natural remaining questions
are in which spaces the implications (SS) $\Rightarrow$ (US) and (US) $\Rightarrow$ (IS) hold.
As mentioned above, it is known that both of these implications fail in the Dirichlet space.
A fairly simple example shows that (SS) $\not \Rightarrow$ (US) in $H^2_\infty$,
the Drury--Arveson space on the infinite dimensional ball; see \cite[Example 9.55]{AM02}.
Our second main result shows that both implications also fail in the Drury-Arveson space in dimension at least two,
and in standard weighted Dirichlet spaces.
In this context, another relevant condition is the following: we say that a sequence $(z_n)$ generates a finite measure (FM)
if
\begin{equation*}
  \sum_{n} \frac{1}{k(z_n,z_n)} < \infty.
\end{equation*}
Note that in $H^2$, these are precisely the Blaschke sequences.
It is not hard to see that (US) $\Rightarrow$ (FM),
see Proposition \ref{prop:US_FM} below.
Our various counterexamples can then be summarized as follows.

\begin{theorem}
  \label{thm:counterexamples_intro}
  Let $\mathcal{H}$ be any of the spaces $H^2_d$ for $d \ge 2$, or $\mathcal{D}_a$ for $a \in (0,1)$.
  Then:
  \begin{enumerate}[label=\normalfont{(\alph*)}]
    \item There exists a sequence $(z_n)$ that is uniformly separated, but not interpolating.
    \item There exists a sequence $(z_n)$ that is strongly separated and that generates a finite measure, but that is not uniformly separated.
    \item There exists a sequence $(z_n)$ that is strongly separated, but that does not generate a finite measure, and hence is
      not uniformly separated.
  \end{enumerate}
\end{theorem}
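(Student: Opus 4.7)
The plan is to translate each of (a), (b), (c) into a combination of the basic conditions (C), (US), (FM), (SS), and then to construct sequences in the stated spaces achieving each combination. By Theorem~\ref{thm:simply_interpolating_intro} and the equivalence (IS) $\Leftrightarrow$ (C) + (SI), part (a) reduces to producing a sequence that is (US) but not (C); since (US) implies (SS), (SI), and (FM), such a sequence is uniformly separated yet not interpolating. Part (b) reduces to (SS) + (FM) + $\neg$(US), and part (c) reduces to (SS) + $\neg$(FM), whereupon $\neg$(US) is automatic by Proposition~\ref{prop:US_FM}. In every case the essential work is the verification of (SS); Theorem~\ref{thm:simply_interpolating_intro} allows one to replace this by verifying (SI), i.e.\ the solvability in $\mathcal{H}$ of every $\ell^2$ interpolation problem for the weighted values $f(z_n)/k(z_n,z_n)^{1/2}$.

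For the Drury--Arveson space $H^2_d$ with $d \geq 2$, the idea is to exploit the presence of several independent complex directions. I would arrange the sequence into clusters of points lying in distinct lower-dimensional slices of $\B_d$, so that the pairwise pseudo-hyperbolic distances within a cluster are small (and hence the product in (US) vanishes along a subsequence), while the clusters themselves can be separated by multipliers built from coordinate polynomials in the complementary directions; this is in the spirit of the infinite-dimensional example \cite[Example 9.55]{AM02}, but adapted to the finite-dimensional setting. Tuning the positions and the cluster sizes then yields each of the three desired configurations: enough points in a single Carleson window to defeat (C) in (a); slow accumulation keeping $\sum 1/k(z_n,z_n) < \infty$ in (b); fast accumulation destroying (FM) in (c).

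For the weighted Dirichlet spaces $\mathcal{D}_a$, $a \in (0,1)$, only one complex variable is available, so the construction must rely on the weight structure. I would place $N_k$ uniformly spaced points on circles $|z|=r_k$ with $r_k \uparrow 1$, choosing $N_k$ so that neighbouring points on the same circle are arbitrarily close in the pseudo-hyperbolic metric (destroying (US) along a subsequence), and controlling $\sum N_k (1-r_k^2)^a$ either to converge (for (b)) or diverge (for (c)); for (a) one instead uses uniformly separated points packed into a single Carleson window in order to defeat (C). The verification of (SS) is carried out by the (SI) route of Theorem~\ref{thm:simply_interpolating_intro}: one writes a candidate interpolant as $\sum_n c_n k(\cdot,z_n)$ whose coefficients solve the interpolation data, and estimates its $\mathcal{D}_a$-norm using the decay of $k(z,w)$ when $z,w$ sit on well-separated circles.

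The main obstacle is the verification of (SS)/(SI) precisely in the regime where (US) fails, since there the kernel cross-terms between nearby points are largest and the interpolation problem most constrained. In $H^2_d$ this amounts to bounding the multiplier norm of a peaking function that is close to one on one cluster point and vanishes at many nearby ones; in $\mathcal{D}_a$ it amounts to bounding the $\mathcal{D}_a$-norm of a linear combination of many kernel functions whose centres accumulate on the same circle. A further balancing act arises in (b), where (FM) must be preserved while the cluster sizes are pushed large enough to kill (US); the parameters $N_k$ and $r_k$ must be tuned so that these two requirements do not collide, and the same tuning must remain compatible with the (SS) estimates above.
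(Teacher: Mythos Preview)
Your proposal is a plan rather than a proof, and at the crucial step it contains a genuine error. For parts (b) and (c) in $\mathcal{D}_a$ you propose to place $N_k$ points on circles $|z|=r_k$ with ``neighbouring points on the same circle arbitrarily close in the pseudo-hyperbolic metric'' in order to destroy (US). But that destroys (WS), and hence (SS), as well: a two-point Pick argument shows that any $\varepsilon$-strongly separated sequence is $\varepsilon$-weakly separated. So the very mechanism you invoke to kill (US) also kills the condition you are trying to preserve. What actually has to happen is that the points remain weakly separated on each circle (so the individual factors $d(z_j,z_n)$ stay bounded away from $0$), while the \emph{product} $\prod_{j\neq n} d(z_j,z_n)$ nonetheless tends to $0$ because there are more and more factors; and one must simultaneously keep the within-circle Gramian bounded below. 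Your sketch of the (SI) verification (``decay of $k(z,w)$ when $z,w$ sit on well-separated circles'') addresses only the easy off-diagonal blocks and says nothing about the hard diagonal blocks where the points are on the same circle.

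The paper's constructions are quite different from what you outline. For (c) in $\mathcal{D}_a$ it does not use circles of points at all: it takes the orbit of a point under a suitable Fuchsian group and shows (Proposition~\ref{prop:Fuchsian_zero}) that for such orbits, strong separation is \emph{equivalent} to being a zero set; deep results on Fuchsian groups of the second kind then supply an orbit that is a Dirichlet zero set yet has $\sum (1-|z|)^a=\infty$. For (b) it either computes the eigenvalues of the circulant Gramian of equidistributed points on a single circle explicitly (Example~\ref{example:circulant}), or truncates the infinite-measure sequence from (c) and reassembles the truncations by automorphisms (Lemma~\ref{lem:assembly}). For (a) in $\mathcal{D}_a$ the point is not ``packing a Carleson window'' but a quantitative comparison (Lemma~\ref{lem:equidistributed}): with the right choice of radius, the column $\ell^2$-norms of the Gramian of $N$ equidistributed points stay bounded (so (US) holds via Proposition~\ref{prop:uniformly_separated}) while $\|G_N\|\gtrsim N(1-r)^a\to\infty$ (so (C) fails). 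Finally, for $H^2_d$ the paper does not use clusters in complementary slices; it lifts the $\mathcal{D}_{1/2}$ examples to $H^2_2$ through the map $r(z_1,z_2)=2z_1z_2$ (Lemma~\ref{lem:embedding}, Proposition~\ref{prop:ball_counterexamples}) and then embeds $\mathbb{B}_2\hookrightarrow\mathbb{B}_d$. Your slice idea is natural in $H^2_\infty$ (that is Example~9.55 of \cite{AM02}) but in finite dimensions there are only finitely many ``complementary directions'', and you give no indication of how to produce the required infinite family of separating multipliers there.
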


In summary, the valid implications in any complete Pick space are
\begin{equation*}
  \text{(IS)} \Leftrightarrow \text{(C)} + \text{(SI)} \Leftrightarrow \text{(C)} + \text{(WS)}
  \Rightarrow \text{(US)} \Rightarrow \text{(SS) + (FM)} \Rightarrow \text{(SS)} \Leftrightarrow \text{(SI)}.
\end{equation*}
Moreover, none of the three implications are reversible in $H^2_d$ for $d \ge 2$ or in $\mathcal{D}_\alpha$ for $\alpha \in (0,1)$.
One should note that, for the Dirichlet space $\mathcal{D}$, the failure of these implications is known. Marshall and Sundberg constructed in \cite{MS94a} a sequence that is uniformly separated and that does not satisfy the Carleson measure condition. Arcozzi, Rochberg and Sawyer constructed in \cite{Arcozzi16} the remaining two counterexamples for $\mathcal{D}$, see also \cite[Section 8.4.4]{ArcRochSawWick19}. (The second example in \cite[Section 8.4.4]{ArcRochSawWick19} is a finite measure simply interpolating sequence that does not satisfy the ``simple condition'' discussed in \cite{ArcRochSawWick19}; the construction yields that it is not uniformly separated.) Another example of a finite measure simply interpolating sequence that is not uniformly separated is the final example in \cite{Chalmoukis23}. \\

The remainder of the paper is organized as follows. Section \ref{sec:prelim} contains a brief overview of the background results on interpolating sequences and complete Pick kernels which are necessary for the proof of our main results.\\
Section \ref{sec:um_feichtinger} uses an argument based on the positive solution of the Feichtinger conjecture to show that any uniformly minimal sequence of unit vectors in a Hilbert space is a finite union of sequences that satisfy a lower Riesz condition. As a consequence, any strongly separated sequence is the union of finitely many simply interpolating sequences, independently of the complete Pick property.\\
Section \ref{sec:si_ss} will then show that strong separation implies simple interpolation for the whole sequence, provided that the underlying reproducing kernel Hilbert space enjoys the complete Pick property. This proves Theorem \ref{thm:simply_interpolating_intro}.\\
Section \ref{sec:us_not_is} is devoted to the construction of uniformly separated sequences that are not interpolating in $\mathcal{D}_a$, $0<a<1$, proving Theorem \ref{thm:counterexamples_intro} (a) for such spaces. \\
In Section \ref{sec:si_im} we prove Theorem \ref{thm:counterexamples_intro} (c) for the spaces $\mathcal{D}_a$, $0<a<1$, by invoking general results on separation for orbits of discrete subgroups of automorphisms of the unit disc. For the range $0<a\leq1/2$, we use some of the tools from Section \ref{sec:us_not_is} to give a proof of Theorem \ref{thm:counterexamples_intro} (b), see Example \ref{example:circulant}. \\
Finally, in Section \ref{sec:ball}, we lift the counterexamples considered in Section \ref{sec:us_not_is} and Section \ref{sec:si_im} from $\mathcal{D}_\frac{1}{2}$ to the Drury-Arveson space on the $2$-dimensional unit ball, and prove Theorem \ref{thm:counterexamples_intro} (b).

\section{Preliminaries}
\label{sec:prelim}

\subsection{Complete Pick spaces}

For a detailed account on complete Pick spaces, the reader is referred to the book
\cite{AM02}. A brief introduction can also be found in \cite[Section 4]{Hartz22a}. We recall the main points that we need.

Let $\mathcal{H}$ be a reproducing kernel Hilbert space of functions on a set $X$,
with reproducing kernel $k$ and multiplier algebra $\Mult(\mathcal{H})$.
We will always assume that $k(z,z) \neq 0$ for all $z \in X$.
The space $\mathcal{H}$ is said to be a Pick space if, for all $n \in \mathbb{N}$,
all $z_1,\ldots,z_n \in $ and $\lambda_1,\ldots,\lambda_n \in \mathbb{C}$
for which the Pick matrix
\begin{equation*}
  \big[ (1 - \lambda_j \overline{\lambda_i}) k(z_j,z_i) \big]_{j,i=1}^n
\end{equation*}
is positive, there exists a multiplier $\varphi \in \Mult(\mathcal{H})$ with
\begin{equation*}
  \varphi(z_j) = \lambda_j \quad (j= 1,2,\ldots, n)
\end{equation*}
and $\|\varphi\|_{\Mult(\mathcal{H})} \le 1$.
If the analogous condition holds for matrix valued interpolation (with $\overline{\lambda_j}$ replaced with
the adjoint  $\lambda_j^*$), then $\mathcal{H}$ is said to be a complete Pick space.
A routine compactness argument shows that if $\mathcal{H}$ is a (complete) Pick space,
then one can solve (matrix or operator valued) interpolation problems with infinitely many nodes,
provided all Pick matrices corresponding to finitely many nodes are positive.

The kernel $k$ is said to be normalized at $z_0 \in X$ if $k(z,z_0) = 1$ for all $z \in X$.
A complete Pick space with a normalized kernel is also called a normalized complete Pick space.
According to results of Quiggin \cite{Quiggin93}, McCullough \cite{McCullough92}, Agler and \mcc\ \cite{AM00}, a reproducing kernel Hilbert
space with kernel normalized at $z_0 \in X$
is a complete Pick space if and only if there exists a map $b$ from $X$ into the unit ball of a Hilbert space
such that $b(z_0) = 0$ and such that
\begin{equation}
  \label{eqn:CNP}
  k(z,w) = \frac{1}{1 - \langle b(z), b(w) \rangle} \quad (z,w \in X);
\end{equation}
see for instance \cite[Theorem 7.31]{AM02}.
The prototypical example of a complete Pick space is the Hardy space $H^2$,
but there are many other examples, including
the classical Dirichlet space $\mathcal{D}$ \cite{Agler88a}, the standard weighted Dirichlet space $\mathcal{D}_a$
for $0 < a < 1$, and the Drury--Arveson space $H^2_d$; see e.g.\ \cite[Chapter 7]{AM02} and \cite[Section 4]{Hartz22a}.

\subsection{Gramians}

Let $\mathcal{H}$ be a reproducing kernel Hilbert space on $X$, with kernel $k$.
Given a set $Z = \{ z_n: n \in \mathbb{N}\}$ of points in $X$,
let $f_n = \frac{k(\cdot,z_n)}{\|k(\cdot,z_n)\|}$ be the normalized reproducing kernel
at $z_n$ and let
\begin{equation*}
  G = \big[ \langle f_i,f_j \rangle \big]_{i,j} =  \Big[ \frac{k(z_i,z_j)}{ k(z_i,z_i)^{1/2} k(z_j,z_j)^{1/2}} \Big]_{i,j}
\end{equation*}
be the Gramian.
It is now well known that the various interpolation, Carleson measure and separation conditions
discussed in the introduction can be rephrased as conditions about the Gramian; see \cite[Chapter 9]{AM02}.

We recall the facts we need and provide some comments about proofs for the convenience of the reader.
Throughout, let $\mathcal{H}$ be a reproducing kernel Hilbert space on $X$, let $(z_n)$ be a sequence
  in $X$ and let $G$ be the Gramian.
We also let
\begin{equation*}
  T: f \mapsto \Big( \frac{f(z_n)}{K(z_n,z_n)^{1/2}} \Big)_n.
\end{equation*}
Recall that by definition, $(z_n)$ satisfies the Carleson measure condition if $T$ is a bounded operator
from $\mathcal{H}$ into $\ell^2$.
The following result was already used by Shapiro and Shields in the context of interpolating sequences;
see \cite[Theorem 3]{SS61}.

\begin{proposition}
  \label{prop:BG}
  The sequence $(z_n)$ satisfies the Carleson measure condition if and only if $G$ is bounded
  as an operator on $\ell^2$. In this case,
  \begin{equation*}
    \|G\| =
  \sup \Big\{ \sum_{n} \frac{|f(z_n)|^2}{k(z_n,z_n)}: \|f\|_{\mathcal{H}} \le 1 \Big\}.
  \end{equation*}
\end{proposition}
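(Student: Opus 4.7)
The plan is to identify $T$ with the analysis operator associated to the sequence of normalized reproducing kernels $f_n = k(\cdot,z_n)/k(z_n,z_n)^{1/2}$. The reproducing property gives $\langle f,f_n\rangle = f(z_n)/k(z_n,z_n)^{1/2}$, so that $Tf = (\langle f,f_n\rangle)_n$ for every $f\in\mathcal H$. The statement then reduces to the standard operator-theoretic fact that a Hilbert space operator $T$ is bounded iff $TT^*$ is bounded, together with the identity $\|T\|^2 = \|TT^*\| = \|G\|$, once one observes that $G$ is precisely the matrix of $TT^*$ (up to Hermitian symmetry) in the standard basis of $\ell^2$.

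To avoid circularity, I would set this up through the synthesis operator rather than $T$ directly, since a priori $T$ need not map into $\ell^2$. Define $S$ on finitely supported sequences $c\in\ell^2$ by $Sc = \sum_n c_n f_n \in \mathcal H$. Expanding the norm gives
\[
\|Sc\|_{\mathcal H}^2 \;=\; \sum_{i,j} c_j\,\overline{c_i}\,\langle f_j,f_i\rangle \;=\; \langle Gc,c\rangle_{\ell^2},
\]
so $S$ extends to a bounded operator $\ell^2 \to \mathcal H$ if and only if $G$ is a bounded (positive) operator on $\ell^2$, with $\|S\|^2 = \|G\|$. For the forward direction, assume the Carleson measure condition holds; the closed graph theorem argument flagged in the excerpt upgrades the inclusion $T\mathcal H \subseteq \ell^2$ to boundedness of $T\colon \mathcal H\to\ell^2$. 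A direct pairing $\langle Tf,c\rangle_{\ell^2} = \langle f,Sc\rangle_{\mathcal H}$ for finitely supported $c$ then identifies $S$ with the Hilbert space adjoint $T^*$, so $\|G\| = \|S\|^2 = \|T^*\|^2 = \|T\|^2 < \infty$. Conversely, if $G$ is bounded, then $S$ extends to a bounded operator on $\ell^2$, and its Hilbert space adjoint $S^*\colon \mathcal H \to \ell^2$ satisfies $(S^* f)_n = \langle f, f_n\rangle = f(z_n)/k(z_n,z_n)^{1/2}$, so $S^* = T$ everywhere on $\mathcal H$; in particular $T$ is bounded with $\|T\|^2 = \|G\|$.

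The norm identity is then immediate from $|f(z_n)|^2/k(z_n,z_n) = |\langle f,f_n\rangle|^2$ and the definition of the operator norm:
\[
\|T\|^2 \;=\; \sup_{\|f\|\le 1}\|Tf\|_{\ell^2}^2 \;=\; \sup_{\|f\|\le 1} \sum_n \frac{|f(z_n)|^2}{k(z_n,z_n)},
\]
which combined with $\|T\|^2 = \|G\|$ yields the stated formula. There is no serious obstacle here; the only mild subtlety is the initial asymmetry that the Carleson condition is phrased as set-theoretic containment while boundedness of $G$ is an a priori quantitative statement, and this is handled cleanly by passing through the synthesis operator $S$, whose boundedness is manifestly equivalent to boundedness of the quadratic form $\langle G\,\cdot\,,\,\cdot\,\rangle$.
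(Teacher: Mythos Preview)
Your proposal is correct and follows essentially the same approach as the paper: identify $T^* e_n = f_n$ (your synthesis operator $S$), observe that $G = T T^*$, and conclude $\|G\| = \|T\|^2$. The paper's proof is terser and does not explicitly separate the two directions or worry about the a priori domain of $T$, but the underlying argument is the same.
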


\begin{proof}
  This is proved \cite[Theorem 3]{SS61} and in \cite[Proposition 9.5]{AM02}. To see the formula for the norm,
  note that the adjoint $T^*: \ell^2 \to \mathcal{H}$ maps $e_n$ to $f_n$, and so $G = T T^*$.
  Thus,  $\|G\| = \|T\|^2$, which in turn equals the supremum in the statement.
\end{proof}

Next, we phrase the uniform separation condition in terms of the Gramian.
This is essentially contained in \cite[Theorem 9.43]{AM02}.
We call $\inf_n \prod_{j \neq n} d(z_j,z_n)$ the \emph{uniform separation constant} of the sequence.
Similarly, $\inf_{n \neq j} d(z_n,z_j)$ is called the \emph{weak separation constant} of the sequence.
Finally, note that $G$ defines a bounded operator from $\ell^2$ to $\ell^\infty$ if and only if
  \begin{equation*}
    \sup_{n} \sum_{j} \frac{|k(z_n,z_j)|^2}{k(z_n,z_n) k(z_j,z_j)} < \infty.
  \end{equation*}

\begin{proposition}
  \label{prop:uniformly_separated}
  The sequence $(z_n)$ is uniformly separated if and only if it is weakly separated
  and $G$ defines a bounded operator from $\ell^2$ to $\ell^\infty$.
  Moreover, the uniform separation constant can be bounded below only in terms
  of the weak separation constant and $\|G\|_{\ell^2 \to \ell^\infty}$.
\end{proposition}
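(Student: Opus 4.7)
The plan is to rewrite everything in terms of the Gramian entries $G_{nj} = k(z_n,z_j)/(k(z_n,z_n)^{1/2} k(z_j,z_j)^{1/2})$, which by definition of $d$ satisfy $|G_{nj}|^2 = 1 - d(z_n,z_j)^2$. The row-norm formula $\|G\|_{\ell^2 \to \ell^\infty}^2 = \sup_n \sum_j |G_{nj}|^2$ follows from Cauchy--Schwarz (with equality achieved by an appropriate choice of input), and this matches the condition already noted just above the statement. From this reformulation the proposition reduces to comparing the product $\prod_{j\neq n} d(z_j,z_n)^2$ with the row sums $\sum_{j\neq n}|G_{nj}|^2$, which is a purely elementary task about the function $-\log(1-x)$.

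For the implication (US) $\Rightarrow$ (WS) plus boundedness of $G$, weak separation is immediate: each factor in the product defining uniform separation is at most $1$, so for $j \neq n$ one has $d(z_j,z_n) \geq \prod_{i\neq n} d(z_i,z_n) \geq \varepsilon$. For the row sum, use the elementary inequality $x \leq -\log(1-x)$ valid on $[0,1)$ to obtain
\begin{equation*}
\sum_{j \neq n} |G_{nj}|^2 \leq -\sum_{j \neq n} \log\bigl(1 - |G_{nj}|^2\bigr) = -2 \log \prod_{j \neq n} d(z_j,z_n) \leq -2\log \varepsilon,
\end{equation*}
and then add the diagonal term $|G_{nn}|^2 = 1$ to bound $\|G\|_{\ell^2 \to \ell^\infty}^2$ by $1 - 2\log \varepsilon$.

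For the converse (WS) plus $G$ bounded $\Rightarrow$ (US), weak separation with constant $\varepsilon$ gives $|G_{nj}|^2 \leq 1-\varepsilon^2$ for all $j \neq n$. Since $x \mapsto -\log(1-x)/x$ is bounded on $[0, 1-\varepsilon^2]$ by some constant $C_\varepsilon$ depending only on $\varepsilon$, I get
\begin{equation*}
- \log \prod_{j \neq n} d(z_j,z_n)^2 = \sum_{j \neq n} -\log\bigl(1 - |G_{nj}|^2\bigr) \leq C_\varepsilon \sum_{j \neq n} |G_{nj}|^2 \leq C_\varepsilon \|G\|_{\ell^2 \to \ell^\infty}^2.
\end{equation*}
Exponentiating produces a lower bound on $\prod_{j \neq n} d(z_j,z_n)$ that depends only on $\varepsilon$ and $\|G\|_{\ell^2 \to \ell^\infty}$ and is uniform in $n$, which is exactly uniform separation with the quantitative control claimed.

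No step here looks like an obstacle: the proof is essentially the two elementary inequalities $x \leq -\log(1-x)$ and $-\log(1-x) \leq C_\varepsilon x$ on $[0,1-\varepsilon^2]$, combined with the row-norm formula for $\|G\|_{\ell^2 \to \ell^\infty}$. The only small bookkeeping point is remembering to treat the diagonal separately, since $d(z_n,z_n) = 0$ forces us to exclude $j = n$ from the product while including it in the row sum of $G$.
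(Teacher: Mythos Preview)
Your proof is correct and follows essentially the same approach as the paper: both reduce the claim to the comparability of $-\log \prod_{j\neq n} d(z_j,z_n)^2 = \sum_{j\neq n} -\log(1-|G_{nj}|^2)$ with $\sum_{j\neq n}|G_{nj}|^2$, using that $-\log(1-x)\simeq x$ when $x$ is bounded away from $1$ (which weak separation guarantees). You spell out the two one-sided inequalities and the diagonal bookkeeping more explicitly, but the argument is the same.
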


\begin{proof}
  Clearly, weak separation is necessary for uniform separation. On the other hand, if $(z_n)$ is weakly separated then the individual terms in the infinite product
  \begin{equation*}
    \prod_{j \neq n} d(z_j,z_n)
  \end{equation*}
  are uniformly bounded below, hence the logarithm of the infinite product is comparable
  to
  \begin{equation*}
    - \sum_{j \neq n} (1 - d(z_j,z_n)^2) = - \sum_{j} \frac{|k(z_n,z_j)|^2}{k(z_n,z_n) k(z_j,z_j)},
  \end{equation*}
  from which all claims follow.
\end{proof}

Note that the implication (WS) + (C) $\Rightarrow$ (US), which was mentioned
in the introduction, is immediate from Propositions \ref{prop:BG} and \ref{prop:uniformly_separated}.
For Pick spaces, the implication (US) $\Rightarrow$ (SS) can be seen by taking
an infinite product of solutions of two point Pick problems; see \cite[Theorem 9.43]{AM02}.

The fact that (US) $\Rightarrow$ (FM) for complete Pick spaces follows from the following simple fact.

\begin{proposition}
  \label{prop:US_FM}
  Let $\mathcal{H}$ be a normalized complete Pick space.
  Suppose that one column of $G$ belongs to $\ell^2$. Then $(z_n)$ generates a finite measure.
\end{proposition}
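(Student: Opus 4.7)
The plan is to exploit the characterization of normalized complete Pick kernels to obtain a universal lower bound on $|k(z,w)|$, which immediately converts the hypothesis on one column of $G$ into a summability statement for $1/k(z_n,z_n)$.

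First I would invoke the factorization \eqref{eqn:CNP}: since $\mathcal{H}$ is a normalized complete Pick space, there exists a map $b$ from $X$ into the open unit ball of some auxiliary Hilbert space such that
\begin{equation*}
  k(z,w) = \frac{1}{1 - \langle b(z), b(w) \rangle} \qquad (z, w \in X).
\end{equation*}
Since $\|b(z)\| < 1$ for all $z \in X$, Cauchy--Schwarz gives $|\langle b(z), b(w) \rangle| \le \|b(z)\|\,\|b(w)\| < 1$, and therefore
\begin{equation*}
  |1 - \langle b(z), b(w) \rangle| \le 1 + |\langle b(z), b(w) \rangle| < 2.
\end{equation*}
Inverting this yields the key universal bound $|k(z,w)| > 1/2$ for all $z, w \in X$.

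Next I would rewrite the hypothesis. Saying that the $m$-th column of $G$ belongs to $\ell^2$ means
\begin{equation*}
  \sum_n \frac{|k(z_n, z_m)|^2}{k(z_n,z_n)\, k(z_m,z_m)} < \infty.
\end{equation*}
Multiplying through by $k(z_m,z_m)$ and using the lower bound $|k(z_n, z_m)|^2 \ge 1/4$ gives
\begin{equation*}
  \frac{1}{4}\sum_n \frac{1}{k(z_n,z_n)} \le \sum_n \frac{|k(z_n,z_m)|^2}{k(z_n,z_n)} = k(z_m,z_m) \sum_n |G_{nm}|^2 < \infty,
\end{equation*}
so $(z_n)$ generates a finite measure.

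There is no real obstacle here; the only point worth highlighting is that the universal estimate $|k(z,w)| \ge 1/2$ is a genuine feature of normalized complete Pick kernels (it uses both the denominator structure from \eqref{eqn:CNP} and the condition $b(z_0) = 0$ that forces $b$ to land in the open unit ball), and that once this is in hand the proposition reduces to a one-line comparison.
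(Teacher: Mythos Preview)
Your proof is correct and is essentially identical to the paper's own argument: both use the form \eqref{eqn:CNP} of a normalized complete Pick kernel to obtain the universal lower bound $|k(z,w)| \ge 1/2$, and then compare the $\ell^2$ column sum with $\sum_j 1/k(z_j,z_j)$. The only difference is that you spell out the Cauchy--Schwarz step explicitly, whereas the paper simply asserts the bound.
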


\begin{proof}
  The particular form of a normalized complete Pick kernel $k$ given by Equation \eqref{eqn:CNP}
  shows that $|k(z,w)| \ge \frac{1}{2}$. So if the $n$-th column of $G$ belongs to $\ell^2$, then
  \begin{equation*}
    \sum_{j} \frac{|k(z_n,z_j)|^2}{k(z_n,z_n) k(z_j,z_j)} 
    \ge \frac{1}{4 k(z_n,z_n)} \sum_{j} \frac{1}{k(z_j,z_j)}. \qedhere
  \end{equation*}
\end{proof}

Next, we reformulate the simple interpolation condition.
We say that $G = (G_{ij})$ is bounded below if there exists $\varepsilon > 0$
such that $(G_{ij})_{i,j=1}^N \ge \varepsilon I_N$ for all $N \in \mathbb{N}$.
Recall further that a sequence $(f_n)$ of unit vectors
satisfies a lower Riesz condition if there exists $\varepsilon > 0$ such that
\begin{equation*}
  \Big\| \sum_{n} \alpha_n f_n \Big\|^2 \ge \varepsilon \sum_{n} |\alpha_n|^2
\end{equation*}
for all finitely supported sequences $(\alpha_n)$.
We also use $\bigvee$ to denote closed linear spans in Hilbert spaces.

The following result was also used by Shapiro and Shields \cite[Theorem 3]{SS61},
see also \cite[Proposition 9.18]{AM02}.
We sketch a slight variation of the proofs presented there.

\begin{proposition}
  \label{prop:SI_Gramian}
  The following are equivalent:
  \begin{enumerate}[label=\normalfont{(\roman*)}]
    \item $(z_n)$ is a simply interpolating sequence;
    \item the normalized reproducing kernels at the points $(z_n)$ satisfy a lower Riesz condition;
    \item $G$ is bounded below.
  \end{enumerate}
\end{proposition}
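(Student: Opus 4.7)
The plan is to establish the equivalences along the chain (i) $\Rightarrow$ (ii) $\Leftrightarrow$ (iii) $\Rightarrow$ (i). The equivalence of (ii) and (iii) is a direct unpacking of definitions: for any finitely supported $\alpha = (\alpha_n)$ supported in $\{1, \ldots, N\}$, the identity $\|\sum_n \alpha_n f_n\|^2 = \sum_{i,j} \alpha_j \overline{\alpha_i} \langle f_j, f_i\rangle = \langle G_N \alpha, \alpha\rangle$ translates the lower Riesz inequality into the positivity bound $G_N \geq \varepsilon I_N$ for every $N$, which is exactly the definition of $G$ being bounded below.

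For (iii) $\Rightarrow$ (i), given $a = (a_n) \in \ell^2$ I would solve, for each $N$, the finite linear system $G_N \alpha^{(N)} = (a_1, \ldots, a_N)$ in $\mathbb{C}^N$, which is uniquely solvable since $G_N \geq \varepsilon I_N$. Setting $f_N = \sum_{n \leq N} \alpha^{(N)}_n f_n \in \mathcal{H}$, the Gramian identity together with Cauchy--Schwarz yields $\|f_N\|^2 = \langle G_N \alpha^{(N)}, \alpha^{(N)}\rangle = \langle (a_1,\ldots,a_N), \alpha^{(N)}\rangle \leq \|a\| \cdot \|\alpha^{(N)}\| \leq \varepsilon^{-1} \|a\|^2$, so $(f_N)$ is norm-bounded in $\mathcal{H}$. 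Passing to a weakly convergent subsequence $f_{N_k} \rightharpoonup f$, for each fixed $n$ the construction gives $\langle f_{N_k}, f_n\rangle = a_n$ once $N_k \geq n$; hence $\langle f, f_n\rangle = a_n$ in the limit. This is precisely $(Tf)_n = a_n$ for all $n$, so $Tf = a$ and $(z_n)$ is simply interpolating.

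The main obstacle is the implication (i) $\Rightarrow$ (ii), since $T$ need not be bounded on all of $\mathcal{H}$ and the classical open mapping theorem does not apply directly. The plan is to observe that $T$, with natural domain $\dom(T) = \{f \in \mathcal{H} : Tf \in \ell^2\}$, is a closed operator: $\mathcal{H}$-convergence forces pointwise convergence at each $z_n$, while $\ell^2$-convergence forces coordinate-wise convergence, so the two limits must agree. Endowing $\dom(T)$ with the graph norm makes it a Hilbert space on which $T$ is a bounded surjection, and the open mapping theorem produces a bounded right inverse $R : \ell^2 \to \mathcal{H}$ with $TR = I_{\ell^2}$. The identity $\delta_{nm} = (TRe_n)_m = \langle Re_n, f_m\rangle$ shows that $(Re_n)$ is biorthogonal to $(f_n)$, and for finitely supported $\alpha$ with $g = \sum_n \alpha_n f_n$ and arbitrary $\beta \in \ell^2$ one computes $\langle g, R\beta\rangle = \sum_n \alpha_n \overline{\beta_n} = \langle \alpha, \beta\rangle_{\ell^2}$. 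By duality this yields $\|\alpha\|_{\ell^2} \leq \|R\| \cdot \|g\|$, i.e., the lower Riesz condition (ii) with constant $\|R\|^{-2}$.
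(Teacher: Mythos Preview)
Your argument is correct. The equivalence (ii) $\Leftrightarrow$ (iii) and the implication (i) $\Rightarrow$ (ii) are handled essentially as in the paper: both rely on the closedness of $T$ on its natural domain, the open mapping theorem applied after passing to the graph norm, and a duality computation (the paper phrases this last step as ``$(T^{-1})^*$ maps $f_n$ to $e_n$'', which is your biorthogonality relation $\langle Re_n, f_m\rangle = \delta_{nm}$).

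The one genuine difference is in (iii) $\Rightarrow$ (i). You solve the truncated systems $G_N \alpha^{(N)} = a\big|_N$, obtain the uniform bound $\|f_N\|^2 \le \varepsilon^{-1}\|a\|^2$, and extract a weak limit. The paper instead argues directly from (ii): the lower Riesz condition makes the map $\sum \alpha_n f_n \mapsto (\alpha_n)$ bounded, hence extends to a bounded $S:\mathcal{H}\to\ell^2$ with $Sf_n = e_n$, and then one checks in one line that $TS^* = I_{\ell^2}$. The paper's route is shorter and avoids the compactness step; your route has the virtue of being more self-contained and of producing the interpolating function as a concrete limit of finite combinations of kernel vectors. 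One small cosmetic point: with the convention $G_{ij} = \langle f_i, f_j\rangle$, the system you actually want is $\overline{G_N}\,\alpha^{(N)} = a\big|_N$ (equivalently $G_N\overline{\alpha^{(N)}} = \overline{a}\big|_N$) so that $\langle f_N, f_n\rangle = a_n$; since $\overline{G_N}$ is also bounded below by $\varepsilon$, this does not affect the argument.
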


\begin{proof}
  The equivalence of (ii) and (iii) is immediate from the definitions.

  (i) $\Rightarrow$ (ii)
  Let $f_n$ be the normalized reproducing kernel at $z_n$.
  The weighted restriction operator $T$ introduced at beginning of the subsection can
  also be described by
  \begin{equation*}
    T: f \mapsto (\langle f,f_n \rangle).
  \end{equation*}
  Let $\dom(T) = \{f \in \mathcal{H}: T f \in \ell^2\}$. Note that $\ker(T) = \{f \in \mathcal{H}: f(z_n) = 0 \text{ for all } n \in \mathbb{N}\}$, and $\ker(T)^{\bot} = \bigvee_n f_n$.
  Now, if $(z_n)$ is simply interpolating, then
  \begin{equation*}
    T: \dom(T) \cap \ker(T)^\bot \to \ell^2
  \end{equation*}
  is a closed bijective operator. Endowing $\dom(T) \cap \ker(T)^\bot$ with the graph norm for a moment,
  the open mapping theorem yields a bounded inverse $T^{-1}: \ell^2 \to \ker(T)^\bot$.
  Then $T^{-1}$ is in particular bounded with respect to the norm of $\mathcal{H}$.
  A small computation shows that $(T^{-1})^*$ maps $f_n$ to $e_n$.
  In particular, $(f_n)$ satisfies a lower Riesz condition.

  (i) $\Rightarrow$ (ii) If the normalized reproducing kernels $f_n$ satisfy a lower Riesz
  condition, then there exists a bounded linear operator
  \begin{equation*}
    S: \mathcal{H} \to \ell^2, \quad f_n \mapsto e_n.
  \end{equation*}
  Another small compution shows that $T S^* (\alpha_n) = (\alpha_n)$
  for all $(\alpha_n) \in \ell^2$. Hence $(z_n)$ is simply interpolating.
\end{proof}

Finally, we reformulate the strong separation condition.
We first require the following definition.

\begin{definition}
  \label{defn:1}
  Let $(g_n)$ be a sequence of vectors in a Hilbert space $\mathcal{H}$.
  We say that $(g_n)$ is \emph{$\varepsilon$-uniformly minimal} if
  \begin{equation*}
    \dist\left(g_n, \bigvee_{j \neq n} g_j\right) \ge \varepsilon.
  \end{equation*}
  We say that $(g_n)$ is uniformly minimal if it is $\varepsilon$-uniformly minimal for some $\varepsilon > 0$.
\end{definition}

The following lemma is well known.

\begin{lemma}
  \label{lem:dual_system}
  Let $(g_n)$ be sequence of vectors in a Hilbert space. The following are equivalent:
  \begin{enumerate}[label=\normalfont{(\roman*)}]
    \item $(g_n)$ is $\varepsilon$-uniformly minimal;
    \item there exists a sequence $(h_n)$ of vectors in $\bigvee_n g_n$ with $\langle g_n, h_j  \rangle = \delta_{n j}$
      and $\|h_n\| \le \frac{1}{\varepsilon}$ for all $n,j \in \mathbb{N}$.
  \end{enumerate}
\end{lemma}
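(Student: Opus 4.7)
The plan is to prove the two implications separately, following the standard construction of a biorthogonal dual system associated to a uniformly minimal sequence.

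For the implication (i) $\Rightarrow$ (ii), the idea is to produce each $h_n$ essentially as the component of $g_n$ orthogonal to the other $g_j$'s, suitably normalized. More precisely, let $M_n = \bigvee_{j \neq n} g_j$, let $P_n$ denote the orthogonal projection onto $M_n$ (inside the whole ambient Hilbert space, or equivalently inside $\bigvee_n g_n$), and set $\widetilde{h}_n = g_n - P_n g_n$. By the definition of $P_n$ we have $\|\widetilde{h}_n\| = \dist(g_n, M_n) \ge \varepsilon$, and $\widetilde{h}_n$ is orthogonal to every $g_j$ with $j \neq n$. Moreover, a direct computation gives $\langle g_n, \widetilde{h}_n \rangle = \|\widetilde{h}_n\|^2$. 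One then defines $h_n = \widetilde{h}_n / \|\widetilde{h}_n\|^2$; this vector lies in $\bigvee_m g_m$, satisfies $\langle g_j, h_n \rangle = \delta_{jn}$, and obeys $\|h_n\| = 1/\|\widetilde{h}_n\| \le 1/\varepsilon$. After relabeling the roles of $n$ and $j$, this is exactly condition (ii).

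For the converse (ii) $\Rightarrow$ (i), the argument is a one-line application of Cauchy--Schwarz. Given any finite linear combination $v = \sum_{j \neq n} \alpha_j g_j \in M_n$, the assumed biorthogonality yields
\begin{equation*}
  \langle g_n - v, h_n \rangle = \langle g_n, h_n \rangle - \sum_{j \neq n} \alpha_j \langle g_j, h_n \rangle = 1.
\end{equation*}
Since $h_n \in \bigvee_m g_m$ and the biorthogonality passes to closed linear spans by continuity, the same identity holds for any $v$ in the closure $M_n$. Cauchy--Schwarz gives $1 \le \|g_n - v\| \cdot \|h_n\| \le \|g_n - v\|/\varepsilon$, so $\|g_n - v\| \ge \varepsilon$; taking the infimum over $v \in M_n$ yields $\dist(g_n, M_n) \ge \varepsilon$.

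I do not expect any genuine obstacle here; the lemma is a classical fact about uniformly minimal systems. The only point requiring a little care is noting that $\widetilde{h}_n$ indeed lies in $\bigvee_m g_m$ (true because both $g_n$ and $P_n g_n$ do), so that the constructed $h_n$ satisfies the membership condition in (ii).
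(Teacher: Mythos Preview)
Your proof is correct and follows essentially the same approach as the paper: for (i) $\Rightarrow$ (ii) you construct $h_n$ as the normalized orthogonal complement of $g_n$ relative to $\bigvee_{j\neq n} g_j$, and for (ii) $\Rightarrow$ (i) you apply Cauchy--Schwarz to $\langle g_n - v, h_n\rangle = 1$. The paper's argument is identical in substance, only slightly more terse.
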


\begin{proof}
  (ii) $\Rightarrow$ (i) The Cauchy--Schwarz inequality shows that for all $x \in \bigvee_{j \neq n} g_j$, we have
  \begin{equation*}
    \|h_n\| \|g_n - x\| \ge | \langle g_n -x, h_n \rangle| =1.
  \end{equation*}
  So $\|g_n - x\| \ge \varepsilon$.

  (i) $\Rightarrow$ (ii) Let $x_n$ be the orthogonal projection of $g_n$ into $\bigvee_{j \neq n} g_j$
  and let $h_n = \frac{g_n - x_n}{\|g_n - x_n\|^2}$. Then $\langle g_n, h_j \rangle  = \delta_{nj}$
  and 
  \begin{equation*}
    \|h_n\| = \frac{1}{\dist(g_n, \bigvee_{j \neq n} g_j)} \le \frac{1}{\varepsilon}. \qedhere
  \end{equation*}
\end{proof}

The sequence $(h_n)$ in (ii) is unique and is called the minimal dual system of $(g_n)$.

A sequence $(z_n)$ in $X$ is called \emph{$\varepsilon$-strongly separated} if
for each $n$, there exists a multiplier $\varphi_n$ of multiplier norm at most one with $\varphi_n(z_j) = \varepsilon \delta_{nj}$.
Thus, $(z_n)$ is strongly separated if and only if it is $\varepsilon$-strongly separated for some $\varepsilon > 0$.

The following standard result gives the connection between uniform minimality and strong separation.

\begin{proposition}
  \label{prop:SS_Pick}
  Let $f_n = \frac{k(\cdot,z_n)}{k(z_n,z_n)^{1/2}}$ be the sequence
  of normalized reproducing kernels at the points $(z_n)$. Let $\varepsilon > 0$.
  Consider the following statements:
  \begin{enumerate}[label=\normalfont{(\roman*)}]
    \item $(f_n)$ is $\varepsilon$-uniformly minimal.
    \item $(z_n)$ is $\varepsilon$-strongly separated.
  \end{enumerate}
  Then (ii) $\Rightarrow$ (i). If $\mathcal{H}$ is a Pick space, then (i) $\Leftrightarrow$ (ii).
\end{proposition}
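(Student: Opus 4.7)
For the implication (ii) $\Rightarrow$ (i), the plan is to exploit the action of adjoints of multiplication operators on reproducing kernels. Assuming $(z_n)$ is $\varepsilon$-strongly separated, pick multipliers $\varphi_n$ with $\|\varphi_n\|_{\Mult(\mathcal{H})} \le 1$ and $\varphi_n(z_j) = \varepsilon \delta_{nj}$. The standard identity $M_{\varphi}^* k(\cdot,w) = \overline{\varphi(w)} k(\cdot,w)$ gives $M_{\varphi_n}^* f_j = \varepsilon \delta_{nj} f_j$. Since $\|M_{\varphi_n}^*\| \le 1$, for any finite linear combination $x = \sum_{j \neq n} c_j f_j$ we obtain
\begin{equation*}
  \|f_n - x\| \ge \|M_{\varphi_n}^*(f_n - x)\| = \varepsilon \|f_n\| = \varepsilon,
\end{equation*}
and by continuity this extends to all $x \in \bigvee_{j \neq n} f_j$. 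This yields $\dist(f_n, \bigvee_{j \neq n} f_j) \ge \varepsilon$.

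For the converse under the Pick assumption, the plan is to produce each $\varphi_n$ directly from the Pick property by verifying positivity of all finite Pick matrices. Fix $n$ and a finite index set containing $n$. The Pick matrix for the target values $\varepsilon \delta_{nj}$ is, after dividing rows and columns by $k(z_i,z_i)^{1/2}$, equal to $G_N - \varepsilon^2 e_n e_n^{T}$, where $G_N$ is the truncation of the Gramian. Positivity of this matrix is equivalent to the inequality $\bigl\|\sum_j v_j f_j\bigr\|^2 \ge \varepsilon^2 |v_n|^2$ for every vector $v$. If $v_n = 0$ this is trivial; if $v_n \neq 0$, then $w := -\sum_{j \neq n}(v_j/v_n) f_j \in \bigvee_{j \neq n} f_j$, and uniform minimality gives $\|f_n - w\| \ge \varepsilon$, so
\begin{equation*}
  \Bigl\|\sum_j v_j f_j\Bigr\| = |v_n|\,\|f_n - w\| \ge \varepsilon |v_n|,
\end{equation*}
as required.

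Since every finite Pick matrix is positive, the compactness argument for infinitely many nodes recalled in the preliminaries produces a multiplier $\varphi_n$ of norm at most one with $\varphi_n(z_j) = \varepsilon \delta_{nj}$ for all $j$. Doing this for each $n$ gives the desired family and shows that $(z_n)$ is $\varepsilon$-strongly separated. I do not anticipate a real obstacle here: the only mildly delicate point is the bookkeeping that identifies the normalized Pick matrix with $G_N - \varepsilon^2 e_n e_n^T$, so that positivity translates cleanly into a quadratic form statement about the $f_j$, after which uniform minimality is exactly what is needed.
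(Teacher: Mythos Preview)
Your proposal is correct. The overall strategy matches the paper's, but the packaging differs in both directions, and it is worth noting how.

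For (ii) $\Rightarrow$ (i), the paper projects $\varphi_n f_n$ onto $\bigvee_j f_j$ to obtain a dual vector $h_n$ with $\langle h_n, f_j\rangle = \varepsilon \delta_{nj}$ and $\|h_n\|\le 1$, then invokes Lemma~\ref{lem:dual_system}. Your argument instead applies the contraction $M_{\varphi_n}^*$ directly to $f_n - x$ and reads off the distance estimate; this is a clean one-line use of the eigenvector identity $M_\varphi^* f_j = \overline{\varphi(z_j)} f_j$ and avoids the dual-system bookkeeping entirely.

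For (i) $\Rightarrow$ (ii), the paper builds the rank-one contraction $T_n f = \varepsilon \langle f,h_n\rangle f_n$ on $\bigvee_j f_j$ from the minimal dual system and appeals to the Pick property in operator form to extend $T_n$ to an $M_{\varphi_n}^*$. You instead verify positivity of every finite Pick matrix by hand, observing that after normalization it becomes $G_N - \varepsilon^2 e_n e_n^*$, whose positivity is exactly the quadratic-form inequality $\|\sum_j v_j f_j\|^2 \ge \varepsilon^2|v_n|^2$ that uniform minimality guarantees; then you pass to infinitely many nodes by the compactness argument stated in the preliminaries. The two routes are equivalent (the rank-one contraction has norm $\le 1$ precisely when that Pick matrix is positive), so neither buys more than the other; yours is perhaps more self-contained, while the paper's is slightly quicker once the dual system is in hand.
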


\begin{proof}
  (ii) $\Rightarrow$ (i)
  Let $\varphi_n$ be as the definition of $\varepsilon$-strong separation.
  Let $h_n$ be the orthogonal projection of $\varphi_n f_n$
  onto $\bigvee_{j} f_{j}$. Then $\|h_n\| \le 1$.
  If $j \neq n$, then
  \begin{equation*}
    \langle h_n, f_j \rangle = \langle \varphi_n f_n, f_j \rangle  = 0,
  \end{equation*}
  as $\varphi_n(z_j) = 0$. For $j=n$, we have
  \begin{equation*}
    \langle h_n, f_n \rangle = \langle \varphi_n f_n, f_n \rangle = \varepsilon.
  \end{equation*}
  From this, it follows that $\dist(f_n, \bigvee_{j \neq n} f_j) \ge \varepsilon$, so $(f_n)$ is $\varepsilon$-uniformly minimal; see Lemma \ref{lem:dual_system}.

  (i) $\Rightarrow$ (ii) Assume now that $\mathcal{H}$ is a Pick space and let $(f_n)$ be $\varepsilon$-strongly
  separated.
  Let $(h_n)$ be the minimal dual system. Then $\|h_n\| \le \frac{1}{\varepsilon}$ by Lemma \ref{lem:dual_system}.
  Thus, for each $n$, the map
  \begin{equation*}
    T_n: \bigvee_{j} f_j \to \bigvee_j f_j, \quad f \mapsto \varepsilon \langle f, h_n \rangle f_n,
  \end{equation*}
  has norm at most $1$, and $T_n(f_j) = 0$ if $j \neq n$ and $T_n(f_n) = \varepsilon f_n$.
  By the Pick property, $T_n$ extends to the adjoint of a multiplication operator $M_{\varphi_n}$,
  with $\|\varphi_n\|_{\Mult(\mathcal{H})} \le 1$, and $\varphi_n(z_j) = \varepsilon \delta_{nj}$.
\end{proof}

It is clear that any sequence that satisfies a lower Riesz condition is uniformly minimal.
Thus, Propositions \ref{prop:SI_Gramian} and \ref{prop:SS_Pick} show that (SI) $\Rightarrow$ (SS) in complete Pick spaces.
Proving Theorem \ref{thm:simply_interpolating_intro} amounts to showing that the converse
holds.

We will also use the following multiplier algebra formulation of simply interpolating sequences.
It is contained in the proof of \cite[Theorem 9.46]{AM02}.

\begin{proposition}
  \label{prop:row}
  Let $\mathcal{H}$ be a normalized complete Pick space.
  Then $G \ge \varepsilon^2 I$ if and only if there exists a row multiplier $\Psi \in \Mult(\mathcal{H} \otimes \ell^2,\mathcal{H})$ of norm at most one
  with $\Psi(z_n) = \varepsilon e_n^T$ for all $n$.
\end{proposition}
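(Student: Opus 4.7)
The plan is to view this as a direct application of the operator/row-valued Pick property, together with an elementary diagonal conjugation that converts the resulting Pick matrix into $G - \varepsilon^2 I$. The statement is essentially a dictionary between a Gramian inequality and a solvability condition, and the complete Pick property is what allows us to cross between the two.

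First, I would make the data concrete: a row multiplier $\Psi\in\Mult(\mathcal{H}\otimes\ell^2,\mathcal{H})$ is determined by a sequence $(\varphi_m)_m$ of multipliers, with $\|\Psi\|\le 1$ meaning that the row operator $[M_{\varphi_1},M_{\varphi_2},\dots]$ is a contraction on $\mathcal{H}\otimes\ell^2\to\mathcal{H}$. The prescribed values $\Psi(z_n)=\varepsilon e_n^T$ unpack as $\varphi_m(z_n)=\varepsilon\,\delta_{mn}$. Because $\mathcal{H}$ is a (normalized) complete Pick space, the row-valued Pick interpolation theorem applies: such a $\Psi$ of norm at most one exists if and only if for every finite $N$, the Pick matrix
\begin{equation*}
  \bigl[(1-\Psi(z_j)\Psi(z_i)^*)\,k(z_j,z_i)\bigr]_{i,j=1}^{N}
\end{equation*}
is positive semidefinite. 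The passage from finitely many nodes to the entire sequence is the standard compactness argument recalled in Section \ref{sec:prelim}.

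Next I would compute the Pick matrix explicitly. Since $\Psi(z_j)\Psi(z_i)^*=\varepsilon e_j^T\cdot\varepsilon\,\overline{e_i^T}^{\,T}=\varepsilon^2\delta_{ij}$, the Pick matrix collapses to
\begin{equation*}
  \bigl[k(z_j,z_i)-\varepsilon^2\delta_{ij}k(z_i,z_i)\bigr]_{i,j=1}^{N} = K_N - \varepsilon^2 D_N^2,
\end{equation*}
where $K_N=[k(z_j,z_i)]_{i,j=1}^N$ and $D_N$ is the diagonal matrix with entries $k(z_i,z_i)^{1/2}$. Conjugating by the (invertible, positive) diagonal $D_N^{-1}$ gives $D_N^{-1}K_N D_N^{-1} - \varepsilon^2 I_N = G_N - \varepsilon^2 I_N$, so positivity of the Pick matrix for all $N$ is precisely the condition $G\ge\varepsilon^2 I$.

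Combining these two steps yields the equivalence in both directions simultaneously. There is no real obstacle: the only point that needs a brief justification is the use of the complete (row-valued) Pick property, which is exactly the content highlighted in the preliminaries, and the extension from finite to infinite interpolation data, which is the standard weak-$*$ compactness reduction already invoked in Section \ref{sec:prelim}. The rest is bookkeeping with the diagonal normalization that converts $K$ to $G$.
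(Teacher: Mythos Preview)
Your argument is correct and is essentially the same as the paper's: both invoke the (row-valued) complete Pick property to reduce existence of $\Psi$ to positivity of the Pick matrix $[(1-\varepsilon^2\delta_{nm})k(z_n,z_m)]$, and then rescale by the diagonal $k(z_i,z_i)^{1/2}$ to convert this into $G-\varepsilon^2 I\ge 0$. The paper's proof is slightly terser but follows exactly the same route.
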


\begin{proof}
  See \cite[Theorem 9.46]{AM02}. This also follows directly from the complete Pick property. Indeed, such a multiplier exists if and only if the matrix 
  \[ \big[ k(z_n,z_m)(1-\varepsilon^2 \langle e_n , e_m \rangle_{\ell^2(\mathbb{N})} )  \big]_{n,m} = \big[ k(z_n,z_m) (1 - \varepsilon^2 \delta_{nm})\big]_{n,m}\]
  is positive semi-definite. Equivalently, by rescaling, if and only if
  \[ \bigg[ \frac{ k(z_n,z_m) }{\sqrt{k(z_n,z_n) k(z_m,z_m)}}(1 - \varepsilon^2 \delta_{nm}) \bigg]_{n,m} = G - \varepsilon^2 I \geq 0. \qedhere \]
\end{proof}

The implication (SI) $\Rightarrow$ (SS) can also be seen from this result,
since the entries of the row multiplier $\Psi$ show strong separation of the sequence $(z_n)$.

\section{Uniform minimality and the Feichtinger conjecture}
\label{sec:um_feichtinger}

In this section, we consider Gramians of general unit vectors in Hilbert spaces.
We will later apply the results to normalized kernel vectors in complete Pick spaces,
but the arguments in this section are applicable in greater generality.

We begin by recalling the positive solution to the Feichtinger conjecture  due to Marcus, Spielman and Srivastava \cite{MSS15}.
Their theorem says that any sequence of unit vectors in a Hilbert space whose Gramian is bounded is a finite union
of sequences whose Gramians are bounded and bounded below.
We will use a quantitative refinement due to Bownik, Casazza, Marcus and Speegle \cite{BCM+15}.

\begin{theorem}[Bownik--Casazza--Marcus--Speegle]
  \label{thm:Feichtinger_Bessel}
  Let $(f_n)$ be a sequence of vectors in a Hilbert space with $\|f_n\|^2 \ge \varepsilon > 0$.
  Assume that the Gramian of $(f_n)$ has norm at most $1$. Then $(f_n)$ can be partitioned into $O(\frac{1}{\varepsilon})$ sequences,
  each of whose Gramian is bounded below by $\varepsilon_0 \varepsilon$.
  Here, $\varepsilon_0$ and the constant implicit in $O(\frac{1}{\varepsilon})$ are universal constants.
\end{theorem}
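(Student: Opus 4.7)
My plan is to reduce Theorem~\ref{thm:Feichtinger_Bessel} to the usual unit-norm formulation of the Feichtinger conjecture, which is the form in which \cite{BCM+15} establish it quantitatively as a consequence of the Marcus--Spielman--Srivastava resolution of the Kadison--Singer problem. In that formulation, a sequence of unit vectors with Bessel bound $B \ge 1$ can be partitioned into $O(B)$ Riesz basic sequences, each with lower Riesz bound bounded below by some universal constant $\varepsilon_0 > 0$; I will take this as a black box.

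The first step is to normalize. Set $\tilde f_n = f_n/\|f_n\|$ and write $D$ for the diagonal operator on $\ell^2$ with entries $\|f_n\|^2 \ge \varepsilon$. The Gramian of $(\tilde f_n)$ is $\tilde G = D^{-1/2} G D^{-1/2}$, so
\begin{equation*}
\|\tilde G\| \le \|D^{-1/2}\|^2\,\|G\| \le \frac{1}{\varepsilon}.
\end{equation*}
Thus $(\tilde f_n)$ is a unit-norm Bessel sequence with Bessel bound $1/\varepsilon$.

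Applying the quantitative Feichtinger theorem of \cite{BCM+15} to $(\tilde f_n)$ with $B = 1/\varepsilon$ yields a partition of the index set into $r = O(1/\varepsilon)$ subsets $S_1, \dots, S_r$ on each of which the normalized vectors satisfy a lower Riesz bound $\varepsilon_0$ that is independent of $\varepsilon$. Undoing the normalization, for every finitely supported scalar sequence $(\alpha_n)_{n \in S_j}$ I compute
\begin{equation*}
\Bigl\|\sum_{n \in S_j}\alpha_n f_n\Bigr\|^2 = \Bigl\|\sum_{n \in S_j}(\alpha_n\|f_n\|)\,\tilde f_n\Bigr\|^2 \ge \varepsilon_0\sum_{n \in S_j}|\alpha_n|^2\,\|f_n\|^2 \ge \varepsilon_0\varepsilon\sum_{n \in S_j}|\alpha_n|^2,
\end{equation*}
which is precisely the assertion $G_{S_j,S_j} \ge \varepsilon_0\varepsilon\, I$ claimed in the theorem.

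All the substantial work sits in the Feichtinger theorem of \cite{BCM+15}, which in turn rests on the resolution of Weaver's $\mathrm{KS}_r$ conjecture by Marcus--Spielman--Srivastava; my reduction is merely the book-keeping needed to pass between unit-norm Bessel sequences and general vectors with uniform lower norm bound $\sqrt{\varepsilon}$. The main obstacle in a from-scratch proof would therefore be establishing the \emph{linear} dependence $r = O(B)$ together with a universal lower Riesz bound on the pieces, as opposed to the weaker bounds one would get by naive iteration of $\mathrm{KS}_2$; this is exactly the content of \cite{BCM+15} that must be invoked here.
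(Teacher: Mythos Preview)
The paper does not prove this theorem at all: it is stated as a quotation of \cite[Theorem~6.7]{BCM+15} (with the infinite case in \cite[Theorem~6.11]{BCM+15}), and no argument is given beyond the citation. Your reduction to the unit-norm formulation is correct and is the natural bookkeeping that shows why the general statement follows from the standard quantitative Feichtinger theorem for unit-norm Bessel sequences; in that sense you have done slightly more than the paper, which simply asserts that the result as stated is already in \cite{BCM+15}.
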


The statement for finite sequences (which is all we need) is \cite[Theorem 6.7]{BCM+15}; see \cite[Theorem 6.11]{BCM+15}
for the extension to infinite sequences.

Unlike in the proof of (WS) + (C) $\Rightarrow$ (IS) in \cite{AHM+17},
Theorem \ref{thm:Feichtinger_Bessel} does not apply directly to our situtation.
This is because the Gramian of a sequence of normalized reproducing kernels is bounded if and only if the Carleson measure condition holds; see Proposition \ref{prop:BG}.
But the Carleson measure condition need not hold for simply interpolating sequences; indeed, this is what distinguishes
simply interpolating sequences from interpolating sequences for the multiplier algebra.

Proposition \ref{prop:uniformly_separated} shows that we need to work with uniformly minimal sequences.
With the help of Theorem \ref{thm:Feichtinger_Bessel}, we will establish the following result.

\begin{theorem}
  \label{thm:Feichtinger}
  Let $(f_n)$ be a sequence of vectors in a Hilbert space that is uniformly minimal.
  Then $(f_n)$ is a finite union of sequences whose Gramian is bounded below.

  More precisely, there exist universal constants $\tau,M > 0$ so that
if $(f_n)$ is $\varepsilon$-uniformly minimal with $0 <\varepsilon \le 1$, then $(f_n)$ can be partitioned into $\lfloor \frac{M}{\varepsilon^2} \rfloor$ sequences,  each of whose Gramian is bounded below by $\tau \varepsilon^2$.
\end{theorem}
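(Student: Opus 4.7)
The plan is to reduce the theorem to Theorem \ref{thm:Feichtinger_Bessel} by a duality argument based on the biorthogonal system. As the paper notes, BCMS cannot be applied directly to $(f_n)$, since its Gramian need not be bounded; my strategy is instead to apply BCMS to a normalized dual sequence and to use an elementary duality to transfer the resulting Riesz decomposition back to $(f_n)$.

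The first step is to invoke Lemma \ref{lem:dual_system}: $\varepsilon$-uniform minimality furnishes a dual $(h_n) \subset \bigvee_m f_m$ with $\langle f_n, h_j \rangle = \delta_{nj}$ and $\|h_n\| \le 1/\varepsilon$. Setting $g_n := \varepsilon h_n$, I get $\varepsilon \le \|g_n\| \le 1$ (the lower bound comes from $1 = \langle f_n, h_n \rangle \le \|h_n\|$) and the rescaled biorthogonality $\langle f_n, g_j \rangle = \varepsilon \delta_{nj}$.

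The central reduction is the following: whenever $(g_n)_{n \in I}$ is Bessel with bound $B$, the subfamily $(f_n)_{n \in I}$ satisfies the lower Riesz condition with constant $\varepsilon^2/B$. Indeed, given $v = \sum_{n \in I} \alpha_n f_n$ with finitely supported $(\alpha_n)$, biorthogonality yields $\langle v, g_j \rangle = \varepsilon \alpha_j$, and the Bessel hypothesis gives
\[
  \varepsilon^2 \sum_{n \in I} |\alpha_n|^2 = \sum_{n \in I} |\langle v, g_n \rangle|^2 \le B \|v\|^2.
\]
Thus the theorem reduces to partitioning $\mathbb{N}$ into $\lfloor M / \varepsilon^2 \rfloor$ classes on each of which $(g_n)$ is Bessel with bound at most a universal constant $1/\tau$. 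I then plan to produce this partition by applying Theorem \ref{thm:Feichtinger_Bessel} to $(g_n)$: since $\|g_n\|^2 \ge \varepsilon^2$, invoking BCMS with parameter $\varepsilon^2$ gives precisely the $O(1/\varepsilon^2)$ count with the desired properties, provided one has a bounded Gramian. To bypass the absence of an \emph{a priori} bound, I will work through finite truncations: for each $N$, the finite sequence $(g_n)_{n=1}^N$ sits in a finite-dimensional space and automatically has bounded Gramian, so after a suitable rescaling BCMS produces a partition of $[N]$ into classes with a uniform lower Riesz bound. A compactness argument (the space of partitions of $\mathbb{N}$ into a fixed number of classes is compact in the product topology) then yields a single partition of $\mathbb{N}$ that is good on every initial segment and hence globally.

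The main obstacle is to ensure that the BCMS-produced partitions are uniform in $N$ in two senses: that the number of classes is bounded by $\lfloor M/\varepsilon^2 \rfloor$ irrespective of $N$, and that the Bessel bound for each class is a universal constant. Naive application of BCMS to the rescaled truncation gives a count of order $\|G_N\|/\varepsilon^2$, which may blow up with $N$. Overcoming this is the crux of the argument and may require an intermediate step showing that a uniformly minimal sequence admits a decomposition into a bounded (in terms of $\varepsilon$ only) number of Bessel subsequences before BCMS is applied. Once such a uniform partition is secured for $(g_n)$, the duality above transports its Bessel bound into the lower Riesz bound $\tau \varepsilon^2$ for the corresponding subfamilies of $(f_n)$, with $\tau$ and $M$ emerging as universal constants inherited from those of Theorem \ref{thm:Feichtinger_Bessel}.
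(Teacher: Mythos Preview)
Your duality observation is correct and useful: if $(g_n)_{n\in I}$ is Bessel with bound $B$, then $(f_n)_{n\in I}$ satisfies a lower Riesz inequality with constant $\varepsilon^2/B$. However, the proposal has a genuine gap, and you have in fact already located it yourself. The problem is that applying Theorem~\ref{thm:Feichtinger_Bessel} to the truncated dual sequence $(g_n)_{n=1}^N$ does not give a partition count that is uniform in $N$. In the finite setting the Gramian of the minimal dual system is exactly $\varepsilon^2 G_N^{-1}$ (Lemma~\ref{lem:Gramian_inverse}), so after rescaling to make this a contraction the norm-squared lower bound on the vectors becomes $\lambda_{\min}(G_N)$, and BCMS then yields $O(1/\lambda_{\min}(G_N))$ pieces. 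But $\lambda_{\min}(G_N)$ is precisely the quantity you are trying to bound below; the argument is circular. The ``intermediate step'' you suggest --- partitioning a uniformly minimal sequence into boundedly many Bessel subsequences --- is not a lemma on the way to the theorem, it is essentially equivalent to the theorem (note that the normalized dual sequence is again $\varepsilon$-uniformly minimal, so you have merely swapped $(f_n)$ for its dual).

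The paper resolves this with a different regularization. Rather than working with $G_N^{-1}$, one forms $H = (I + G_N^{-1})^{-1}$. This matrix is automatically a positive contraction, so BCMS applies with no rescaling; its diagonal entries are bounded below by $(1+\varepsilon^{-2})^{-1} \ge \varepsilon^2/2$ via Lemma~\ref{lem:inverse_diagonal} (since the diagonal of $G_N^{-1}$ is at most $\varepsilon^{-2}$ by uniform minimality); and crucially $H \le G_N$, so any lower bound for a compression of $H$ transfers directly to the same compression of $G_N$. This produces $O(1/\varepsilon^2)$ pieces with lower bound $\tau\varepsilon^2$, uniformly in $N$, after which the pinball principle passes to the infinite sequence. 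The operator-theoretic identity $H \le G$ replaces your Bessel--duality transfer, and the choice of $H$ is what makes the BCMS input data independent of $\lambda_{\min}(G_N)$.
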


The proof of Theorem \ref{thm:Feichtinger} occupies the remainder of this section.

Recall that if $(f_n)_{n=1}^N$ is a finite sequence of linearly independent vectors in a Hilbert space,
then the minimal dual system of $(f_n)$ is the unique sequence $(g_n)_{n=1}^N$ in $\bigvee_{n=1}^N f_n$
satisfying $\langle f_n,g_j \rangle = \delta_{nj}$ for all $1 \le n,j \le N$.

We require two elementary linear algebra lemmas, which are well known.
For the convenience of the reader, we provide the short proofs.
\begin{lemma}
  \label{lem:Gramian_inverse}
  Let $(f_n)_{n=1}^N$ be a finite sequence of linearly independent vectors in a Hilbert space
  and let $G = [ \langle f_i,f_j \rangle]_{i,j=1}^N$ be its Gramian.
  Let $(g_n)_{n=1}^N$ be the minimal dual system of $(f_n)_{n=1}^N$.
  Then $G^{-1} = [\langle g_i,g_j \rangle]_{i,j=1}^N$.
\end{lemma}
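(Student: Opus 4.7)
The plan is a direct linear-algebra computation in two steps. Since the minimal dual vectors $g_j$ lie in $\bigvee_{k=1}^N f_k$, I first expand
\[ g_j = \sum_{k=1}^N c_{jk}\, f_k \]
for an unknown coefficient matrix $C = (c_{jk})$. Substituting this into the defining biorthogonality relation $\langle f_i, g_j \rangle = \delta_{ij}$ and using sesquilinearity yields
\[ \delta_{ij} = \sum_k \overline{c_{jk}}\, \langle f_i, f_k \rangle = \sum_k \overline{c_{jk}}\, G_{ik}, \]
which is the matrix identity $G\,\overline{C}^{\,T} = I$. Because $G$, and hence $G^{-1}$, is Hermitian, this forces $c_{jk} = (G^{-1})_{jk}$, so $g_j = \sum_k (G^{-1})_{jk}\, f_k$.

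With this expansion in hand, I compute the Gramian of $(g_n)$ directly:
\[ \langle g_i, g_j \rangle = \sum_{k,\ell} (G^{-1})_{ik}\, \overline{(G^{-1})_{j\ell}}\, \langle f_k, f_\ell \rangle = \sum_{k,\ell} (G^{-1})_{ik}\, (G^{-1})_{\ell j}\, G_{k\ell}, \]
where the second equality uses Hermiticity of $G^{-1}$. The inner sum over $\ell$ equals $(G G^{-1})_{kj} = \delta_{kj}$, and the remaining sum collapses to $(G^{-1})_{ij}$, which is exactly the desired identity.

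I do not foresee a genuine obstacle here; the only delicate point is the bookkeeping of conjugations in the sesquilinear pairing, which is trivialized by Hermiticity of $G$. As a sanity check, the same conclusion can be obtained more conceptually via the synthesis operator $T\colon \mathbb{C}^N \to \mathcal{H}$, $e_i \mapsto f_i$: since $T^*T$ coincides with $G$ up to Hermitian conjugation, and $g_j = T G^{-1} e_j$ is the unique element of $\mathrm{range}(T)$ satisfying $T^* g_j = e_j$, one more application of the adjoint relation recovers $\langle g_i, g_j \rangle = (G^{-1})_{ij}$.
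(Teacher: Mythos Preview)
Your proof is correct. The bookkeeping with conjugates is handled cleanly, and the two-step computation (solve for the coefficient matrix $C$ from biorthogonality, then compute the dual Gramian as $G^{-1} G\, G^{-1}$) is a valid and self-contained argument.

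The paper takes a slightly more operator-theoretic route: it introduces the analysis operator $T\colon M \to \mathbb{C}^N$, $x \mapsto (\langle x,f_n\rangle)_n$, observes that $T^* e_n = f_n$ so that $G = T T^*$, and then notes that $T^{-1} e_n = g_n$ by the defining property of the dual system; the identity $G^{-1} = (T^*)^{-1} T^{-1}$ then exhibits $G^{-1}$ directly as the Gramian of the $g_n$. Your closing ``sanity check'' via the synthesis operator is essentially this argument with $T$ replaced by its adjoint. The two approaches are equivalent in substance; the paper's version avoids expanding $g_j$ in coordinates and tracking conjugates, at the cost of one extra layer of abstraction, while yours makes every step explicit at the matrix level.
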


\begin{proof}
  Let $M = \bigvee_{n=1}^N f_n$ and let
  \begin{equation*}
    T: M \to \mathbb{C}^N, \quad x \mapsto ( \langle x,f_n \rangle)_{n=1}^N.
  \end{equation*}
  A simple computation shows that $T^* e_n = f_n$, where $(e_n)$ is the usual orthonormal basis of $\mathbb{C}^n$.
  By linear independence, $T$ is invertible, and $G = T T^*$.
  So $G^{-1} = (T^*)^{-1} T^{-1}$.
  By definition of minimal dual system, $T^{-1} e_n = g_n$.
  Another small computation shows that
  \begin{equation*}
    (T^{-1})^* x  = ( \langle x, g_n \rangle)_{n=1}^N
  \end{equation*}
  for all $x \in M$.
  Thus, $G^{-1} = (T^*)^{-1} T^{-1}$ is the Gramian of the $g_n$.
\end{proof}

If $A \in M_n(\mathbb{C})$, we denote the $(k,n)$ entry of $A$ by $A_{kn}$.
\begin{lemma}
  \label{lem:inverse_diagonal}
  Let $A \in M_n(\mathbb{C})$ be positive and invertible.
  Then for each $n=1,\ldots,N$, we have
  \begin{equation*}
    (A^{-1})_{nn} \ge (A_{nn})^{-1}.
  \end{equation*}
\end{lemma}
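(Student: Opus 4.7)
The plan is to deduce the inequality from a one-line Cauchy--Schwarz argument applied to the square root of $A$. Since $A$ is positive and invertible, the operators $A^{1/2}$ and $A^{-1/2}$ are well-defined and self-adjoint. For the standard basis vector $e_n$, write
\begin{equation*}
  1 = \langle e_n, e_n \rangle = \langle A^{-1/2} e_n, A^{1/2} e_n \rangle.
\end{equation*}
Applying Cauchy--Schwarz and squaring yields
\begin{equation*}
  1 \le \|A^{-1/2} e_n\|^2 \, \|A^{1/2} e_n\|^2 = \langle A^{-1} e_n, e_n \rangle \, \langle A e_n, e_n \rangle = (A^{-1})_{nn} \, A_{nn},
\end{equation*}
and rearranging gives the desired inequality. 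Positivity and invertibility of $A$ guarantee that $A_{nn} > 0$, so division is justified.

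As a sanity check, I would also note the alternative route via the Schur complement: after conjugating by a permutation to move the $n$-th coordinate to the last position, one block-decomposes $A = \begin{pmatrix} B & v \\ v^* & A_{nn} \end{pmatrix}$ with $B$ positive and invertible (as a principal submatrix of a positive invertible matrix), and then $(A^{-1})_{nn} = (A_{nn} - v^* B^{-1} v)^{-1}$. Since $v^* B^{-1} v \ge 0$, the Schur complement is at most $A_{nn}$, giving the same conclusion. I would present only the Cauchy--Schwarz proof, as it is shortest and avoids block manipulations.

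There is essentially no obstacle here: the inequality is classical and the above computation is the standard proof. The only care needed is to observe that invertibility and positivity of $A$ ensure both $A_{nn} > 0$ and that $A^{\pm 1/2}$ make sense.
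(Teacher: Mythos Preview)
Your proof is correct and is essentially the same Cauchy--Schwarz argument as the paper's: the paper realizes $A$ as the Gramian of vectors $f_n$ and $A^{-1}$ as the Gramian of the dual system $g_n$, then applies Cauchy--Schwarz to $\langle f_n, g_n\rangle = 1$, which with the choice $f_n = A^{1/2} e_n$, $g_n = A^{-1/2} e_n$ is exactly your computation. Your version is slightly more self-contained since it bypasses the preceding Gramian-inverse lemma, but the underlying idea is identical.
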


\begin{proof}
  Since $A$ is positive and invertible, it is the Gramian of a linearly independent sequence $(f_n)_{n=1}^N$ in $\mathbb{C}^N$.
  Let $(g_n)_{n=1}^N$ be the minimal dual system. Then Lemma \ref{lem:Gramian_inverse} and the Cauchy-Schwarz inequality show that
  \begin{equation*}
    (A^{-1})_{nn} = \|g_n\|^2 \ge \frac{1}{\|f_n\|^2} | \langle g_n, f_n \rangle|^2 = \frac{1}{\|f_n\|^2} =
    (A_{nn})^{-1}. \qedhere
  \end{equation*}
\end{proof}

We are now ready for the proof of Theorem \ref{thm:Feichtinger}.
The basic idea is the following.
Let $G$ be the Gramian of $(f_n)$. As explained above,
$G$ may be unbounded. Instead, we consider the operator $(I + G^{-1})^{-1}$, which, at least formally,
is a positive contraction. In particular, $(I + G^{-1})^{-1}$ is again a Gramian of a sequence $(h_n)$,
and we will apply the solution of the Feichtinger conjecture (Theorem \ref{thm:Feichtinger_Bessel}) to the
sequence $(h_n)$. To avoid technical difficulties in working with unbounded operators, we will first consider
finite sequences, and then pass to the limit.
  Of course, it is then crucial to obtain estimates that are independent of the length of the sequence.

\begin{proof}[Proof of Theorem \ref{thm:Feichtinger}]
  Let $(f_n)_{n=1}^N$ be an $\varepsilon$-uniformly minimal sequence and let $G = [ \langle f_i,f_j \rangle]_{i,j=1}^N $ be its Gramian. By uniform minimality, the $f_n$ are linearly independent, so $G$ is invertible,
  and $G^{-1}$ is positive. Thus, $H = (I + G^{-1})^{-1}$ is a positive contraction.
  In particular, $H$ is again a Gramian, so $H = [ \langle h_i,h_j \rangle]_{i,j=1}^N$ for some vectors $h_1,\ldots,h_n \in \mathbb{C}^N$.

  In order to apply Theorem \ref{thm:Feichtinger_Bessel}, we require a lower bound on the norm of the $h_n$. Since the $f_n$ are $\varepsilon$-uniformly minimal,
  Lemmas \ref{lem:dual_system} and \ref{lem:Gramian_inverse} imply that the diagonal entries of $G^{-1}$ are bounded above by $\frac{1}{\varepsilon^2}$, so the diagonal entries of $I + G^{-1}$ are bounded above by $1+\frac{1}{\varepsilon^2}$.
  Lemma \ref{lem:inverse_diagonal} then shows that the diagonal entries of $H$ are bounded below
  by $(1 + \varepsilon^{-2})^{-1}$. Therefore,
  \begin{equation*}
    \|h_n\|^2 \ge (1 + \varepsilon^{-2})^{-1} \ge \frac{1}{2} \varepsilon^2.
  \end{equation*}

  In this setting, Theorem \ref{thm:Feichtinger_Bessel} shows that there exist universal constants $M,\tau > 0$ so that $(h_n)_{n=1}^N$ can be partitioned into $r \le \frac{M}{\varepsilon^2}$ sequences, each of whose Gramian is bounded below by $\tau \varepsilon^2$.
  Now, since $H = (I + G^{-1})^{-1}$, we have $H \le G$.
  Thus, the Gramian of any subsequence of $(f_n)$ is bounded below by the Gramian of the corresponding
  subsequence of $(h_n)$. In particular, if we partition $(f_n)$ in the same
  way as $(h_n)$, then we obtain a partition of $(f_n)$ into $r$ sequences, each of whose Gramian
  is bounded below by $\tau \varepsilon^2$.

  Since the number $r$ is independent of $N$, the case of an infinite sequence follows from the case of finite
  sequences by a combinatorial compactness result known as the pinball principle; see \cite[Proposition 2.1]{CCL+05} or \cite[Theorem 6.9]{BCM+15}
  and also the proof of \cite[Theorem 6.11]{BCM+15}.
\end{proof}

\section{Equivalence of simple interpolation and strong separation}
\label{sec:si_ss}

We are now ready for the proof of Theorem \ref{thm:simply_interpolating_intro}.
As mentioned before, it is known that every simply interpolating sequence is strongly separated;
see the discussion following Proposition \ref{prop:SS_Pick}.
We will establish the following quantitative converse.

\begin{theorem}
  \label{thm:simply_interpolating}
  Let $\mathcal{H}$ be a normalized complete Pick space.
  Then every strongly separated sequence is simply interpolating.
  More precisely, if $(z_n)$ is $\varepsilon$-strongly separated, then the Gramian
  of the normalized reproducing kernels is bounded below by $\frac{\tau}{M} \varepsilon^6$,
  where $\tau$ and $M$ are the constants of Theorem \ref{thm:Feichtinger}.
\end{theorem}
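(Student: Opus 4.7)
The plan is to use Theorem~\ref{thm:Feichtinger} to split $(z_n)$ into finitely many simply interpolating subsequences and then glue the resulting partial row multipliers back together using the strong separation multipliers, invoking Proposition~\ref{prop:row} in both directions.

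By Proposition~\ref{prop:SS_Pick}, $\varepsilon$-strong separation gives $\varepsilon$-uniform minimality of the normalized kernels $(f_n)$. Theorem~\ref{thm:Feichtinger} then produces a partition $\mathbb{N} = I_1 \sqcup \cdots \sqcup I_r$ with $r \le M/\varepsilon^2$ such that $G|_{I_k} \ge \tau \varepsilon^2 I$ for every $k$. Applying the forward direction of Proposition~\ref{prop:row} to each piece yields contractive row multipliers $\Psi_k = [\varphi_n^{(k)}]_{n \in I_k}$ with $\varphi_n^{(k)}(z_j) = \sqrt{\tau}\,\varepsilon\,\delta_{nj}$ for all $n,j \in I_k$. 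The obstacle at this stage is that the values $\varphi_n^{(k)}(z_m)$ for $m \notin I_k$ are uncontrolled, so the $\Psi_k$ cannot simply be concatenated into a contractive row for the full sequence.

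The key idea is to kill those unwanted values by multiplying each entry against a strong separation multiplier. For $n \in I_k$, set $\tilde\varphi_n := \varphi_n^{(k)}\,\varphi_n$, where $\varphi_n \in \Mult(\mathcal{H})$ is the multiplier witnessing $\varepsilon$-strong separation at $z_n$. Then $\tilde\varphi_n(z_j) = \sqrt{\tau}\,\varepsilon^2\,\delta_{nj}$ for \emph{every} $j \in \mathbb{N}$. Crucially, the row $[\tilde\varphi_n]_{n \in I_k}$ factors as $\Psi_k$ composed with the diagonal operator $(h_n)_n \mapsto (\varphi_n h_n)_n$ on $\mathcal{H} \otimes \ell^2(I_k)$, which is a contraction since $\|\varphi_n\|_{\Mult(\mathcal{H})} \le 1$; hence $[\tilde\varphi_n]_{n \in I_k}$ is itself a contractive row multiplier.

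The full row $\Psi := [\tilde\varphi_n]_{n \in \mathbb{N}}$ is then a sum of $r$ contractive operators acting on pairwise orthogonal subspaces of $\mathcal{H} \otimes \ell^2$, so Cauchy--Schwarz gives $\|\Psi\| \le \sqrt{r}$. Consequently, $\Psi/\sqrt{r}$ is a contractive row multiplier taking the value $\sqrt{\tau/r}\,\varepsilon^2\,e_n^T$ at each $z_n$, and the converse direction of Proposition~\ref{prop:row} delivers $G \ge (\tau/r)\,\varepsilon^4 I \ge (\tau/M)\,\varepsilon^6 I$. The step I expect to be the most delicate is verifying that $[\varphi_n^{(k)}\varphi_n]_{n \in I_k}$ is a contractive row multiplier: this is the precise moment where the complete Pick property (through Proposition~\ref{prop:row}) interacts with strong separation to upgrade the purely Hilbertian partition produced in Section~\ref{sec:um_feichtinger} into a single row multiplier for the whole sequence.
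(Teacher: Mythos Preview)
Your proposal is correct and is essentially the paper's proof: partition via Theorem~\ref{thm:Feichtinger}, use Proposition~\ref{prop:row} on each piece to get contractive partial rows, multiply entrywise by the strong separation multipliers (the paper phrases this as right-multiplying the concatenated row by a diagonal multiplier $D$), and invoke Proposition~\ref{prop:row} again after normalizing by $\sqrt{r}$. The only difference is the order in which you multiply by the diagonal and concatenate, which is cosmetic; the step you flagged as delicate is exactly the factorization $\Phi D$ in the paper's proof, and your justification of it is correct.
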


\begin{proof}
  Let $(z_n)$ be sequence in $X$ that is $\varepsilon$-strongly separated.
  Clearly, $\varepsilon \le 1$.
  Let $(f_n)$ be the normalized reproducing kernels, which are $\varepsilon$-uniformly minimal
  by Proposition \ref{prop:SS_Pick}.
  By Theorem \ref{thm:Feichtinger}, we may partition $(f_n)$ into $r = \lfloor \frac{M}{\varepsilon^2} \rfloor$
  sequences, each of whose Gramian is bounded below by $\tau \varepsilon^2$.
  Let $Z_1,\ldots,Z_r$
  be the corresponding partition of the sequence $(z_n)$. We also write $Z = \{z_n: n \in \mathbb{N}\}$.
  
  We can now argue similarly to the proof of the main result of \cite{AHM+17}.
  Here are the details.
  For each $j=1,\ldots,r$, Proposition $\ref{prop:row}$ yields a family of multipliers $(\varphi_z)_{z \in Z_j}$
  that form a row of multiplier norm at most one with $\varphi_z(w) = \tau^{1/2} \varepsilon \delta_{zw}$
  for all $z,w \in Z_j$.
  Let $\Phi$ be the row of all the $\varphi_z$ for $z \in Z$, in the order in which the points
  $z \in Z$ appear in the sequence $(z_n)$.
  Then $\Phi$ has multiplier norm at most $\sqrt{r}$ and the
  $n$-th entry of $\Phi(z_n)$ is $\tau^{1/2} \varepsilon$.
  By strong separation, there exists a diagonal multiplier $D$ of norm at most $1$ such that $D(z_n)$
  is the matrix whose $(n,n)$ entry is $\varepsilon$ and whose other entries are all zero.
  Let $\Psi = r^{-1/2} \Phi D$. Then $\Psi$ is a row multiplier of norm at most $1$
  such that $\Psi(z_n) = r^{-1/2} \tau^{1/2} \varepsilon^2 e_n^T$ for all $n$.
  Again by Proposition \ref{prop:row}, we find that the Gramian is bounded below by
  $r^{-1} \tau \varepsilon^4 \ge \frac{\tau}{M} \varepsilon^6$.
\end{proof}

\begin{remark}
  The analogue of Theorem \ref{thm:simply_interpolating} for general uniformly minimal sequences
  of unit vectors in a Hilbert space fails.
  That is, Gramians of uniformly minimal sequences need to be bounded below.

  For example, consider $G = (1 + \frac{1}{n}) I - P$,
  where $P$ is the matrix all of whose entries are $\frac{1}{n}$. Since $P$ is an orthogonal projection, $G$ is positive and invertible, and so $G$ is the Gramian of a sequence of unit vectors.
  A little computation shows that
  \begin{equation*}
    G^{-1} = \frac{n}{n+1} (I + n P),
  \end{equation*}
  so that the diagonal entries of $G^{-1}$ are at most $2$, but $\|G^{-1}\| \ge n$.
  Taking a direct sum of such matrices, we obtain a uniformly minimal sequence whose Gramian is not bounded below. Note that the Gramian is bounded in this example.
\end{remark}

  More relevantly for the subject of this article, one can ask which reproducing kernel Hilbert spaces
  have the property that any uniformly minimal sequence of normalized reproducing kernels has a Gramian that is bounded below.
  Theorem \ref{thm:simply_interpolating} shows that this is the case for normalized complete Pick spaces.
  Moreover, a theorem of Schuster and Seip \cite{SS98} shows that this is also true in the Bergman space on the disc; see also \cite[Section 5]{KS}.
  On the other hand, Schuster and Seip \cite[Section 4]{SS00} show that the Paley--Wiener space does not have this property;
  see also \cite[Theorem 3.1]{AH10}.

Theorem \ref{thm:simply_interpolating} yields a different proof of a theorem of Berndtsson \cite{Berndtsson85},
according to which every sequence in $\mathbb{B}_d$ that is uniformly separated for $H^2_d$ is interpolating for $H^\infty(\mathbb{B}_d)$.
Indeed, this now follows from the fact that uniform separation for $H^2_d$ implies strong separation for $H^2_d$
(see the discussion following Proposition \ref{prop:uniformly_separated})
and the next corollary.

\begin{corollary}
  \label{cor:Hinf}
  Every sequence in $\mathbb{B}_d$ that is strongly separated for $H^2_d$ is interpolating for $H^\infty(\mathbb{B}_d)$.
\end{corollary}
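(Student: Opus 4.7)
The plan is to combine Theorem \ref{thm:simply_interpolating} with a squaring trick that converts a row contractive multiplier into a bounded $H^\infty(\mathbb{B}_d)$-interpolant. Since $H^2_d$ is a normalized complete Pick space and $(z_n)$ is strongly separated, Theorem \ref{thm:simply_interpolating} makes $(z_n)$ simply interpolating, after which Propositions \ref{prop:SI_Gramian} and \ref{prop:row} produce a row multiplier $\Psi = (\psi_n) \in \Mult(H^2_d \otimes \ell^2, H^2_d)$ of norm at most one satisfying $\Psi(z_n) = \delta e_n^T$ for some $\delta > 0$. The two features of $\Psi$ that will drive the argument are the pointwise row-contraction estimate $\sum_n |\psi_n(z)|^2 \le 1$ on $\mathbb{B}_d$, obtained by applying $M_\Psi^*$ to the reproducing kernel at $z$, and the standard contractive inclusion $\Mult(H^2_d) \subset H^\infty(\mathbb{B}_d)$.

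Given any $(a_n) \in \ell^\infty$, I would define
\[
  f(z) := \frac{1}{\delta^2}\sum_n a_n\, \psi_n(z)^2.
\]
Absolute convergence at each $z$ and the bound $|f(z)| \le \|a\|_\infty / \delta^2$ are immediate from $\sum_n |\psi_n(z)|^2 \le 1$, while the identity $\psi_j(z_n)^2 = \delta^2 \delta_{jn}$ forces $f(z_n) = a_n$. For holomorphy, I would observe that the partial sums are uniformly bounded holomorphic functions on $\mathbb{B}_d$ converging pointwise to $f$, so Montel's theorem yields local uniform convergence and places $f$ in $H^\infty(\mathbb{B}_d)$.

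The hard part, and the reason squaring is built into the construction, is that $(\psi_n(z))$ is only guaranteed to lie in pointwise $\ell^2$, not $\ell^1$, so the naive combination $\sum_n a_n \psi_n(z)$ need not converge when $(a_n) \in \ell^\infty \setminus \ell^2$. Squaring upgrades pointwise $\ell^2$-summability of $(\psi_n(z))$ to pointwise $\ell^1$-summability of $(\psi_n(z)^2)$, while the interpolation data is preserved because $\delta_{jn}^2 = \delta_{jn}$. Beyond Theorem \ref{thm:simply_interpolating} itself, no further use of the complete Pick property or of the specific structure of the Drury--Arveson space is needed.
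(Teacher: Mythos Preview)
Your proposal is correct and follows essentially the same route as the paper: apply Theorem \ref{thm:simply_interpolating} to obtain a row multiplier $\Psi=(\psi_n)$ with $\Psi(z_n)=\delta e_n^T$, then interpolate $(a_n)\in\ell^\infty$ by $\sum_n a_n\psi_n^2$, using the pointwise $\ell^2$-bound on $(\psi_n(z))$. You supply a bit more detail than the paper (the Montel argument for holomorphy and the explanation of why squaring is needed), but the argument is the same.
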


\begin{proof}
  Let $(z_n)$ be a strongly separated sequence for $H^2_d$.
  Then $(z_n)$ is simply interpolating for $H^2_d$ by Theorem \ref{thm:simply_interpolating}.
  By Proposition \ref{prop:row}, there exists a bounded row multiplier $\Psi = (\psi_1,\psi_2,\psi_3,\ldots) \in \Mult(H^2_d \otimes \ell^2, H^2_d)$
  with $\psi(z_n) = e_n^T$. In particular, $ \sup_{z \in \mathbb{B}_d} \sum_{n} |\psi_n(z)|^2 < \infty$.
  Thus,
  \begin{equation*}
    \ell^\infty \to H^\infty(\mathbb{B}_d), \quad (\alpha_n) \mapsto \sum_{n} \alpha_n \psi_n^2,
  \end{equation*}
  is a linear operator of interpolation.
\end{proof}

Conversely, Berndtsson showed in \cite{Berndtsson85} that uniform separation with respect to $H^2_d$ is not necessary for interpolation
with respect to $H^\infty(\mathbb{B}_d)$.
In Section \ref{sec:ball}, we will construct a  sequence that is strongly separated for $H^2_d$, but not uniformly separated for $H^2_d$.
By Corollary \ref{cor:Hinf}, this sequence will then also by an interpolating sequence for $H^\infty(\mathbb{B}_d)$ that is not uniformly separated
for $H^2_d$.

  As mentioned in the introduction, the implication (WS) + (C) $\Rightarrow$ (IS) can also be proved
  without using the positive resolution of the Feichtinger conjecture, by making use of the column-row property \cite{Hartz20}.
  Thus, we ask:

\begin{question}
  Can Theorem \ref{thm:simply_interpolating} be proved without the use of the positive resolution of the Feichtinger conjecture?
\end{question}

\section{Uniformly separated sequences that are not interpolating}
\label{sec:us_not_is}

The goal of this section is to construct sequences that are uniformly separated, but not interpolating in $\mathcal{D}_a$ for $0 < a < 1$.
Throughout, we let
\begin{equation*}
  k_a(z,w) = \frac{1}{(1 - z \overline{w})^a} \quad (z,w \in \mathbb{D})
\end{equation*}
denote the reproducing kernel of $\mathcal{D}_a$.
Given a subset $Z = \{z_1,z_2,\ldots\} \subset \mathbb{D}$, we consider
the Gramian $G = (G_{kj})$ of the normalized reproducing kernels at the points in $Z$,
given by
\begin{equation*}
  G_{kj} = \frac{k_a(z_k,z_j)}{k_a(z_k,z_k)^{1/2} k_a(z_j,z_j)^{1/2}}.
\end{equation*}
We also call $G$ the $\mathcal{D}_a$-Gramian of the points in $Z$.

In a first step, we construct finite sequences for which the operator norm of the Gramian is much larger than the
$\ell^2$-norms of the columns of the Gramian.
The construction uses equidistributed points on circles centered at the origin.
As usual, if $A$ and $B$ are non-negative real numbers depending on some parameters, we write $A \lesssim B$ to mean that $A \le C B$ for some constant $C \ge 0$ that does not depend on the parameters, and $A \lesssim_a B$ to indicate that the constant $C$ may depend on the parameter $a$. We also write $A \simeq B$ to mean that $A \lesssim B$ and $B \lesssim A$,
and similarly $A \simeq_a B$ to indicate that the implied constants may depend on $a$.

\begin{lemma}
  \label{lem:equidistributed}
  Let $0 <a \le 1$, let $N \in \mathbb{N}$, $N \ge 2$ and let $1 - \frac{1}{N} \le r < 1$. Define
  \begin{equation*}
    Z_N = \{ r e^{\frac{2 \pi i n}{N}}: 0 \le n < N \}.
  \end{equation*}
  Let $G_N = (G_{kj})$ be the $\mathcal{D}_a$-Gramian of the points in $Z_N$.
  Then:
  \begin{enumerate}[label=\normalfont{(\alph*)}]
    \item 
   The points in $Z_N$ are weakly separated, with weak separation constant independent of $r$ and $N$.
 \item
   The following asymptotic relation holds:
      \[
        \sup_{j}
        \sum_{n\neq j}|G_{nj}|^2\simeq_{a}\begin{cases}
          (1-r)^{2a}N,\qquad&\text{if } 0 < a < \frac{1}{2}, \\
          (1-r)N\log N, \qquad &\text{if }a= \frac{1}{2}, \\
          (1-r)^{2 a} N^{2a}, \qquad& \text{if } \frac{1}{2}< a \le 1.\\
\end{cases}
\]
\item We have the lower bound
  $\|G_N\| \gtrsim_a N (1 - r)^a$.
  \end{enumerate}
\end{lemma}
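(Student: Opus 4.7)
The plan is to exploit that $|z_n|=r$ is constant, which makes $G_N$ a Hermitian circulant matrix with entries $G_{kj}=\delta^{a}(1-r^2 e^{2\pi i(k-j)/N})^{-a}$, where $\delta:=1-r^2$ satisfies $\delta\simeq 1-r$ and $\delta\le 2/N$. All three parts reduce to estimates on the single sequence $c_\ell:=G_{\ell,0}$. The workhorse identity will be
\[
|1-\rho e^{i\theta}|^2=(1-\rho)^2+4\rho\sin^2(\theta/2),
\]
which gives $|c_\ell|^2=\delta^{2a}\bigl(\delta^2+4r^2\sin^2(\pi\ell/N)\bigr)^{-a}$.

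For part (a) I only need the nearest-neighbour bound: when $\ell=1$, the estimates $\sin(\pi/N)\ge 2/N$, $r\ge 1/2$ and $\delta\le 2/N$ give $4r^2\sin^2(\pi/N)\ge\delta^2$, so the denominator is at least $2^{a}\delta^{2a}$ and $|c_1|^2\le 2^{-a}<1$. Values $|c_\ell|$ for $\ell\ne 1,N-1$ are even smaller since $\sin^2(\pi\ell/N)$ is minimised at the endpoints, so the pseudohyperbolic distances $d(z_k,z_j)=\sqrt{1-|G_{kj}|^2}$ are uniformly bounded below, independently of $r$ and $N$.

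Part (b) is the main analytic step. Rotational invariance makes $\sum_{n\ne j}|G_{nj}|^2$ independent of $j$ and equal to $\sum_{\ell=1}^{N-1}|c_\ell|^2$. Using $|\sin(\pi\ell/N)|\simeq\min(\ell,N-\ell)/N$ and monotonicity of the summand, I will compare this with $N\int_{1/N}^{1/2}\delta^{2a}(\delta^2+u^2)^{-a}\,du$. The substitution $u=\delta t$ converts the integral to $N\delta\int_{A}^{B}(1+t^2)^{-a}\,dt$ with $A=1/(N\delta)$ and $B=1/(2\delta)$; crucially, the hypothesis $\delta\le 2/N$ forces $A\ge 1/2$, so $(1+t^2)^{-a}\simeq_a t^{-2a}$ throughout the integration range. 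The three regimes then fall out directly from the behaviour of $\int_A^B t^{-2a}\,dt$: for $a<1/2$ the upper limit dominates, giving $B^{1-2a}\simeq\delta^{2a-1}$ and total $\delta^{2a}N$; for $a=1/2$ the integral is $\log(B/A)\simeq\log N$, giving $\delta N\log N$; and for $a>1/2$ the lower limit dominates, giving $A^{1-2a}=(N\delta)^{2a-1}$ and total $\delta^{2a}N^{2a}$. I expect the most delicate point to be justifying the Riemann-sum comparison uniformly near $\ell=1$, where both the summand and the integrand attain their peak values; monotonicity of $u\mapsto(\delta^2+u^2)^{-a}$ together with the constraint $\delta\lesssim 1/N$ will make the sandwich clean.

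For part (c) I test against the all-ones vector $v=(1,\ldots,1)\in\mathbb{C}^N$. Since $G_N$ is positive semidefinite, $\|G_N\|\ge\langle G_N v,v\rangle/\|v\|^2=\sum_\ell c_\ell$, and the circulant structure lets me evaluate this sum in closed form. Expanding the binomial series $(1-z)^{-a}=\sum_{n\ge 0}\binom{a+n-1}{n}z^n$ and using the orthogonality $\sum_{\ell=0}^{N-1}e^{2\pi in\ell/N}=N\cdot\mathbf{1}_{N\mid n}$,
\[
\sum_{\ell=0}^{N-1}(1-r^2e^{2\pi i\ell/N})^{-a}=N\sum_{j\ge 0}\binom{a+jN-1}{jN}r^{2jN}\ge N,
\]
since every coefficient is positive for $a>0$ and the $j=0$ term alone already contributes $N$. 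Multiplying by the prefactor $\delta^a$ gives $\|G_N\|\ge N\delta^a\simeq_a N(1-r)^a$, as claimed.
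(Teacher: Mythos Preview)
Your proposal is correct and follows essentially the same strategy as the paper: reduce to the single column $(c_\ell)$ by rotational symmetry, use that $|1-r^2 e^{2\pi i\ell/N}|\simeq \ell/N$ for $1\le\ell\le\lfloor N/2\rfloor$ (your version of this is $\delta^2+u^2\simeq u^2$ once $A\ge 1/2$), and split $\sum \ell^{-2a}$ according to whether $2a$ is below, at, or above the critical exponent $1$.

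The differences are cosmetic. For (a) the paper passes through the pseudohyperbolic metric $\rho$ and the equivalence $d_a\simeq d_1$, whereas you bound $|G_{10}|^2$ directly via the identity $|1-\rho e^{i\theta}|^2=(1-\rho)^2+4\rho\sin^2(\theta/2)$; both give the same conclusion. For (b) the paper writes the sum as $(1-r^2)^{2a}N^{2a}\sum_{n=1}^{\lfloor N/2\rfloor}n^{-2a}$ and reads off the three regimes directly, while you reach the same sum after an integral comparison and the substitution $u=\delta t$; your acknowledged boundary term at $\ell=1$ is of order $(N\delta)^{2a}$, which is indeed dominated by the main term in every regime. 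For (c) the paper invokes the identity $\|G_N\|=\sup_{\|f\|\le 1}\sum_n |f(z_n)|^2/k_a(z_n,z_n)$ and tests $f\equiv 1$ to obtain $\|G_N\|\ge N(1-r^2)^a$ in one line; your Rayleigh-quotient test against the all-ones vector is the same test viewed on the $\ell^2$ side (since $T(1)$ is a constant vector), and your binomial expansion plus root-of-unity orthogonality recovers exactly this bound from the $j=0$ term. The paper's route is a touch shorter here, but yours is self-contained and does not rely on the Carleson-constant interpretation of $\|G_N\|$.
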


\begin{proof}
  We will use the estimate 
  \begin{equation*}
    |1 - z \overline{w}| \simeq \max \Big\{ 1 - |z|, 1 - |w|, \Big| \frac{z}{|z|} - \frac{w}{|w|} \Big| \Big\} \quad (z,w \in \overline{\mathbb{D}}\setminus\{ 0 \}).
  \end{equation*}
When $w=1$ the above estimate is an exercise in plane geometry. For general points $z,w \in \overline{\mathbb{D}}\setminus \{ 0 \}$ it follows from the previous case and the estimate
\[ \max\{1-|z|, 1-|w|\}\leq 1-|z w| \leq 2 \max\{ 1-|z|, 1-|w| \}.  \]
  If $1 \le j \le \lfloor \frac{N}{2} \rfloor$, then
  $|1 - e^{\frac{2 \pi i j}{N}}| \simeq \frac{j}{N}$, so since $1 -r \le \frac{1}{N}$,
  it follows that
  \begin{equation}
    \label{eqn:estimate_equidist}
    |1 - r^2 e^{\frac{2 \pi i j}{N}}| \simeq \frac{j}{N} \quad (1 \le j \le \lfloor \frac{N}{2} \rfloor).
  \end{equation}

  (a) Let $d_a$ be the metric induced by the kernel $k_a$.
  Then
  \[
    d_a^2(r e^{\frac{2 \pi i n}{N}}, r e^{\frac{2 \pi i j}{N}}) = 1 - |G_{n j}|^2,
  \]
  and
  \begin{equation}
  \label{eqn:circular_gramian}
    |G_{nj}|^2 = \frac{|1 - r^2|^{2 a}}{|1 - r^2 e^{\frac{2 \pi i (n-j)}{N}}|^{2 a}}.
  \end{equation}
  Thus, the minimal $d_a$-distance between two points in $Z$ is $d_a(r, r e^{\frac{2 \pi i}{N}})$.
  The metric $d_a$ is equivalent to the metric $d_1$, which in turn is the pseudohyperbolic metric $\rho$,
  given by
  \begin{equation*}
    \rho(z,w) = \Big| \frac{z - w}{1 - z \overline{w}} \Big|.
  \end{equation*}
  Since
  \begin{equation*}
    \rho(r, r e^{\frac{2 \pi i}{N}}) = \frac{r|1 - e^{\frac{2 \pi i}{N}}|}{|1 - r^2 e^{\frac{2 \pi i}{N}}|}
  \end{equation*}
  is bounded below by \eqref{eqn:estimate_equidist}, part (a) follows.
  
  (b)
  Using Equation \eqref{eqn:circular_gramian}, we find that
  \begin{equation*}
    \sup_{j} \sum_{n \neq j} |G_{nj}|^2 = \sum_{n=1}^{N-1} |G_{n0}|^2 \simeq \sum_{n=1}^{\lfloor \frac{N}{2} \rfloor} |G_{n 0}|^2.
  \end{equation*}
  Relation \eqref{eqn:estimate_equidist} implies that
  \begin{equation*}
    |G_{n0}|^2 \simeq_a (1 - r^2)^{2 a} N^{2 a} \frac{1}{n^{2 a}},
  \end{equation*}
  so
  \begin{equation*}
    \sup_{j} \sum_{n \neq j} |G_{nj}|^2 \simeq_a (1 - r^2)^{2 a} N^{2 a}
    \sum_{n=1}^{\lfloor \frac{N}{2} \rfloor}  \frac{1}{n^{2 a}},
  \end{equation*}
  from which the first claim follows.

  (c) Let $z_n = r e^{\frac{2 \pi i n}{N}}$ and recall from Proposition \ref{prop:BG} that
  \begin{equation*}
    \|G_N\| = \sup \Big\{ \sum_{n=0}^{N-1} \frac{|f(z_n)|^2}{k_a(z_n,z_n)} : \|f\|_{\mathcal{D}_a} \le 1 \Big\}
  \end{equation*}
  The lower bound for $\|G_N\|$ follows by choosing $f=1$.
\end{proof}

By choosing appropriate values of $r$ and $N$ depending on $a$ in Lemma \ref{lem:equidistributed},
we can find finite sequences for which the $\ell^2$ norms of the columns of the Gramian are uniformly bounded, but
the operator norm of the Gramian tends to infinity.
We will use biholomorphic automorphisms of the disc to assemble the finite sequences
into one infinite sequence.
Since we will require it later, we directly work on the unit ball.
Let $\Aut(\mathbb{B}_d)$ denote the group of biholomorphic automorphisms of $\mathbb{B}_d$.
For background material on automorphisms, see \cite[Section 2.2]{Rudin1980}.
Let
\begin{equation*}
  K_a(z,w) = \frac{1}{(1 - \langle z,w \rangle)^a } \quad (z,w \in \mathbb{B}_d).
\end{equation*}

\begin{lemma}
  \label{lem:automorphism}
  Let $0 < a \le 1$, let $z_1, \ldots,z_N,w_1,\ldots,w_M \in \mathbb{B}_d$ and let $\varepsilon > 0$.
  Then there exists $\varphi \in \Aut(\mathbb{B}_d)$ such that
\[
\sum_{n=1}^N\sum_{m=1}^M\frac{|K_a(\varphi(z_n), w_m)|^2}{K_a(\varphi(z_n), \varphi(z_n))K_a(w_m, w_m)}<\varepsilon.
\]
\end{lemma}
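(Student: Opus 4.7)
The plan is to produce $\varphi$ as a member of a one-parameter family of automorphisms that push all the $z_n$ simultaneously toward a single boundary point of $\mathbb{B}_d$, while the $w_m$ remain fixed in the interior. Since there are only finitely many pairs $(n,m)$, we only need each individual summand to tend to $0$ along the family, after which the desired $\varphi$ is obtained by taking any parameter sufficiently close to the boundary.

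First I would rewrite the quantity of interest in explicit form:
\[
  \frac{|K_a(z,w)|^2}{K_a(z,z)K_a(w,w)} = \frac{(1-|z|^2)^a (1-|w|^2)^a}{|1-\langle z,w\rangle|^{2a}}.
\]
Fix any unit vector $\xi \in \partial \mathbb{B}_d$, and for $t \in [0,1)$ let $\varphi_t \in \Aut(\mathbb{B}_d)$ denote the standard involutive automorphism with $\varphi_t(0) = t\xi$ (as in \cite[Section 2.2]{Rudin1980}). Using the explicit formula
\[
  \varphi_t(z) = \frac{t\xi - P_\xi z - \sqrt{1-t^2}\, Q_\xi z}{1 - t\langle z, \xi\rangle},
\]
where $P_\xi$ is orthogonal projection onto $\mathbb{C}\xi$ and $Q_\xi = I - P_\xi$, one checks directly that for each fixed $z \in \mathbb{B}_d$,
\[
  \varphi_t(z) \to \xi \qquad \text{as } t \to 1^-,
\]
since the denominator converges to the nonzero limit $1 - \langle z,\xi\rangle$, the first numerator term tends to $(1-\langle z,\xi\rangle)\xi$, and $\sqrt{1-t^2}\, Q_\xi z \to 0$.

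Consequently, for each fixed $n$ and $m$, as $t \to 1^-$ we have $1 - |\varphi_t(z_n)|^2 \to 0$, and
\[
  |1 - \langle \varphi_t(z_n), w_m\rangle| \;\longrightarrow\; |1 - \langle \xi, w_m\rangle| \;\geq\; 1 - |w_m| \;>\; 0.
\]
Since $a > 0$, it follows that
\[
  \frac{(1-|\varphi_t(z_n)|^2)^a (1-|w_m|^2)^a}{|1 - \langle \varphi_t(z_n), w_m\rangle|^{2a}} \;\longrightarrow\; 0.
\]
As the double sum has only $NM$ terms, we conclude that the entire expression tends to $0$ as $t \to 1^-$. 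Picking any $t$ close enough to $1$ and setting $\varphi = \varphi_t$ yields the desired automorphism.

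There is no serious obstacle in this argument; the only point requiring care is verifying $\varphi_t(z_n) \to \xi$ from the explicit formula, after which both the vanishing of the numerator and the non-vanishing of the denominator are routine consequences of the Cauchy--Schwarz inequality $|\langle \xi, w_m\rangle| \le |w_m| < 1$.
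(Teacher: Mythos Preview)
Your proof is correct. The verification that $\varphi_t(z)\to\xi$ is accurate, and once that is in hand the numerator of each summand tends to $0$ while the denominator stays bounded below by $(1-|w_m|)^{2a}>0$; finiteness of the index set finishes the argument.

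Your route differs from the paper's. The paper first reduces to $a=1$, then works with the automorphism-invariant Bergman metric $\beta$: it picks a point $x$ at Bergman distance greater than $\max_m\beta(0,w_m)+\tfrac12\log(4/\delta)$ from every $z_n$, takes $\varphi$ to be an automorphism sending $x$ to $0$, and uses the reverse triangle inequality for $\beta$ to conclude that each $\varphi(z_n)$ is far from each $w_m$. Your argument is more elementary and more explicit: you never invoke the Bergman metric or automorphism invariance, and you exhibit a concrete one-parameter family $\varphi_t$ along which the limit can be read off directly from Rudin's formula. The paper's approach has the advantage of being coordinate-free and immediately quantitative (the $\delta$ appears explicitly in the choice of $x$), while yours is shorter and needs only the explicit form of the involutive automorphisms.
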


\begin{proof}
  Since $M,N$ are fixed and since the function $t \mapsto t^a$ is continuous at $0$,
  it suffices to show that for each $\delta > 0$, there exists $\varphi \in \Aut(\mathbb{B}_d)$
  with
  \begin{equation*}
    \frac{|K_1(\varphi(z_n), w_m)|^2}{K_1(\varphi(z_n), \varphi(z_n))K_1(w_m, w_m)} \le \delta \quad \text{ for all } 1 \le n \le N, 1 \le m \le M.
  \end{equation*}
  To this end, let
  \[
    \rho(z, w)=\sqrt{1-\frac{|K_1(z, w)|^2}{K_1(z, z)K_1(w, w)}} \quad (z, w\in\B_d)
\]
be the pseudo-hyperbolic distance on $\B_d$, and let
\begin{equation*}
\beta(z, w)=\frac{1}{2}\log\frac{1+\rho(z, w)}{1-\rho(z, w)} \quad (z, w\in\B_d)
\end{equation*}
be the Bergman metric. Then $\beta$ is a metric, which is automorphism
invariant in the sense that
$\beta(\varphi(z),\varphi(w)) = \beta(z,w)$ for all $z,w \in \mathbb{B}_d$
and all $\varphi \in \Aut(\mathbb{B}_d)$. Moreover,
for each fixed $z \in \mathbb{B}_d$, we have $\beta(z,w) \to \infty$ as $\|w\| \to 1$.
These facts can be found for instance in \cite{Zhu05}, see Lemma 1.2 and Section 1.5 there.

Now, let $R = \max \{ \beta(0,w_m): 1 \le m \le M \}$. We may find $x \in \mathbb{B}_d$ with
\begin{equation*}
  \beta(z_n,x) \ge R + \frac{1}{2} \log \frac{4}{\delta} \quad \text{ for all } 1 \le n \le N.
\end{equation*}
Let $\varphi \in \Aut(\mathbb{B}_d)$ be an automorphism that maps $x$ to $0$. Then by the reverse triangle inequality for $\beta$,
we find that
\begin{equation*}
  \beta(\varphi(z_n),w_m) \ge \beta(\varphi(z_n),0) - \beta(0, w_m)
  = \beta(z_n,x) - \beta(0,w_n) \ge \frac{1}{2} \log \frac{4}{\delta}.
\end{equation*}
Since $\beta \le \frac{1}{2} \log \frac{4}{1 - \rho^2}$, this estimate shows that
\begin{equation*}
  \frac{1}{1- \rho^2(\varphi(z_n),w_m)} \ge \frac{1}{4} e^{2 \beta(\varphi(z_n),w_m)} \ge \frac{1}{\delta},
\end{equation*}
from which the claim follows.
\end{proof}

The next lemma now allows us to assemble finite sequences into one infinite sequence.

\begin{lemma}
  \label{lem:assembly}
  Let $0 < a \le 1$. For $N \in \mathbb{N}$, let $Z_N$ be a finite sequence in $\mathbb{B}_d$
  and let $G_N$ be the Gramian of the points in $Z_N$ with respect to the kernel $k_a$.
  Let $\varepsilon > 0$.
  
  Then, for all $N \in \mathbb{N}$, there exists $\varphi_N \in \Aut(\mathbb{B}_d)$ such that the Gramian of the points in $\bigcup_{N \in \mathbb{N}} \varphi_N(Z_N)$ has the form
  \begin{equation*}
    G =  D \Big( \bigoplus_{N \in \mathbb{N}} G_N \Big) D^* + R,
  \end{equation*}
  where $D$ is a diagonal matrix with unimodular entries, and $R$ is a Hilbert--Schmidt operator
  with $\|R\|_{\operatorname{HS}} < \varepsilon$. Moreover, the only non-zero entries of $R$ occur
  outside of the block diagonal corresponding to the block diagonal operator $\bigoplus_{N \in \mathbb{N}} G_N$.
\end{lemma}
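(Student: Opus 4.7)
The plan is to exploit the fact that every $\varphi\in\Aut(\mathbb{B}_d)$ acts on the kernel $K_a$ by a ``gauge'' transformation
\[ K_a(\varphi(z),\varphi(w)) = c_\varphi(z)\,\overline{c_\varphi(w)}\,K_a(z,w), \]
with some non-vanishing function $c_\varphi$. Indeed, for the involution $\varphi_p$ exchanging $0$ and $p\in\mathbb{B}_d$, the well-known identity
\[ 1-\langle \varphi_p(z),\varphi_p(w)\rangle = \frac{(1-\|p\|^2)(1-\langle z,w\rangle)}{(1-\langle z,p\rangle)(1-\langle p,w\rangle)} \]
together with the fact that $1-\langle z,p\rangle$ takes values in the right half-plane allows one to raise both sides to the $(-a)$-th power using the principal branch, yielding $c_{\varphi_p}(z)=(1-\|p\|^2)^{-a/2}(1-\langle z,p\rangle)^{a}$. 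Since unitaries leave $K_a$ invariant, composition extends the formula to arbitrary $\varphi\in\Aut(\mathbb{B}_d)$. Setting $d_\varphi(z)=c_\varphi(z)/|c_\varphi(z)|$ and letting $D_\varphi$ be the diagonal unitary with entries $d_\varphi(z_n)$, the Gramian of the normalized reproducing kernels at $\varphi(Z)$ obeys $G(\varphi(Z)) = D_\varphi\,G(Z)\,D_\varphi^*$. Applied blockwise this will produce the diagonal matrix $D$ claimed in the statement.

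I would then construct $\varphi_N$ recursively. Set $\varphi_1 = \operatorname{id}$ and, given $\varphi_1,\ldots,\varphi_{N-1}$, apply Lemma \ref{lem:automorphism} with $\{z_1,\ldots,z_N\}$ the points of $Z_N$, $\{w_1,\ldots,w_M\}$ the (finite) collection of all points in $\bigcup_{j<N}\varphi_j(Z_j)$, and error parameter $\varepsilon^{2}\,2^{-(N+1)}$, obtaining $\varphi_N\in\Aut(\mathbb{B}_d)$ with
\[ \sum_{z\in\varphi_N(Z_N)}\sum_{w\in\bigcup_{j<N}\varphi_j(Z_j)} \frac{|K_a(z,w)|^2}{K_a(z,z)\,K_a(w,w)} < \frac{\varepsilon^2}{2^{N+1}}. \]
Let $D = \bigoplus_N D_{\varphi_N}$ and define $R$ by $G\bigl(\bigcup_N\varphi_N(Z_N)\bigr) = D\bigl(\bigoplus_N G_N\bigr)D^* + R$. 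By the gauge formula applied blockwise, $R$ vanishes on the block diagonal, while off the block diagonal its entries are precisely the cross-block normalized kernel values.

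Symmetrizing over $N\ne M$ then gives
\[ \|R\|_{\operatorname{HS}}^2 \;=\; 2\sum_{N\ge 2}\;\sum_{z\in\varphi_N(Z_N)}\sum_{w\in\bigcup_{j<N}\varphi_j(Z_j)} \frac{|K_a(z,w)|^2}{K_a(z,z)\,K_a(w,w)} \;<\; 2\sum_{N\ge 2}\frac{\varepsilon^2}{2^{N+1}} \;<\; \varepsilon^2, \]
so $R$ is Hilbert--Schmidt with $\|R\|_{\operatorname{HS}}<\varepsilon$, as required. The main technical point is establishing the gauge formula with a single-valued choice of $c_\varphi$ when $a$ is not an integer (where branch issues for the $a$-th power could in principle be delicate); once this is in hand the rest of the argument is just a routine inductive application of Lemma \ref{lem:automorphism} combined with the elementary Hilbert--Schmidt tail estimate above.
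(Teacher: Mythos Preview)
Your argument is correct and essentially identical to the paper's: both choose $\varphi_N$ inductively via Lemma~\ref{lem:automorphism} with error $\varepsilon^2 2^{-(N+1)}$, sum the off-diagonal blocks to bound $\|R\|_{\operatorname{HS}}$, and use the same Rudin identity (raised to the $a$-th power via the principal branch on the right half-plane) to show each $G(\varphi_N(Z_N))$ equals $D_N G_N D_N^*$ for a unimodular diagonal $D_N$. The only difference is organizational: you front-load the gauge formula while the paper defers it to the end.
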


\begin{proof}
  We choose the automorphisms $\varphi_N$ inductively using Lemma \ref{lem:automorphism}.
  Let $\varphi_1$ be the identity.
 If $\varphi_{1},\ldots,\varphi_{N-1}$ have been chosen,
let $W_N = \bigcup_{n=1}^{N-1} \varphi_n(Z_n)$, and
apply Lemma \ref{lem:automorphism} to find $\varphi = \varphi_N \in \Aut(\mathbb{B}_d)$ such that
\begin{equation}
  \label{eqn:off_diagonal}
  \sum_{z \in Z_N} \sum_{w \in W_N} \frac{|K_a(\varphi(z), w)|^2}{K_a(\varphi(z),\varphi(z)) K_a(w,w)} < \varepsilon^2 2^{-N-1}.
\end{equation}
Let $\widetilde{G}_N$ be the Gramian of the points in $\Phi_N(G_N)$.
Then the Gramian $G$ of the points in $\bigcup_{N \in \mathbb{N}} \varphi_N(Z_N)$ is of the form
\begin{equation*}
  G = \bigoplus_{N \in \mathbb{N}} \widetilde{G}_N + R,
\end{equation*}
where, thanks to \eqref{eqn:off_diagonal},
\begin{equation*}
  \|R\|_{\operatorname{HS}}^2 = 2 \sum_{N=1}^\infty \sum_{z \in \varphi_N(Z_N)} \sum_{w \in W_N} \frac{|K_a(z,w)|^2}{K_a(z,z) K_a(w,w)} < \varepsilon^2.
\end{equation*}

Finally, a well-known identity for ball automorphisms (see \cite[Theorem 2.2.5]{Rudin1980})
shows that
\begin{equation*}
  \frac{(1 - \|\varphi(z)\|^2)^{1/2} (1 - \|\varphi(w)\|^2)^{1/2}}{1 - \langle \varphi(z), \varphi(w) \rangle }
  = \omega(z) \overline{\omega(w)}   \frac{(1 - \|z\|^2)^{1/2} (1 - \|w\|^2)^{1/2}}{1 - \langle z, w \rangle },
\end{equation*}
where
\begin{equation*}
  \omega(z) = \frac{1 - \langle z,\varphi^{-1}(0) \rangle }{|1 - \langle z,\varphi^{-1}(0) \rangle| }.
\end{equation*}
From this and from the fact that $(s t)^a = s^a t^a$ if $s$ and $t$ both have positive real part, it follows that
\begin{equation*}
  \frac{K_a(\varphi(z),\varphi(w))}{K_a(\varphi(z),\varphi(z))^{1/2} K_a(\varphi(w),\varphi(w))^{1/2}}
  = \omega(z)^a \overline{\omega(w)^a}
  \frac{K_a(z,w)}{K_a(z,z)^{1/2} K_a(w,w)^{1/2}}.
\end{equation*}
Since $\omega^a$ takes unimodular values, we see that
$G_N$ and $\widetilde{G}_N$ are unitarily equivalent via a diagonal matrix with unimodular entries.
\end{proof}

We are now ready to construct a uniformly separated sequence that is not interpolating in $\mathcal{D}_a$.
This establishes Theorem \ref{thm:counterexamples_intro} (a) for $\mathcal{D}_a$.

\begin{proposition}
  \label{prop:D_a_unif_sep_not_int}
  Let $0 < a < 1$. There exists a sequence $(z_n)$ in $\mathbb{D}$ that is uniformly separated, but not interpolating for $\mathcal{D}_a$.
\end{proposition}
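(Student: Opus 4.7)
The plan is to build the required sequence by gluing the finite equidistributed configurations of Lemma \ref{lem:equidistributed} via Lemma \ref{lem:assembly}, choosing the radius $r_N$ of the $N$-th configuration so that the column $\ell^2$-sums of $G_N$ are uniformly bounded in $N$ while $\|G_N\|\to\infty$. By Proposition \ref{prop:uniformly_separated}, the first condition together with weak separation delivers (US); meanwhile the growth of $\|G_N\|$ will force $\|G\|=\infty$, violating the Carleson measure condition by Proposition \ref{prop:BG} and hence, through the Marshall--Sundberg equivalence (IS)$\Leftrightarrow$(C)+(SI), preventing interpolation.

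The parameter selection uses the three regimes of Lemma \ref{lem:equidistributed}(b). For $0<a<\tfrac12$ I set $1-r_N\simeq N^{-1/(2a)}$; since $1/(2a)>1$ this meets the constraint $1-r_N\leq 1/N$ for $N$ sufficiently large, and part (c) yields $\|G_N\|\gtrsim_a N(1-r_N)^a\simeq N^{1/2}\to\infty$. For $a=\tfrac12$, I take $1-r_N\simeq (N\log N)^{-1}$, which also satisfies the constraint and gives $\|G_N\|\gtrsim N^{1/2}(\log N)^{-1/2}\to\infty$. For $\tfrac12<a<1$, the choice $1-r_N=1/N$ keeps the column sums bounded and produces $\|G_N\|\gtrsim_a N^{1-a}\to\infty$. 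In all three regimes, Lemma \ref{lem:equidistributed}(a) provides a weak-separation constant for each block $Z_N$ that is independent of $N$.

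Having fixed the blocks $(Z_N)$, I apply Lemma \ref{lem:assembly} with some $\varepsilon\in(0,1)$ to get automorphisms $\varphi_N\in\Aut(\mathbb{D})$ and form $Z:=\bigcup_N \varphi_N(Z_N)$, whose Gramian decomposes as $G=DBD^{*}+R$ with $B=\bigoplus_N G_N$, $D$ diagonal unitary, and $\|R\|_{\mathrm{HS}}<\varepsilon$. Since $D$ is diagonal, the column $\ell^2$-norms of $DBD^{*}$ coincide with those of the $G_N$, which are uniformly bounded; the column $\ell^2$-norms of $R$ are at most $\|R\|_{\mathrm{HS}}<\varepsilon$, so $G\colon\ell^2\to\ell^\infty$ is bounded. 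For weak separation, the normalized kernel ratio is invariant under automorphisms of $\mathbb{D}$ up to a unimodular factor (this is exactly the identity exploited in the proof of Lemma \ref{lem:assembly}), so each $\varphi_N(Z_N)$ inherits the weak-separation constant of $Z_N$; between two distinct blocks the relevant Gramian entries lie in $R$ and are therefore bounded in modulus by $\varepsilon<1$, so those pairs are weakly separated as well. Proposition \ref{prop:uniformly_separated} then gives (US), while $\|G\|\geq \|DBD^{*}\|-\|R\|_{\mathrm{HS}}=\sup_N\|G_N\|-\varepsilon=\infty$ kills (C) by Proposition \ref{prop:BG}, so $Z$ cannot be interpolating.

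I expect the main friction to be purely parameter bookkeeping: simultaneously respecting the constraint $1-r_N\leq 1/N$ of Lemma \ref{lem:equidistributed}, keeping the column sums uniformly bounded, and forcing $\|G_N\|\to\infty$, across the full range $a\in(0,1)$. The transition at $a=\tfrac12$ is the most delicate because of the logarithmic correction in the column-sum estimate, but the choice $1-r_N\simeq (N\log N)^{-1}$ still accommodates all three requirements.
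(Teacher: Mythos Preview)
Your proposal is correct and follows essentially the same approach as the paper: the same three-regime choice of $r_N$ (up to your use of $\simeq$ versus exact values), the same use of Lemma~\ref{lem:equidistributed} for the within-block estimates, the same application of Lemma~\ref{lem:assembly} to glue the blocks, and the same verification of weak separation (within-block from Lemma~\ref{lem:equidistributed}(a), cross-block from the Hilbert--Schmidt bound on $R$). The paper simply fixes $\varepsilon=\tfrac12$ and exact values of $r_N$, but this is cosmetic.
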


\begin{proof}
  Let $N \in \mathbb{N}$ with $N \ge 2$. Set
\begin{equation}
\label{eqn:rnDa}
r_N = \begin{cases}
  1-\frac{1}{N^\frac{1}{2a}},\qquad& \text{if } 0 < a< \frac{1}{2}, \\
  1-\frac{1}{N\log N},\qquad & \text{if } a=1/2,\\
  1-\frac{1}{N},\qquad& \text{if } \frac{1}{2} < a < 1,
\end{cases}
\end{equation}
and define
\begin{equation*}
  Z_N = \{ r_N e^{\frac{2 \pi i n}{N}} : 0 \le n < N-1 \}.
\end{equation*}
Let $G_N$ denote the $\mathcal{D}_a$-Gramian of the points in $Z_N$.
The Lemma \ref{lem:equidistributed} shows that the $\ell^2$ norms of the columns of $G_N$ are uniformly bounded in $N$,
but $\|G_N\| \to \infty$ as $N \to \infty$.

To assemble the finite sequences $(Z_N)$ into one sequence, we
apply Lemma \ref{lem:assembly} with $\varepsilon = \frac{1}{2}$ and consider the points in $Z = \bigcup_{N \in \mathbb{N}} \varphi(Z_N)$. Then then Gramian $G$ of the points in $Z$ is unbounded, but the $\ell^2$-norms of the columns
of $G$ are uniformly bounded.

Finally, it remains to check weak separation, i.e.\ that there exists $\delta > 0$ with $|G_{kj}| \le 1 - \delta$ for all $k \neq j$, where $G = (G_{kj})$. If $w \in \varphi_N(Z_N)$ and $z \in \varphi_N(Z_M)$ for $N \neq M$, then the modulus of the corresponding entry of $G$ is uniformly bounded away from $1$ by our
choice $\varepsilon = \frac{1}{2}$.
If $w,z \in \varphi_N(Z_N)$ with $w \neq z$, then the modulus of the corresponding entry of $G$ is uniformly bounded away from $1$ by Lemma \ref{lem:equidistributed} (a).
Uniform separation then follows from Proposition \ref{prop:uniformly_separated}.
\end{proof}

\section{Simply interpolating sequences that generate an infinite measure}
\label{sec:si_im}

The goal of this section is to construct examples of strongly separated sequences in $\mathcal{D}_a$ that generate an infinite
measure. In particular, these sequences are not uniformly separated by Propositions \ref{prop:uniformly_separated} and \ref{prop:US_FM}.
For $0 < a \le \frac{1}{2}$, strongly separated sequences that are not uniformly separated can also be constructed
by a procedure similar to the one in the preceding section, as the next example shows.
In this example, simple interpolation can be checked directly without appealing to Theorem \ref{thm:simply_interpolating}.
\begin{example}
\label{example:circulant}
 Let $0<a\leq1/2$, and choose, for all $N$ in $\N$, $N$ equi-distributed points on the circle of radius $r_N=1-1/N$:
 \[
  Z_N = \left\{ \left(1-\frac{1}{N}\right) e^{\frac{2 \pi i n}{N}} : 0 \le n < N \right\}.
 \]
 Let $G_N$ be their Gramian with respect to the kernel $k_a$. By Lemma \ref{lem:equidistributed} (b), the supremum of the $\ell^2$-norms of the columns of $G_N$ tend to infinity as $N$ tends to infinity.
 
 On the other hand, one can estimate directly that the $G_N$ are uniformly bounded below thanks to equi-distribution and the fact that $k_a$ is invariant under rotations. Indeed, such symmetry implies that $G$ is a \emph{circulant} matrix, that is, the $n$-th column of $G$  is $\sigma^n(v)$, where $v$ is the first column of $G_N$ and $\sigma$ is the (cyclic) shift of the coordinates in $\mathbb{R}^N$.
 The eigenvalues of the self-adjoint circulant matrix $G_N$ can be computed as
 \[
 \lambda_j=P(w^j), \qquad j=0\dots, N-1
 \]
 where $w=e^\frac{2\pi i}{N}$ and $P(z)=\sum_{n=0}^{N-1}G_{n0}z^n$ is the polynomial whose ordered coefficients are the coordinates of the first column of $G$; see for instance \cite{circulant}.
 Thus,
 \[
  \lambda_j = k_a(r_N,r_N)^{-1} \sum_{n=0}^{N-1} k_a(r_N,r_N w^n) w^{nj}.
 \]
Writing $k_a(z, w)=\sum_{m=0}^\infty c_m (z\overline{w})^m$,
 where $c_m = \binom{-a}{m}$ and using the standard orthogonality relations
 for $N$-th roots of unity, one has
 \[
  \sum_{n=0}^{N-1}k_a(r_N, r_Nw^n)w^{nj}
  = \sum_{m=0}^\infty c_m r_N^{2m} \sum_{n=0}^{N-1} w^{n (j-m)}
  = N \sum_{m=0}^\infty c_{Nm +j} r_N^{2(Nm + j)}.
 \]
Since $a<1$, the coefficients $(c_m)_{m}$ are decreasing, from which
we conclude that
\[
\lambda_{N-1} < \lambda_{N-2} < \cdots < \lambda_0.
\]
(We mention in passing that $\|G_N\| = \lambda_0 \ge N k_a(r_N,r_N)^{-1} \simeq_a N^{1-a}$, so that we recover Lemma \ref{lem:equidistributed} (c) in this case.)
Since $c_m \simeq_a (m+1)^{a-1}$, we may estimate the least eigenvalue $\lambda_{N-1}$ of $G_N$ as
\begin{align*}
    \lambda_{N-1} &= N k_a(r_N,r_N)^{-1} \sum_{m=0}^\infty c_{N m + N -1} r_N^{2( N m + N-1)} \\
&\simeq_a~{N(1-r_N)^{a}}\sum_{m=0}^\infty(N(m+1))^{a-1}r_N^{2N(m+1)}\\
&=\sum_{m=0}^\infty (m+1)^{a-1} r_N^{2N(m+1)} \\
&\simeq_a \frac{r_N^{2N}}{(1 - r_N^{2N})^a}.
\end{align*}
Since $r_N^N \to e^{-1}$ as $N \to \infty$, it follows that there exists $\varepsilon > 0$ such that $G_N$ is bounded below by $2 \varepsilon$ for all $N$.

  Given such $\varepsilon$, apply Lemma \ref{lem:assembly} and assemble the finite point configurations into one sequence $Z:=(\varphi_N(Z_N))_{N\in\N}$ having Gramian $G$. Since the off-diagonal blocks of $G$ have Hilbert-Schmidt norm less than $\varepsilon$, $G$ is bounded below by $\varepsilon$, and we obtain a sequence that is simply interpolating (and hence strongly separated) but that is not uniformly separated, since thanks to Lemmas \ref{lem:equidistributed} (b) and \ref{lem:assembly} the $l^2$ norms of the columns of the $N\times N$ diagonal blocks of $G$ are unbounded in $N$. Observe that each column of $G$ is bounded, thus $Z$ generates a finite measure. Therefore, this proves Theorem \ref{thm:counterexamples_intro} (b) for the spaces $\mathcal{D}_a$, $0<a\leq1/2$.
\end{example}
In order to cover the whole range $0 < a < 1$ and in order to obtain infinite measure sequences, we will pursue a different approach.

A Fuchsian group is a discrete subgroup of $\operatorname{Aut}(\mathbb D)$, the group of biholomorphic automorphisms of the unit disc.
We will consider sequences that are orbits of Fuchsian groups.
Background material on Fuchsian groups can be found in \cite{DalBo11,Katok92}.

A proper subset $Z \subsetneq \mathbb{D}$ is said to be a zero set for $\mathcal{D}_a$ if there exists a function $f \in \mathcal{D}_a \setminus \{0\}$
such that $Z = \{z \in \mathbb{D}: f(z) = 0 \}$.
It is a property of $\mathcal{D}_a$ that subsets of zero sets are zero sets,
see e.g.\ \cite[Proposition 9.37]{AM02}.

\begin{proposition}
    \label{prop:Fuchsian_zero}
    Let $0 < a \le 1$. Let $G$ be a Fuchsian group, let  $w \in \mathbb D$ and let $Z$ be the orbit of $w$ under $G$.
    The following are equivalent:
    \begin{enumerate}
    \item[(i)] $Z$ is a zero set for $\mathcal D_{a}$;
    \item[(ii)] $Z$ is strongly separated for $\mathcal D_a$.
    \end{enumerate}
\end{proposition}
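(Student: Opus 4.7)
My plan is to treat the two implications separately, with the easy direction exploiting the inclusion $\Mult(\mathcal{D}_a) \subset \mathcal{D}_a$, and the hard direction exploiting the Möbius-invariance of $\mathcal{D}_a$ through unitary weighted composition operators. For (ii) $\Rightarrow$ (i), given strong separation with constant $\varepsilon > 0$, I would pick two distinct orbit points $z_1, z_2 \in Z$ and let $\varphi_1, \varphi_2 \in \Mult(\mathcal{D}_a)$ be the corresponding multipliers of norm at most one. Then $\varphi_1 \varphi_2$ is a non-zero element of $\mathcal{D}_a$ (non-zero because $\varphi_1, \varphi_2$ are non-zero and holomorphic functions on $\mathbb{D}$ form an integral domain) that vanishes at every point of $Z$. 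Hence $Z$ is contained in a zero set, and is itself a zero set by the cited property that subsets of zero sets are zero sets; the degenerate cases $|Z| \le 1$ are trivial.

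For (i) $\Rightarrow$ (ii), the starting observation is that $Z \setminus \{w\}$ is again a zero set as a subset of $Z$, so there exists $F \in \mathcal{D}_a \setminus \{0\}$ with $Z(F) = Z \setminus \{w\}$, and in particular $F(w) \neq 0$. I would then transport this ``peak function at $w$'' to each orbit point $z_n = g_n(w)$ through the weighted composition operator
\begin{equation*}
    U_g f(z) = g'(z)^{a/2} f(g(z)), \qquad g \in \Aut(\mathbb{D}),
\end{equation*}
where $g'(z)^{a/2}$ is any holomorphic branch on $\mathbb{D}$ (available since $g'$ is non-vanishing). A short computation using the Schwarz--Pick identity $|g'(z)|(1 - |z|^2) = 1 - |g(z)|^2$ shows that $U_g^* k_a(\cdot, z) = \overline{g'(z)^{a/2}}\, k_a(\cdot, g(z))$ has the same norm as $k_a(\cdot, z)$, so $U_g$ extends to a unitary operator on $\mathcal{D}_a$.

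Setting $F_n = U_{g_n^{-1}} F$ and using the $G$-invariance of $Z$, the function $F_n$ vanishes exactly on $Z \setminus \{z_n\}$, has $\|F_n\| = \|F\|$, and satisfies $|F_n(z_n)| = (1 - |w|^2)^{a/2} (1 - |z_n|^2)^{-a/2} |F(w)|$. Rescaling $\widetilde{F}_n = k_a(z_n, z_n)^{1/2} F_n / F_n(z_n)$ produces vectors dual to the normalized reproducing kernels at $Z$, and the two powers of $1 - |z_n|^2$ cancel exactly to give $\|\widetilde{F}_n\| = \|F\| / ((1 - |w|^2)^{a/2} |F(w)|)$, a bound independent of $n$. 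By Lemma \ref{lem:dual_system} the normalized reproducing kernels at $Z$ are uniformly minimal, and by Proposition \ref{prop:SS_Pick} together with the complete Pick property of $\mathcal{D}_a$, this gives strong separation.

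The decisive step, and the main obstacle, is the identification of the correct unitary weighted composition operator on $\mathcal{D}_a$; without the exact cancellation provided by Schwarz--Pick, the transported functions $F_n$ would have uncontrolled norms depending on $z_n$ and no uniform separation constant would emerge. Everything else is formal bookkeeping around the $G$-invariance of the orbit, the containment of multipliers in the space, and the standard dictionary (Proposition \ref{prop:SS_Pick}) between uniform minimality of normalized reproducing kernels and strong separation in Pick spaces.
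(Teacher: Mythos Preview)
Your proof is correct. The easy direction (ii) $\Rightarrow$ (i) is essentially the paper's argument with a cosmetic variation: you multiply two of the separating multipliers to kill all of $Z$, whereas the paper takes a single separating multiplier for $w$ and multiplies by the linear factor $z-w$.

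For (i) $\Rightarrow$ (ii) you take a genuinely different route. The paper works entirely inside the multiplier algebra: from the fact that $Z\setminus\{w\}$ is a zero set it extracts (using the complete Pick property) a contractive \emph{multiplier} $\varphi_w$ vanishing on $Z\setminus\{w\}$ with $\varphi_w(w)=\varepsilon>0$, and then uses that $\Mult(\mathcal D_a)$ is isometrically invariant under disc automorphisms, so that the family $\{\varphi_w\circ\theta:\theta\in G\}$ witnesses strong separation directly. You instead stay at the Hilbert-space level: you take $F\in\mathcal D_a$ vanishing exactly on $Z\setminus\{w\}$, transport it by the unitary weighted composition operators $U_g$, and obtain a uniformly bounded dual system for the normalized kernels; then you invoke Proposition~\ref{prop:SS_Pick} to upgrade uniform minimality to strong separation. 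Both arguments ultimately use the Pick property, but in different places: the paper uses it up front (to pass from a Hilbert-space zero to a multiplier), you use it at the end (to pass from uniform minimality to strong separation). Your approach has the virtue of making the Schwarz--Pick cancellation completely explicit; the paper's is shorter because composition with automorphisms already preserves the multiplier norm on the nose.

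One small technical point: showing that $U_g^* k_a(\cdot,z)$ has the same \emph{norm} as $k_a(\cdot,z)$ is not by itself enough to conclude that $U_g$ is unitary; you need the inner products $\langle U_g^* k_a(\cdot,z),\,U_g^* k_a(\cdot,w)\rangle$ to match as well. This follows from the standard transformation identity $1-g(z)\overline{g(w)}=g'(z)^{1/2}\overline{g'(w)^{1/2}}(1-z\bar w)$ (up to a unimodular constant depending on the branch), raised to the power $a$, and is of course well known; but it is worth stating, since your norm computation alone does not close the argument.
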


\begin{proof}
  (ii) $\Rightarrow$ (i) By assumption, there exists a non-zero $f \in \mathcal{D}_a$ that vanishes on $Z \setminus \{w\}$.
  Then $(z-w) f(z) \in \mathcal{D}_a$ vanishes on $Z$. So $Z$ is a subset of a zero set, and hence a zero set.

    (i) $\Rightarrow$ (ii) Let $Z$ be a zero set. Since subsets of zero sets are zero sets, there exists a multiplier $\varphi_w \in \Mult(\mathcal D_a)$ of norm at most one vanishing on $Z \setminus \{w\}$
    such that $\varphi_w(w) = \varepsilon > 0$.
    It is well known that the multiplier algebra of $\mathcal{D}_a$ is isometrically invariant under biholomorphic automorphisms; this follows from a computation with the reproducing kernels, see e.g.\ \cite[Corollary 4.4]{Hartz17a}.
Thus, the family $\{\varphi_w \circ \theta: \theta \in G \}$
    is a family of multipliers of norm at most one, which shows that $Z$ is strongly separated.
\end{proof}

\begin{example}
  \label{exa:infinite_measure}
    Let $Z \subset \mathbb D$ be the Cayley transform of the sequence $(n+i)_{n \in \mathbb Z}$ in the upper half plane; explicitly,
    \begin{equation*}
      Z = \Big\{ \frac{n}{n+ 2 i}: n \in \mathbb{Z} \Big\}.
    \end{equation*}
    Then $Z$ is the orbit of a Fuchsian group (the group generated by the disc automorphism corresponding to the upper half plane automorphism $z \mapsto z +1$).
    Since
    \begin{equation*}
      1 - \Big| \frac{n}{n+2 i} \Big| \simeq 1 - \Big| \frac{n}{n+2 i}\Big|^2
     =  \frac{4}{n^2+4},
    \end{equation*}
    we find that
    \[
      \sum_{z \in Z} (1 - |z|)^a < \infty \quad \Longleftrightarrow \quad a > \frac{1}{2}.
    \]
    In particular, $Z$ is a Blaschke sequence that generates an infinite measure in $\mathcal D_a$ for $a \le \frac{1}{2}$. Moreover, $Z$ is contained in a horocycle touching $\partial \mathbb D$ at $1$.
    This horocycle has in particular finite order of contact with the boundary.
    In this setting, a result of Bogdan \cite{Bogdan96}, based on a theorem of Taylor and Williams \cite{TW71}, implies that $Z$ is a zero sequence
    for the classical Dirichlet space, and in particular for $\mathcal D_a$. Hence for $a \le \frac{1}{2}$,
    the sequence $Z$ is a strongly separated infinite measure sequence for $\mathcal D_a$. 
\end{example}

Using deeper results from the theory of Fuchsian groups, the construction in the previous example
can be extended to the whole range $0 < a< 1$.
This establishes Theorem \ref{thm:counterexamples_intro} (c) for $\mathcal{D}_a$.

\begin{proposition}
  \label{prop:DA_infinite_measure}
    Let $0< a < 1$. There exists a strongly separated sequence that generates an infinite measure in $\mathcal D_a$.
\end{proposition}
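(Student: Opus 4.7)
Since Example \ref{exa:infinite_measure} already establishes the proposition for $0 < a \leq 1/2$, my focus is on the range $1/2 < a < 1$. The plan is still to realize the sequence as an orbit $Z = Gw$ of a non-elementary Fuchsian group $G \subset \Aut(\mathbb{D})$: once $Z$ is shown to be a zero set for $\mathcal{D}_a$ and satisfies $\sum_{z \in Z}(1-|z|)^a = \infty$, Proposition \ref{prop:Fuchsian_zero} immediately delivers strong separation, and the weighted sum condition is the desired infinite measure.

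The divergence side is controlled by the Patterson--Sullivan critical exponent $\delta(G)$: the Poincar\'e series $\sum_{\gamma \in G}(1-|\gamma w|^2)^s$ diverges for every $s < \delta(G)$, and also at $s = \delta(G)$ when $G$ is of divergence type. The cyclic parabolic group used in Example \ref{exa:infinite_measure} has critical exponent $\delta = 1/2$, and so produces an infinite $\mathcal{D}_a$-measure only for $a \leq 1/2$. To cover $a > 1/2$ I would pick a Fuchsian group with additional (parabolic and/or hyperbolic) generators so that $\delta(G) > a$; such groups exist for any target exponent in $(0,1)$ via standard Schottky-type or multi-cusped constructions.

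The essential difficulty is retaining the zero-set property. For $a \leq 1/2$ one argues via the inclusion $\mathcal{D} \subset \mathcal{D}_a$ combined with Bogdan--Taylor--Williams applied to a single horocycle, but neither input survives for $a > 1/2$: a horocycle of finite order of contact generates only a finite $\mathcal{D}_a$-measure in that regime, and the new orbit clusters on a limit set of Hausdorff dimension $\delta(G) > 1/2$, which is too large for the elementary boundary-capacity arguments tailored to the unweighted Dirichlet space. My plan is to build a nonzero $\varphi \in \Mult(\mathcal{D}_a)$ vanishing on $Z$ as an infinite product of two-point Pick multipliers associated with the group elements, and to control its convergence via shadow-lemma estimates for the Patterson--Sullivan measure, orbit-counting asymptotics in cusps, and a Riesz $(1-a)$-capacity characterization of $\mathcal{D}_a$-zero sets on the boundary.

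The principal obstacle is the tension between the two requirements: the same richness of $G$ that forces $\sum(1-|z|)^a$ to diverge simultaneously threatens the convergence of any Blaschke-type product on the orbit. Reconciling these competing demands is precisely what the deeper Fuchsian-group input is for, and I expect this to be the main source of technical difficulty. Once a non-trivial vanishing multiplier is produced, its zero set contains $Z$, hence $Z$ is a zero set for $\mathcal{D}_a$, and Proposition \ref{prop:Fuchsian_zero} completes the proof.
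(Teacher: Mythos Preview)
Your overall architecture matches the paper's: realize the sequence as a Fuchsian orbit with critical exponent exceeding $a$, then invoke Proposition~\ref{prop:Fuchsian_zero}. The divergence side is fine. The genuine gap is the zero-set step. You propose to manufacture a vanishing multiplier as an infinite product of two-point Pick solutions, controlled by shadow-lemma and orbit-counting estimates, but you do not carry this out and you yourself flag it as the ``principal obstacle''. As stated this is a plan, not a proof: the tension you identify (the same richness that forces divergence of $\sum (1-|z|)^a$ threatens convergence of any Blaschke-type product) is real, and nothing in your outline explains how it is resolved.

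The paper avoids this difficulty entirely. Rather than building a multiplier by hand, it chooses a specific group---Beardon's example generated by $z\mapsto z+2+\varepsilon$ and $z\mapsto -1/z$---which is finitely generated and of the \emph{second kind}. Pommerenke showed that the limit set of any such group is a Carleson set; Daase combined with Taylor--Williams then gives a function with all derivatives in the disc algebra vanishing exactly on the orbit. In particular the orbit is a zero set for the classical Dirichlet space $\mathcal{D}$, and since $\mathcal{D}\subset\mathcal{D}_a$ for every $0<a<1$, it is a zero set for $\mathcal{D}_a$. Your remark that ``neither input survives for $a>1/2$'' is therefore not quite right: the inclusion $\mathcal{D}\subset\mathcal{D}_a$ holds throughout the range, and the obstruction you anticipate from the limit set having Hausdorff dimension $>1/2$ is illusory---what matters is the Carleson-set structure, not the dimension, and Pommerenke's theorem supplies exactly that. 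The paper's route thus replaces your hard analytic construction with a short chain of citations.
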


\begin{proof}
    A theorem of Beardon \cite[Theorem 1]{Beardon71} shows that there exists $\varepsilon > 0$ such that
    if $G$ denotes the Fuchsian group generated by the upper half plane automorphisms
    \[
        z \mapsto z + 2 + \varepsilon \quad \text{ and } z \mapsto - \frac{1}{z},
    \]
    and $Z$ is the orbit of any point under $G$,
    then
    \[
    \sum_{z \in Z} (1 - |z|)^a = \infty.
    \]
    (See e.g.\ the introduction of \cite{K.V.RAJESWARA69} for an explanation for why the convergence
    of the sum considered in \cite{Beardon71} is equivalent to the convergence of the sum above.)

    By Proposition \ref{prop:Fuchsian_zero}, we may finish the proof by showing that $Z$ is a zero set for the classical Dirichlet space. To this end, we use the fact, established by Beardon \cite{Beardon71}, that $G$ is a Fuchsian group of the second kind. Pommerenke \cite{Pommerenke76} showed
    that the limit set of any finitely generated Fuchsian group of the second kind is a Carleson set;
    see also \cite{Shirokov90,Vasin03} for an even stronger statement.
    A result of Daase \cite[Lemma 3]{Daase83}, combined with a theorem of Taylor on Williams \cite{TW71},
    then shows that $Z$ is the zero set of a function $f$ with the property that all derivatives of $f$ belong to the disc algebra. In particular, $Z$ is a zero set for the classical Dirichlet space.
\end{proof}

\section{Interpolating sequences in the ball}
\label{sec:ball}

The goal of this section is to construct examples of sequences on the ball
that distinguish the various interpolation and separation conditions for the Drury--Arveson space.
To this end, we will leverage our examples on the disc.

We use a device that is frequently useful for constructing examples related to the Drury--Arveson space;
it already appeared in Arveson's work, see \cite[Theorem 3.3]{Arveson98}.
Let
\[
r: \mathbb B_2 \to \mathbb D, \quad (z_1,z_2) \mapsto 2 z_1 z_2.
\]
A proof of the following result can be found in
\cite[Section 7]{AHM+20a}.

\begin{lemma}
  \label{lem:embedding}
  The map
\[
  \mathcal{D}_{1/2} \to H^2_2, \quad f \mapsto f \circ r,
\]
is an isometry, and the map
\[
 \Mult(\mathcal H) \to \Mult(H^2_2), \quad \varphi \mapsto \varphi \circ r,
\]
is a (complete) isometry.
\end{lemma}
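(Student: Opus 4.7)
The plan is to establish the Hilbert space isometry first, and then deduce the (complete) multiplier isometry in two steps. For the Hilbert space part, I would compute norms on the monomial bases. In $\mathcal{D}_{1/2}$, reading off the Taylor expansion of $k_{1/2}(z,\overline z) = (1-|z|^2)^{-1/2}$, one finds that $\{w^n\}$ is an orthogonal sequence with $\|w^n\|_{\mathcal{D}_{1/2}}^2 = 4^n/\binom{2n}{n}$. In $H^2_2$ distinct monomials are orthogonal with $\|z_1^a z_2^b\|_{H^2_2}^2 = a!\,b!/(a+b)!$, so that $\|(2z_1 z_2)^n\|_{H^2_2}^2 = 4^n(n!)^2/(2n)! = 4^n/\binom{2n}{n}$. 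Since $V(w^n) = (2z_1 z_2)^n$, the map $f\mapsto f\circ r$ is an isometry on polynomials, and hence extends to an isometry from $\mathcal{D}_{1/2}$ onto the closed subspace $\mathcal{K} := \overline{\mathrm{span}}\{(z_1 z_2)^n : n\ge 0\}$ of $H^2_2$.

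For the multiplier claim, I would prove two inequalities. The inequality $\|\varphi\|_{\Mult(\mathcal{D}_{1/2})} \le \|\varphi\circ r\|_{\Mult(H^2_2)}$ is easy: $\mathcal K$ is invariant under $M_{\varphi\circ r}$ since $(\varphi\circ r)(f\circ r) = (\varphi f)\circ r$, and under the unitary $V$ the restriction $M_{\varphi\circ r}|_{\mathcal K}$ corresponds to $M_\varphi$ on $\mathcal{D}_{1/2}$. For the reverse inequality, I would invoke the complete Pick property of $H^2_2$: given $\varphi$ with $\|\varphi\|_{\Mult(\mathcal{D}_{1/2})} \le 1$ and points $z^{(1)},\ldots,z^{(N)}\in\mathbb{B}_2$, it suffices to show that the $H^2_2$-Pick matrix $[K(z^{(i)},z^{(j)})(1-\overline{\varphi(p_i)}\varphi(p_j))]$, with $p_i = r(z^{(i)})$, is positive semidefinite. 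Writing this matrix as the Schur product of the ``kernel ratio'' $[K(z^{(i)},z^{(j)})/k_{1/2}(p_i,p_j)]$ and the $\mathcal{D}_{1/2}$-Pick matrix (the latter positive by the complete Pick property of $\mathcal{D}_{1/2}$), it suffices to prove that
\[
F(z,w) := \frac{K(z,w)}{k_{1/2}(r(z),r(w))} = \frac{(1-4z_1 z_2\overline{w_1 w_2})^{1/2}}{1-\langle z,w\rangle}
\]
is a positive kernel on $\mathbb{B}_2$.

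The positivity of $F$ is the main obstacle. Setting $u = z_1 \overline{w_1}$ and $v = z_2 \overline{w_2}$ turns $F$ into $\sqrt{1-4uv}/(1-u-v)$, and the identity $\sqrt{1-4x} = 1 - 2x\,C(x)$, where $C(x) = \sum_{n\ge 0} C_n x^n$ is the generating function of the Catalan numbers, yields
\[
F(u,v) \;=\; \frac{1}{1-u-v} - \frac{2uv\,C(uv)}{1-u-v}.
\]
I would interpret this combinatorially. The term $1/(1-u-v) = \sum_{a,b}\binom{a+b}{a}u^a v^b$ is the generating function for all lattice paths from $(0,0)$ to $(a,b)$, while $2uv\,C(uv) = \sum_{k\ge 1}2C_{k-1}(uv)^k$ is the generating function for primitive paths from $(0,0)$ to the diagonal (paths first returning to the diagonal $y=x$ at $(k,k)$, with the factor $2$ accounting for whether the first step is up or right). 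The standard first-passage decomposition of lattice paths then gives $F(u,v) = \sum_{a,b}N(a,b)u^a v^b$, where $N(a,b)\ge 0$ counts lattice paths from $(0,0)$ to $(a,b)$ that do not return to the diagonal after the start (in particular $N(a,a) = 0$ for $a\ge 1$). Hence as a function of $(z,w)$,
\[
F(z,w) \;=\; \sum_{a,b\ge 0} N(a,b)\, z_1^a z_2^b\, \overline{w_1^a w_2^b},
\]
a sum of rank-one positive kernels, which is the desired positivity.

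Finally, the complete isometry of the multiplier map follows from exactly the same Schur-product argument applied to operator-valued Pick matrices: positivity of the scalar kernel ratio $F$ combined with the complete Pick property of $H^2_2$ lifts to the matrix level.
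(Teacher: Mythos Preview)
Your argument is correct. The monomial computation for the Hilbert space isometry is standard and accurate, and the Schur-product reduction of the multiplier inequality to positivity of the kernel ratio $F(z,w)=(1-4z_1z_2\overline{w_1w_2})^{1/2}/(1-\langle z,w\rangle)$ is exactly the right move. Your lattice-path/Catalan identification $F=\sum_{a,b}N(a,b)\,z_1^a z_2^b\,\overline{w_1^a w_2^b}$ with $N(a,b)\ge 0$ (the ballot numbers $\tfrac{|a-b|}{a+b}\binom{a+b}{a}$ for $a\neq b$, and $N(0,0)=1$, $N(a,a)=0$ for $a\ge 1$) is a clean and verifiable way to establish this positivity; the series converges on $\mathbb{B}_2\times\mathbb{B}_2$ since $N(a,b)\le\binom{a+b}{a}$ and $|z_1\overline{w_1}|+|z_2\overline{w_2}|\le\|z\|\,\|w\|<1$.

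Two small remarks. First, you do not actually need the complete Pick property at either step of the Schur argument: the positivity of $(1-\overline{\varphi}\varphi)\,k_{1/2}$ for a contractive multiplier $\varphi$ holds in any reproducing kernel Hilbert space, and conversely positivity of $(1-\overline{\psi}\psi)\,K$ on all of $\mathbb{B}_2$ already forces $\|\psi\|_{\Mult(H^2_2)}\le 1$ by the standard kernel characterization of contractive multipliers. So the argument goes through without invoking Pick interpolation. Second, the paper itself does not give a proof but simply refers to \cite[Section~7]{AHM+20a}; the approach there is likewise based on the kernel-ratio factorization $K=F\cdot(k_{1/2}\circ(r\times r))$, so your route is in the same spirit, with the combinatorial verification of positivity of $F$ being a pleasant explicit alternative to quoting the factorization from the literature.
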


The idea is now is to construct examples for $H^2_d$ from our examples from $\mathcal{D}_{1/2}$.
The following lemma is a first step in this direction.

\begin{lemma}
  \label{lem:US_ball}
        Let $z \in \mathbb D$, let $z^{1/2}$ be any square root of $z$, let $N \in \mathbb{N}, N \ge 2$, and define
    \[
      W = \{ 2^{-1/2} z^{1/2} ( e^{\frac{2 \pi i j}{N}}, e^{\frac{-2 \pi i j }{N}}) : 0 \le j \le N-1 \}.
    \]
    If $N \le \frac{1}{(1 - |z|)^{1/2}}$, then $W$ is uniformly separated for $H^2_2$ with uniform
    separation constant independent of $z$ and $N$.
\end{lemma}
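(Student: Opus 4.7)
The key observation is that the points $w_j := 2^{-1/2} z^{1/2}(e^{2\pi i j/N}, e^{-2\pi i j/N})$ have very symmetric inner products. A direct computation gives
\[
  \langle w_j, w_k \rangle = 2^{-1} |z^{1/2}|^2 \big( e^{2\pi i (j-k)/N} + e^{-2\pi i(j-k)/N} \big) = |z|\cos\!\Big( \frac{2\pi(j-k)}{N}\Big),
\]
so in particular $\|w_j\|^2 = |z|$ for all $j$, hence $K(w_j,w_j) = (1-|z|)^{-1}$. Moreover, every inner product $\langle w_j, w_k\rangle$ is real, which leads to the clean formula
\[
  \frac{|K(w_j,w_k)|^2}{K(w_j,w_j)K(w_k,w_k)} = \frac{(1-|z|)^2}{\big(1 - |z|\cos(2\pi(j-k)/N)\big)^2}.
\]
Throughout, observe that the hypothesis $N \ge 2$ together with $N \le (1-|z|)^{-1/2}$ forces $|z| \ge 3/4$; so we may assume $|z|$ is bounded below by a universal constant.

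By Proposition \ref{prop:uniformly_separated}, uniform separation will follow once we verify weak separation and that the Gramian $G$ is bounded as an operator from $\ell^2$ to $\ell^\infty$, both with constants independent of $z$ and $N$. For weak separation, write $\delta = 1 - |z|$ and note that for $m \in \{1, \ldots, N-1\}$, the elementary estimate $1 - \cos(2\pi m/N) \gtrsim (m/N)^2 \wedge ((N-m)/N)^2$ gives $1 - \cos\theta \gtrsim 1/N^2$ when $\theta = 2\pi m/N$ is any allowed non-zero angle. Using
\[
  1 - |z|\cos\theta = \delta + |z|(1 - \cos\theta),
\]
this yields $1 - |z|\cos\theta \gtrsim \delta + |z|/N^2$. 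Under the hypothesis $N^2 \delta \le 1$, the second term dominates $\delta$ (since $|z| \ge 3/4$), so $(1 - |z|\cos\theta)/\delta \gtrsim 1/(N^2 \delta) \ge 1$, which gives $\frac{|K(w_j,w_k)|^2}{K(w_j,w_j)K(w_k,w_k)} \le \eta < 1$ for a universal $\eta$.

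For the $\ell^2 \to \ell^\infty$ bound on $G$, by the translation symmetry of the inner products, every column of $G$ has the same modulus sum, so it suffices to bound
\[
  \sum_{m=1}^{N-1} \frac{(1-|z|)^2}{\big(1 - |z|\cos(2\pi m/N)\big)^2}.
\]
Splitting by symmetry between $m \le N/2$ and $m > N/2$, and using $1 - |z|\cos(2\pi m/N) \gtrsim |z|(m/N)^2$ for $m \le N/2$ with $|z| \ge 3/4$, each term is bounded by $C\,\delta^2 N^4/m^4$. Summing $\sum_{m \ge 1} m^{-4}$ and invoking $\delta N^2 \le 1$ gives a universal bound $C' \delta^2 N^4 \le C'$, as required.

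The main point where one has to be careful is matching the $N \le (1-|z|)^{-1/2}$ threshold with the two quantitative estimates above; the hypothesis is exactly sharp enough to balance the $1/N^2$ loss in $1 - \cos(2\pi/N)$ against the $(1-|z|)^2$ factor in the numerator of the normalized kernel ratio. Once this balance is set up, both weak separation and the $\ell^2 \to \ell^\infty$ estimate drop out by elementary summation, and Proposition \ref{prop:uniformly_separated} delivers the conclusion.
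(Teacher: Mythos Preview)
Your proof is correct and follows essentially the same approach as the paper: compute the Gramian entries $G_{jk} = (1-|z|)/(1-|z|\cos(2\pi(j-k)/N))$, show weak separation via $1-|z|\cos\theta \ge \delta + c|z|/N^2$ combined with $N^2\delta \le 1$, and bound the column $\ell^2$-norms by summing $\delta^2 N^4/m^4$ and again invoking $\delta N^2 \le 1$. The only cosmetic difference is that you make the lower bound $|z|\ge 3/4$ explicit, whereas the paper absorbs it into the $\simeq$ notation.
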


\begin{proof}
  Let $ w_j = 2^{-1/2} z^{1/2} ( e^{\frac{2 \pi i j}{N}}, e^{\frac{-2 \pi i j }{N}})$.
  Let $G = (G_{nj})$ be the $H^2_2$-Gramian of the points $w_0,\ldots,w_{N-1}$. Thus,
  \begin{equation*}
    G_{n j} = \frac{(1 - \|w_n\|^2)^{1/2} (1 - \|w_j\|^2)^{1/2}}{1 - \langle w_n,w_j \rangle}
    = \frac{1 - |z|}{1 - |z| \cos( 2 \pi (n-j)/N)}.
  \end{equation*}

  First, we will show that $W$ is weakly separated with weak separation constant independent of $z$ and $N$.
  For this, we need to show that there exists $\varepsilon > 0$ such that $|G_{nj}| \le 1 - \varepsilon$ for all $j \neq n$. The maximum value of $|G_{nj}|$ for $n \neq j$ occurs for $(j,n) = (0,1)$.
  Moreover, by the Taylor expansion of cosine,
  \begin{equation*}
    \frac{1}{|G_{10}|} -1 = \frac{|z| (1 - \cos(2 \pi / N))}{1 - |z|}
    \simeq \frac{1}{N^2(1 - |z|)},
  \end{equation*}
  which is at least $1$ by assumption. This yields the claim about weak separation.

  It remains to show that $ \sup_j \sum_{n \neq j} |G_{n j}|^2 \le C$ for some universal constant $C$; see Proposition \ref{prop:uniformly_separated}.
  To see this, we again use Taylor expansion of cosine to find $\delta > 0$ such that $\cos(2 \pi n/N) \le 1 - \delta (n/N)^2$
  for all $1 \le n \le \lceil N/2 \rceil$.
  Thus,
  \begin{align*}
    \sup_j \sum_{n \neq j} |G_{nj}|^2
    = \sum_{n=1}^{N-1} |G_{n 0}|^2
    \lesssim &\sum_{n= 1}^{\lceil N/2 \rceil} \frac{(1 - |z|)^2}{(1 - |z| \cos(2 \pi n /N))^2} \\
    \le &\sum_{n=1}^{\lceil N/2 \rceil} \frac{(1 - |z|)^2}{(1 - |z| (1 - \delta (n/N)^2))^2} \\
    \le& (1 - |z|)^2 \sum_{n=1}^{\lceil N/2 \rceil} \frac{N^4}{\delta^2 n^4} \\
    \le &\frac{1}{\delta^2} \sum_{n=1}^\infty \frac{1}{n^4},
  \end{align*}
  where the final estimate follows from the upper bound on $N$.
\end{proof}

To deal with uniform separation, we also require the following estimate.

\begin{lemma}
  \label{lem:cos_estimate}
  There exists a universal constant $C \ge 0$ such that for all $0 \le t \le \frac{\pi}{2}$, all $z \in \mathbb{D}$
  and all $N \in \mathbb{N}$ with $N \ge \frac{1}{2(1 - |z|)^{1/2}}$,
  the following estimate holds:
  \begin{equation*}
    \frac{1}{N} \sum_{l=0}^{\lfloor \frac{N}{4} \rfloor} \frac{1}{|1 - z \cos(t - \frac{2 \pi l}{N})|^2} \le C \frac{1}{|1 - z|^{3/2}}.
  \end{equation*}
\end{lemma}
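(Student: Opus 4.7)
The plan is to dominate the summands pointwise by a simpler function, compare the Riemann sum with an integral, and evaluate the integral by a scaling argument. More precisely, I would first establish the pointwise bound
\[
|1 - z\cos s|^2 \ge c_0(|1-z|^2 + s^4)
\]
for a universal $c_0 > 0$ and every $z \in \overline{\mathbb D}$, $s \in [-\pi/2, \pi/2]$. Writing $1 - z \cos s = (1 - \cos s) + (1 - z) \cos s$ and expanding the squared modulus gives
\[
|1-z\cos s|^2 = (1-\cos s)^2 + |1-z|^2\cos^2 s + 2(1-\cos s)(\cos s)\operatorname{Re}(1-z).
\]
Since $\cos s \ge 0$ on $[-\pi/2,\pi/2]$ and $\operatorname{Re}(1-z) \ge 1-|z| \ge 0$ on $\overline{\mathbb{D}}$, the cross term is non-negative. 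For $|s|\le\pi/3$ one concludes from $1-\cos s \ge 2s^2/\pi^2$ and $\cos s \ge 1/2$; for $\pi/3 \le |s| \le \pi/2$ the cruder bound $|1-z\cos s| \ge 1-\cos s \ge 1/2$ combined with the universal upper bound on $|1-z|^2 + s^4$ yields the same inequality after adjusting $c_0$.

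With this bound in hand, set $g(s) = 1/(|1-z|^2 + s^4)$, which is even and strictly decreasing on $[0,\infty)$. Since $t \in [0,\pi/2]$, the convex hull of the grid points $\{s_l : 0 \le l \le \lfloor N/4\rfloor\}$ contains $0$, so the grid point $s_{l^*}$ closest to $0$ satisfies $|s_{l^*}| \le \pi/N$, and the reverse triangle inequality gives $|s_l| \ge \pi|l-l^*|/N$ for $l \neq l^*$. Monotonicity of $g$ and the usual integral comparison for decreasing sequences then yield
\[
\sum_{l=0}^{\lfloor N/4\rfloor} g(s_l) \le g(0) + 2\sum_{k=1}^{\infty} g(\pi k/N) \le \frac{1}{|1-z|^2} + \frac{N}{\pi}\int_{-\infty}^\infty g(s)\,ds.
\]
The substitution $s = |1-z|^{1/2}u$ evaluates the integral to a universal multiple of $|1-z|^{-3/2}$, and the hypothesis $N \ge \tfrac{1}{2}(1-|z|)^{-1/2}$ combined with $|1-z| \ge 1-|z|$ gives $N|1-z|^{1/2} \ge 1/2$, whence $g(0)/N \le 2|1-z|^{-3/2}$. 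Dividing through by $N$ and using the pointwise bound above then gives the desired estimate.

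The main obstacle is the pointwise lower bound in the first step. The natural factorization $|1-z\cos s|^2 = r^2(\cos s - \cos\theta/r)^2 + \sin^2\theta$ (writing $z = re^{i\theta}$) shows that the minimum of $|1-z\cos s|^2$ over $s$ can equal $\sin^2\theta$, which may be strictly smaller than $|1-z|^2 = (r-\cos\theta)^2 + \sin^2\theta$, so this representation alone does not produce the desired form. The cleaner decomposition used above exploits the non-negativity of $\cos s$ and $\operatorname{Re}(1-z)$ on the relevant ranges so that the expansion of $|a+b|^2$ yields a lower bound compatible with the target integral.
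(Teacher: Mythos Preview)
Your argument is correct and takes a cleaner route than the paper's. One small slip: the convex hull of the grid points $s_l = t - 2\pi l/N$ need not contain $0$ when $N$ is not a multiple of $4$ and $t$ is close to $\pi/2$ (for instance $N=7$, $t=\pi/2$ gives $s_0 = \pi/2$ and $s_1 = 3\pi/14 > 0$). What is always true is that the smallest grid point satisfies $s_{\lfloor N/4\rfloor} \le \pi/2 - 2\pi(N/4-1)/N = 2\pi/N$, so $|s_{l^*}| \le 2\pi/N$ in every case; the integral comparison then goes through after absorbing the at most three terms with $|l-l^*| \le 1$ into the $g(0)$ bound.

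The paper's proof is more hands-on: it shifts the sum so the grid is centred near zero, then explicitly splits the indices at the scale $|l| \approx N|1-z|^{1/2}$, using $|1-z\cos s| \gtrsim |1-z|$ on the inner block and $|1-z\cos s| \gtrsim s^2$ (via Taylor expansion of $\cos$) on the outer one, and sums each piece separately. Your pointwise inequality $|1-z\cos s|^2 \gtrsim |1-z|^2 + s^4$ unifies these two regimes into a single estimate, and the substitution $s = |1-z|^{1/2} u$ in the resulting integral replaces the explicit splitting. Both routes rest on the same underlying bounds; yours packages them more economically and makes the exponent $3/2$ emerge from the homogeneity of $\int_{-\infty}^\infty (A^2+s^4)^{-1}\,ds = c\,A^{-3/2}$ rather than from counting terms.
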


\begin{proof}
  We may assume that $N \ge 8$; otherwise, $|z|$ is bounded away from $1$, and the result is immediate.
  We write $t = \frac{2 \pi m}{N} + s$ for some $s$ with $0 \le s < \frac{2 \pi}{N}$ and $m \in \mathbb{Z}$
  with $0 \le m \le \lfloor \frac{N}{4} \rfloor$. Then
  \begin{equation*}
    \frac{1}{N} \sum_{l=0}^{\lfloor \frac{N}{4} \rfloor} \frac{1}{|1 - z \cos(t - \frac{2 \pi l}{N})|^2}
    \le \frac{1}{N} \sum_{l=- \lfloor \frac{N}{4} \rfloor}^{\lfloor \frac{N}{4} \rfloor}
    \frac{1}{|1 - z \cos(s - \frac{2 \pi l}{N})|^2}.
  \end{equation*}
  We first deal with the summand corresponding to $l = - \lfloor{\frac{N}{4}} \rfloor$, for which we
  observe that $\frac{\pi}{4} \le s - \frac{2 \pi l }{N} \le \frac{3 \pi}{4}$,
  so that summand is bounded uniformly in $z,N$ and $s$.
  For the remaining summands, we have $\cos(s - \frac{2 \pi l}{N}) \ge 0$.

  Let $I = \{ l \in \mathbb{Z}: - \lfloor \frac{N}{4} \rfloor + 1 \le l \le  \lfloor \frac{N}{4} \rfloor \}$
  be the set of remaining indices.
  We break up $I$ into
  \begin{equation*}
    I_1 = \{l \in I: |l| \le \lfloor 2 N |1 - z|^{1/2} \rfloor \}
  \end{equation*}
  and $I_2 = I \setminus I$.
  For $I_1$, we use the basic estimate
  $|1 - z| \le 2 |1 - r z|$ for all $r \in [0,1]$ and $z \in \mathbb{D}$
  and obtain
  \begin{equation*}
    \frac{1}{N} \sum_{l \in I_1}
    \frac{1}{|1 - z \cos(s - \frac{2 \pi l}{N})|^2}
    \lesssim \frac{1}{N}
    \sum_{l \in I_1}
    \frac{1}{|1  - z|^2} \lesssim \frac{1}{|1 - z|^{3/2}}.
  \end{equation*}
  For $I_2$, we use the Taylor expansion of cosine to bound
  $1 - \cos(s - \frac{2 \pi l}{N}) \gtrsim (s - \frac{2 \pi l}{N})^2$, so
  \begin{align*}
    &\frac{1}{N} \sum_{l \in I_2}
    \frac{1}{|1 - z \cos(s - \frac{2 \pi l}{N})|^2}\\
    \le & \frac{1}{N} \sum_{l \in I_2}
    \frac{1}{(1 - \cos(s - \frac{2 \pi l}{N}))^2 } \\
    \lesssim &\frac{1}{N} \sum_{l \in I_2}
    \frac{1}{(s - \frac{2 \pi l}{N})^4} \\
    \lesssim &\frac{1}{N} \sum_{l \ge \lfloor{2 N |1 - z|^{1/2}} \rfloor} \frac{N^4}{l^4} \\
    \lesssim &\frac{N^3}{( \lfloor 2 N |1 - z|^{1/2} \rfloor)^3}
    \lesssim \frac{1}{|1 - z|^{3/2}}.
  \end{align*}
  In the final estimate, we used the assumption $N \ge \frac{1}{2 (1 - |z|)^{1/2}}$,
  so $\lfloor 2 N |1 - z|^{1/2} \rfloor \ge N |1 - z|^{1/2}$.
\end{proof}

The following result makes it possible to turn counterexamples on the disc
into counterexamples on the ball.

\begin{proposition}
  \label{prop:ball_counterexamples}
  Let $(z_n)$ be a sequence in $\mathbb{D}$, let $z_n^{1/2}$ be any square root of $z_n$, let $N_n = \lfloor \frac{1}{(1 - |z_n|)^{1/2}} \rfloor$
  and define
  \begin{equation*}
    W = \Big\{ 2^{-1/2} z_n^{1/2} (e^{\frac{2 \pi i j}{N_n}}, e^{\frac{- 2 \pi i j}{N_n}}): 0 \le j \le \lfloor \frac{N_n}{4} \rfloor, n \in \mathbb{N} \Big\} \subset \mathbb{B}_2.
  \end{equation*}
  Then:
  \begin{enumerate}
    \item[(a)] If $(z_n)$ is strongly separated for $\mathcal{D}_{1/2}$, then $W$ is strongly separated for $H^2_2$.
    \item[(b)] If $(z_n)$ generates an infinite measure for $\mathcal{D}_{1/2}$, then $W$ generates an infinite measure for $H^2_2$.
    \item[(c)] If $(z_n)$ fails the Carleson condition for $\mathcal{D}_{1/2}$, then $W$ fails the Carleson condition
      for $H^2_2$.
    \item[(d)] If $(z_n)$ is uniformly separated for $\mathcal{D}_{1/2}$, then $W$ is uniformly separated for $H^2_2$.
  \end{enumerate}
\end{proposition}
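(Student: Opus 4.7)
The plan is to exploit the map $r: \B_2 \to \D$, $(w_1, w_2) \mapsto 2 w_1 w_2$, which sends every point of the $n$-th cluster $W_n = \{2^{-1/2} z_n^{1/2}(e^{2\pi i j/N_n}, e^{-2\pi i j/N_n}): 0 \le j \le \lfloor N_n/4 \rfloor\}$ to the single point $z_n \in \D$, while satisfying $\|w\|^2 = |z_n|$ for every $w \in W_n$. Combined with Lemma \ref{lem:embedding}, this lets us lift strong separation, kernel measure, and Carleson phenomena directly from $\mathcal D_{1/2}$ to $H^2_2$. Parts (a)--(c) follow relatively quickly from this principle together with Lemma \ref{lem:US_ball}; part (d) is the main obstacle and will require an angular averaging argument based on Lemma \ref{lem:cos_estimate}.

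For (a), given a multiplier $\varphi_n \in \Mult(\mathcal D_{1/2})$ of norm $\le 1$ with $\varphi_n(z_k) = \varepsilon \delta_{nk}$, the lift $\varphi_n \circ r$ is a multiplier of $H^2_2$ of the same norm, equal to $\varepsilon$ on all of $W_n$ and to $0$ on every other cluster. To distinguish points within $W_n$, I will invoke Lemma \ref{lem:US_ball}, which ensures each cluster $W_n$ (as a subset of the full circle of $N_n$ equidistributed points) is uniformly separated for $H^2_2$ with a constant independent of $n$; since $H^2_2$ is a complete Pick space, US implies SS with a uniform constant, producing multipliers $\psi_{n,j}$ of uniformly bounded norm that isolate $w_{n,j}$ within its cluster. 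The products $\psi_{n,j} (\varphi_n \circ r)$ then witness strong separation of $W$. For (b), a direct count gives $\sum_{w \in W}(1 - \|w\|^2) = \sum_n (\lfloor N_n/4 \rfloor + 1)(1 - |z_n|) \simeq \sum_n (1 - |z_n|)^{1/2}$, which is comparable to the $\mathcal D_{1/2}$ measure of $(z_n)$. For (c), a test function $f \in \mathcal D_{1/2}$ failing the Carleson condition for $(z_n)$ pulls back to $g = f \circ r \in H^2_2$ by Lemma \ref{lem:embedding}, and since $g$ takes the constant value $f(z_n)$ on each $W_n$, the computation in (b) with the factor $|f(z_n)|^2$ inserted in each summand gives failure of the Carleson condition for $W$.

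For (d), weak separation of $W$ is essentially free: by part (a) combined with the (complete Pick) implication US $\Rightarrow$ SS in $\mathcal D_{1/2}$, the sequence $W$ is strongly separated in $H^2_2$, and strong separation implies weak separation in any Hilbert space via Proposition \ref{prop:SS_Pick}. The substantial work is to bound $\sup_w \sum_{w' \in W}|G(w, w')|^2$, where $G$ is the normalized $H^2_2$-Gramian. I fix $w = w_{n,j}$ and split the sum by cluster; the intra-cluster piece is controlled by Lemma \ref{lem:US_ball}. For the inter-cluster piece, the explicit formula $\langle w_{n,j}, w_{m,l}\rangle = z_n^{1/2} \overline{z_m^{1/2}} \cos(2\pi j/N_n - 2\pi l/N_m)$ lets me apply Lemma \ref{lem:cos_estimate} with $z = z_n^{1/2}\overline{z_m^{1/2}}$, $N = N_m$ and $t = 2\pi j/N_n$; the hypothesis $2 N_m (1 - |z_n|^{1/2}|z_m|^{1/2})^{1/2} \gtrsim 1$ follows from the elementary comparison $1 - |z_n|^{1/2}|z_m|^{1/2} \simeq \max(1 - |z_n|, 1 - |z_m|)$. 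Summing over $l$ then yields a bound of order $N_m / |1 - z_n^{1/2}\overline{z_m^{1/2}}|^{3/2}$. Using $|1 - z_n^{1/2}\overline{z_m^{1/2}}| \gtrsim |1 - z_n \overline{z_m}|$ and the key lossy bound $|1 - z_n \overline{z_m}| \ge 1 - |z_n|$ to convert one factor of $|1 - z_n \overline{z_m}|^{-1/2}$ into $(1 - |z_n|)^{-1/2}$, the full inter-cluster sum reduces to a constant multiple of $\sum_{m \ne n} \frac{((1 - |z_n|)(1 - |z_m|))^{1/2}}{|1 - z_n \overline{z_m}|}$. This is exactly the $\ell^2 \to \ell^\infty$ Gramian sum for $(z_n)$ in $\mathcal D_{1/2}$, which is bounded uniformly in $n$ by the US hypothesis. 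The main technical obstacle is reconciling the exponent $3/2$ produced by Lemma \ref{lem:cos_estimate} with the exponent $1$ appearing in the $\mathcal D_{1/2}$ separation condition, and it is precisely the lossy bound $|1 - z_n \overline{z_m}| \ge 1 - |z_n|$, balanced against the $(1 - |z_n|)$ factor from the Gramian normalization, that closes the gap.
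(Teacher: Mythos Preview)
Your proposal is correct and follows essentially the same approach as the paper: the same lifts via $r$ and Lemma~\ref{lem:embedding} for (a)--(c), and for (d) the same reduction via Lemma~\ref{lem:cos_estimate} to the $\mathcal D_{1/2}$ uniform-separation sum, using the same pair of elementary inequalities $|1 - z_n^{1/2}\overline{z_m^{1/2}}| \gtrsim |1 - z_n \overline{z_m}|$ and $|1 - z_n \overline{z_m}| \ge 1 - |z_n|$ (you apply them in the opposite order, which is immaterial). Your explicit verification of the hypothesis of Lemma~\ref{lem:cos_estimate} and your separate treatment of the intra-cluster contribution are minor elaborations the paper omits.
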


\begin{proof}
  Let $w_{(n,j)} = 
    2^{-1/2} z_n^{1/2} (e^{\frac{2 \pi i j}{N_n}}, e^{\frac{- 2 \pi i j}{N_n}})$
    and observe that $r(w_{(n,j)}) = z_n$.

    (a)
  Since $(z_n)$ is strongly separated for $\mathcal{D}_{1/2}$, there exists a constant $C_1 \ge 0$
  and multipliers $\varphi_n \in \Mult(\mathcal{D}_{1/2})$ with $\|\varphi_n\|_{\Mult(\mathcal{D}_{1/2})} \le C_1$
  and $\varphi_n(z_m) = \delta_{n m}$.
  Lemma \ref{lem:US_ball} (see also the discussion following Proposition \ref{prop:uniformly_separated}) shows that for each $n$, the points $(w_{(n,j)})_{j=0}^{N_n-1}$
  are strongly separated in $H^2_2$ with strong separation constant independent of $n$.
  Thus, there exists a constant $C_2 \ge 0$ and multipliers $\psi_{(n,l)} \in \Mult(H^2_2)$
  with $\|\psi_{(n,l)}\|_{\Mult(H^2_2)} \le C_2$ such that $\psi_{(n,l)}(w_{(n,j)}) = \delta_{lj}$.
  Define $\theta_{(n,l)} = (\varphi_n \circ r) \cdot \psi_{(n,l)}$. Lemma \ref{lem:embedding} shows
  shows that $\|\theta_{(n,l)}\|_{\Mult(H^2_2)} \le C$ for some constant $C$.
  Moreover,
  \begin{equation*}
    \theta_{(n,l)}(w_{(m,j)}) = \varphi_n(z_m) \psi_{(n,l)}(w_{(m,j)})
    = \delta_{ (n,l) (m,j)}.
  \end{equation*}
  This shows that $W$ is strongly separated for $H^2_2$.

  (b) and (c)
  Let $f \in \mathcal{D}_{1/2}$. Then
    \begin{align*}
      &\sum_n |f(z_n)|^2 (1 - |z_n|^2)^{1/2}\\
      = &\sum_n \frac{(1 - |z_n|^2)^{1/2}}{\lfloor \frac{N_n}{4} \rfloor + 1} \sum_{j=0}^{\lfloor \frac{N_n}{4} \rfloor } |(f \circ r)(w_{(n,j)})|^2 \\
      \lesssim & \sum_n (1 - |z_n|^2) \sum_j |(f \circ r)(w_{(n,j)})|^2 \\
      \lesssim &\sum_{(n,j)} |(f \circ r)(w_{(n,j)})|^2 (1 - \|w_{(n,j)}\|^2).
    \end{align*}
    Choosing $f=1$, we obtain (b).

    To prove (c), recall
    that $\|f \circ r\|_{H^2_d} \lesssim \|f\|_{\mathcal{D}_{1/2}}$ by Lemma \ref{lem:embedding},
    so if $W$ satisfies the Carleson measure condition for $H^2_2$, then $(z_n)$ satisfies the Carleson
    measure condition for $\mathcal{D}_{1/2}$.

    (d) Suppose that $(z_n)$ is uniformly separated for $\mathcal{D}_{1/2}$. Part (a) shows that that $W$
    is in particular weakly separated for $H^2_2$, so we have to show that
    \begin{equation*}
      \sup_{n,j} \sum_{(m,l)} \frac{(1 - \|w_{(n,j)}\|^2) (1 - \|w_{(m,l)}\|^2)}{ |1 - \langle w_{(n,j)}, w_{(m,l)} \rangle |^2 } < \infty.
    \end{equation*}
    To this end, we estimate with the help of Lemma \ref{lem:cos_estimate}
    \begin{align*}
      &\sum_{m=0}^\infty \sum_{l=0}^{\lfloor \frac{N_m}{4} \rfloor}
      \frac{(1 - \|w_{(n,j)}\|^2) (1 - \|w_{(m,l)}\|^2)}{ |1 - \langle w_{(n,j)}, w_{(m,l)} \rangle |^2 }\\
      =& \sum_{m=0}^\infty  \sum_{l=0}^{\lfloor \frac{N_m}{4} \rfloor} \frac{(1 - |z_n|) (1 - |z_m|)}{|1 - z_n^{1/2} \overline{z_m^{1/2}} \cos( 2 \pi ( \frac{k}{N_n} - \frac{l}{N_m}))|^2} \\
      \lesssim &\sum_{m=0}^\infty \frac{N_m (1 - |z_n|)(1 - |z_m|)}{|1 - z_n^{1/2} \overline{z_m^{1/2}}|^{3/2}} \\
      \lesssim &\sum_{m=0}^\infty \frac{(1 - |z_n|)^{1/2} (1 - |z_m|)^{1/2}}{ | 1- z_n^{1/2} \overline{z_m^{1/2}}|} \\
      \lesssim &\sum_{m=0}^\infty \frac{(1 - |z_n|^2)^{1/2} (1 - |z_m|^2)^{1/2}}{ | 1- z_n \overline{z_m}|},
    \end{align*}
    where in the penultimate step, we used the definition of $N_m$ and the estimate $|1 - z_n^{1/2} \overline{z_m^{1/2}}|
    \ge \frac{1}{2} (1 - |z_n|)$, and in the last step, we applied the inequality $|1 - z_n^{1/2} \overline{z_m^{1/2}}| \ge \frac{1}{2} |1 - z_n \overline{z_m}|$. Uniform separation for $\mathcal{D}_{1/2}$ shows that the right-hand
    side above is bounded uniformly in $n$, hence $W$ is uniformly separated for $H^2_2$.
\end{proof}

As a consequence, we can establish Theorem \ref{thm:counterexamples_intro} (a) and (c) for $H^2_d$.

\begin{corollary}
  \label{cor:DA_counterexamples}
  Let $d \ge 2$. There exists a sequence in $\mathbb{B}_d$ that is strongly separated and that generates an infinite measure
  for $H^2_d$. Moreover, there exists a sequence that is uniformly separated, but not interpolating for $H^2_d$.
\end{corollary}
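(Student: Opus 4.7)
The plan is to combine the disc-level constructions from Sections \ref{sec:us_not_is} and \ref{sec:si_im} with the lifting Proposition \ref{prop:ball_counterexamples} to settle the case $d = 2$, and then to embed $\mathbb{B}_2 \hookrightarrow \mathbb{B}_d$ by appending zero coordinates to cover all $d \ge 2$.

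For the strongly separated, infinite-measure sequence in $\mathbb{B}_2$, I would first invoke Proposition \ref{prop:DA_infinite_measure} with $a = 1/2$ to produce a sequence $(z_n) \subset \mathbb{D}$ that is strongly separated for $\mathcal{D}_{1/2}$ and that generates an infinite measure. Feeding $(z_n)$ into parts (a) and (b) of Proposition \ref{prop:ball_counterexamples} then yields the desired sequence $W \subset \mathbb{B}_2$ for $H^2_2$.

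For the uniformly separated but non-interpolating sequence, I would apply Proposition \ref{prop:D_a_unif_sep_not_int} with $a = 1/2$ to obtain a sequence $(z_n) \subset \mathbb{D}$ that is uniformly separated but not interpolating for $\mathcal{D}_{1/2}$. Since (US) implies (WS) and the equivalence (IS) $\Leftrightarrow$ (C) + (WS) holds in every complete Pick space, the Carleson measure condition must fail for $(z_n)$. Parts (c) and (d) of Proposition \ref{prop:ball_counterexamples} then produce $W \subset \mathbb{B}_2$ that is uniformly separated for $H^2_2$ but fails (C), hence is not interpolating (again by the same equivalence, since uniform separation gives weak separation).

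To extend to $d > 2$, identify $\mathbb{B}_2$ with the coordinate slice $\{(w_1, w_2, 0, \ldots, 0) : (w_1, w_2) \in \mathbb{B}_2\} \subset \mathbb{B}_d$. Under this embedding, inner products and hence Drury--Arveson kernel values are preserved, so the normalized-kernel Gramian of the lifted sequence in $H^2_d$ coincides with its Gramian in $H^2_2$. Every property at issue is Gramian-based: (C) by Proposition \ref{prop:BG}, (US) by Proposition \ref{prop:uniformly_separated}, (FM) by inspection of the diagonal, and, most importantly, (SS) by Theorem \ref{thm:simply_interpolating} combined with Proposition \ref{prop:SI_Gramian}. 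The main point requiring care is this last transfer, since strong separation is a priori a multiplier-level condition; only the equivalence with simple interpolation supplied by our Theorem \ref{thm:simply_interpolating} lets it be read off the Gramian and thus transferred from $H^2_2$ to $H^2_d$. Beyond this observation, the corollary is a straightforward assembly of the constructions already in place.
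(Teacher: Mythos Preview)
Your proof is correct and follows essentially the same approach as the paper: lift the $\mathcal{D}_{1/2}$ counterexamples to $\mathbb{B}_2$ via Proposition~\ref{prop:ball_counterexamples}, then embed $\mathbb{B}_2$ into $\mathbb{B}_d$ using that the kernel restricts. Two minor remarks: the paper cites Example~\ref{exa:infinite_measure} rather than Proposition~\ref{prop:DA_infinite_measure} for the infinite-measure sequence (either works at $a=1/2$), and your transfer of (SS) to $H^2_d$ via Theorem~\ref{thm:simply_interpolating} is heavier than necessary---since both $H^2_2$ and $H^2_d$ are Pick spaces, Proposition~\ref{prop:SS_Pick} already identifies (SS) with uniform minimality, which is determined by the Gramian (alternatively, one can simply extend the separating multipliers by composing with the coordinate projection $\mathbb{B}_d\to\mathbb{B}_2$).
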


\begin{proof}
  If $d = 2$, then the result follows from
  Proposition \ref{prop:D_a_unif_sep_not_int}, Example \ref{exa:infinite_measure} and Proposition \ref{prop:ball_counterexamples}.
  For general $d \ge 2$, we embed $\mathbb{B}_2$ into $\mathbb{B}_d$ in the obvious way.
  Since the restriction of the kernel of $H^2_d$ to $\mathbb{B}_2$ is the kernel of $H^2_2$, this gives the desired counterexamples for $d \ge 2$.
\end{proof}

\begin{remark}
  By embedding $\mathbb{D}$ into $\mathbb{B}_d$, one also obtains a strongly separated infinite measure sequence
  and a uniformly separated not interpolating sequence for the space on $\mathbb{B}_d$ with kernel
  $\frac{1}{(1 - \langle z,w \rangle)^a }$, where $0 < a < 1$.
\end{remark}

Finally, we establish Theorem \ref{thm:counterexamples_intro} (b).

\begin{proposition}
  Let $\mathcal{H}$ be one of the spaces $H^2_d$ for $d \ge 2$, or $\mathcal{D}_a$ for $0 < a < 1$.
  Then there exists a strongly separated sequence that generates a finite measure and that is not uniformly separated.
\end{proposition}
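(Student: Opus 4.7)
I would prove the proposition in two parts, according to the type of space $\mathcal{H}$.

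For $\mathcal{D}_a$ with $0 < a \leq 1/2$, the sequence constructed in Example \ref{example:circulant} already has all three required properties, as noted at the end of that example.

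For $\mathcal{D}_a$ with $1/2 < a < 1$, I would adapt the circulant-cluster construction of Example \ref{example:circulant}. Taking clusters of $M_N$ equispaced points on circles of radii $r_N$, with $(M_N, r_N)$ chosen appropriately for the exponent $a$, one would arrange that: the circulant Gramian of each cluster is bounded below (strong separation, by the circulant eigenvalue argument of Example \ref{example:circulant}); the series $\sum_N M_N (1-r_N^2)^a$ converges (finite measure); and the $\ell^2$-norms of certain Gramian columns tend to infinity (failure of uniform separation, via Lemma \ref{lem:equidistributed}(b)). Lemma \ref{lem:assembly} would then combine the clusters into a single sequence with the required properties.

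For $H^2_d$ with $d \geq 2$, it suffices to construct the sequence in $\mathbb{B}_2$ and use the inclusion $\mathbb{B}_2 \hookrightarrow \mathbb{B}_d$, under which the $H^2_d$-kernel restricts to the $H^2_2$-kernel. The strategy is to lift a suitable base sequence $(z_n) \subset \mathbb{D}$ from $\mathcal{D}_{1/2}$ to a sequence $W \subset \mathbb{B}_2$ via Proposition \ref{prop:ball_counterexamples}. Taking $(z_n)$ to be the sequence from Example \ref{example:circulant}, strong separation of $W$ follows from Proposition \ref{prop:ball_counterexamples}(a), and finite measure follows from the identity
\[
  \sum_{(n,j)} (1 - \|w_{(n,j)}\|^2) \simeq \sum_n (1-|z_n|)^{1/2} < \infty,
\]
whose right-hand side is finite because Example \ref{example:circulant} generates a finite measure in $\mathcal{D}_{1/2}$.

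The hardest step will be verifying that $W$ fails uniform separation in $H^2_2$. Since Proposition \ref{prop:ball_counterexamples}(d) only provides the implication in the positive direction (uniform separation in $\mathcal{D}_{1/2}$ lifts to uniform separation in $H^2_2$), a separate argument is needed on the $H^2_2$-side. I would estimate some column of the $H^2_2$-Gramian of $W$ from below by a direct computation, using Lemma \ref{lem:cos_estimate} together with the explicit circulant structure within each lifted cluster, tracking how the unbounded growth of column $\ell^2$-norms in $\mathcal{D}_{1/2}$ manifests after the lift. If a direct lift of Example \ref{example:circulant} should prove insufficient (because the within-cluster interactions of the lifted Gramian are more regular than those of the $\mathcal{D}_{1/2}$-Gramian), I would modify the base sequence, for example by varying the number of points per $\mathcal{D}_{1/2}$-cluster or by superimposing clusters at multiple scales, so that the column $\ell^2$-norms in $H^2_2$ grow unboundedly.
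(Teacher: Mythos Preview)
Your plan has a genuine gap in the range $1/2 < a < 1$. The circulant eigenvalue computation of Example \ref{example:circulant} forces an incompatible pair of constraints. Indeed, with $M_N$ equispaced points on the circle of radius $r_N$, the least eigenvalue satisfies
\[
  \lambda_{M_N-1} \simeq_a (M_N(1-r_N))^a \sum_{m\ge 0}(m+1)^{a-1} r_N^{2M_N(m+1)},
\]
and this is bounded below only when $M_N(1-r_N)$ stays bounded (otherwise $r_N^{2M_N}\to 0$ exponentially). On the other hand, Lemma \ref{lem:equidistributed}(b) gives, for $a>1/2$,
\[
  \sup_j \sum_{n\neq j}|G_{nj}|^2 \simeq_a \bigl(M_N(1-r_N)\bigr)^{2a},
\]
so the column norms blow up only when $M_N(1-r_N)\to\infty$. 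You cannot have both. Thus the circulant mechanism that drives Example \ref{example:circulant} is intrinsically limited to $a\le 1/2$, and your proposed ``adaptation'' cannot succeed as stated.

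The paper sidesteps this entirely with a short, uniform argument that handles all the spaces at once. It takes the infinite-measure strongly separated sequence $(z_n)$ already built in part (c) (Proposition \ref{prop:DA_infinite_measure} and Corollary \ref{cor:DA_counterexamples}), uses Theorem \ref{thm:simply_interpolating} to conclude it is simply interpolating, so the Gramians $G_N$ of the truncations $Z_N=\{z_1,\dots,z_N\}$ are uniformly bounded below. Infinite measure forces the $\ell^2$-norm of the first column of $G_N$ to tend to infinity (Proposition \ref{prop:US_FM}). Assembling the $Z_N$ via Lemma \ref{lem:assembly} yields a sequence whose Gramian is bounded below (hence SS), has unbounded column norms (hence not US), yet each column lies in $\ell^2$ (hence FM). This bypasses any direct eigenvalue computation and also covers $H^2_d$ without the lift-and-estimate program you outline, whose crucial step (failure of uniform separation after lifting) you leave unresolved.
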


\begin{proof}
  By Proposition \ref{prop:DA_infinite_measure} and Corollary \ref{cor:DA_counterexamples},
  there exists a sequence $(z_n)$ that is strongly separated, but that generates an infinite measure.
  Let $Z_N = \{z_n: 1 \le n \le N\}$. Let $G_N$ be the Gramian of the points in $Z_N$.
  Since $(z_n)$ is simply interpolating by Theorem \ref{thm:simply_interpolating}, there exists
  $\varepsilon > 0$ such that $G_N \ge 2 \varepsilon I$ for all $N$.
  Moreover, since $(z_n)$ generates an infinite measure, the $\ell^2$ norm of the first column of $G_N$
  tends to infinity; see Proposition \ref{prop:US_FM}.
  
  We now apply Lemma \ref{lem:assembly} with the given $\varepsilon$ and obtain automorphisms $\varphi_N$.
  Let $Z = \bigcup_{N \in \mathbb{N}} \varphi_N(Z_N)$. Then the Gramian $G$ of the points in 
  $Z$ is bounded below, hence $Z$ is simply interpolating and thus strongly separated.
  Moreover, the $\ell^2$ norms of the columns are unbounded, so uniform separation fails.
  Finally, each column individually belongs to $\ell^2$, so $Z$ generates a finite measure.
\end{proof}

\bibliographystyle{amsplain}
\bibliography{bibliography}

\providecommand{\bysame}{\leavevmode\hbox to3em{\hrulefill}\thinspace}
\providecommand{\MR}{\relax\ifhmode\unskip\space\fi MR }
\providecommand{\MRhref}[2]{%
  \href{http://www.ams.org/mathscinet-getitem?mr=#1}{#2}
}
\providecommand{\href}[2]{#2}
\begin{thebibliography}{10}

\bibitem{Agler88a}
J.~Agler, \emph{Some interpolation theorems of {N}evanlinna-{P}ick type},
  Preprint, 1988.

\bibitem{AM00}
J.~Agler and J.~E. McCarthy, \emph{Complete {N}evanlinna-{P}ick kernels}, J.
  Funct. Anal. \textbf{175} (2000), no.~1, 111--124. \MR{1774853 (2001h:47019)}

\bibitem{AM02}
\bysame, \emph{Pick interpolation and {H}ilbert function spaces}, Graduate
  Studies in Mathematics, vol.~44, American Mathematical Society, Providence,
  RI, 2002. \MR{1882259 (2003b:47001)}

\bibitem{AHM+17}
A.~Aleman, M.~Hartz, J.~E. McCarthy, and S.~Richter, \emph{Interpolating
  sequences in spaces with the complete {P}ick property}, Int. Math. Res. Not.
  IMRN (2019), no.~12, 3832--3854.

\bibitem{AHM+20a}
A.~Aleman, M.~Hartz, J.~E. McCarthy, and S.~Richter, \emph{Multiplier tests and
  subhomogeneity of multiplier algebras}, Doc. Math. \textbf{27} (2022),
  719--764. \MR{4432527}

\bibitem{AH10}
E.~Amar and A.~Hartmann, \emph{Uniform minimality, unconditionality and
  interpolation in backward shift invariant subspaces}, Ann. Inst. Fourier
  (Grenoble) \textbf{60} (2010), no.~6, 1871--1903. \MR{2791649}

\bibitem{Arcozzi16}
N.~Arcozzi, R.~Rochberg, and E.~Sawyer, \emph{Onto interpolating sequences for
  the {D}irichlet space}, arXiv:1605.02730 (2016).

\bibitem{ARS+11}
N.~Arcozzi, R.~Rochberg, E.~Sawyer, and B.~D. Wick, \emph{Distance functions
  for reproducing kernel {H}ilbert spaces}, Function spaces in modern analysis,
  Contemp. Math., vol. 547, Amer. Math. Soc., Providence, RI, 2011, pp.~25--53.
  \MR{2856478 (2012k:46036)}

\bibitem{ArcRochSawWick19}
Nicola Arcozzi, Richard Rochberg, Eric~T. Sawyer, and Brett~D. Wick, \emph{The
  {D}irichlet space and related function spaces}, Mathematical Surveys and
  Monographs, vol. 239, American Mathematical Society, Providence, RI, 2019.
  \MR{3969961}

\bibitem{Arveson98}
W.~Arveson, \emph{Subalgebras of {$C\sp *$}-algebras. {III}. {M}ultivariable
  operator theory}, Acta Math. \textbf{181} (1998), no.~2, 159--228.
  \MR{1668582 (2000e:47013)}

\bibitem{Beardon71}
A.~F. Beardon, \emph{Inequalities for certain {F}uchsian groups}, Acta Math.
  \textbf{127} (1971), 221--258. \MR{286996}

\bibitem{Berndtsson85}
B.~Berndtsson, \emph{Interpolating sequences for {$H^\infty$} in the ball},
  Nederl. Akad. Wetensch. Indag. Math. \textbf{47} (1985), no.~1, 1--10.
  \MR{783001}

\bibitem{Bishop94}
C.~J. Bishop, \emph{Interpolating sequences for the {D}irichlet space and its
  multipliers}, Preprint,
  http://www.math.stonybrook.edu/~bishop/papers/mult.pdf (1994).

\bibitem{Bogdan96}
K.~Bogdan, \emph{On the zeros of functions with finite {D}irichlet integral},
  Kodai Math. J. \textbf{19} (1996), no.~1, 7--16. \MR{1374458}

\bibitem{BCM+15}
M.~Bownik, P.~Casazza, A.~W. Marcus, and D.~Speegle, \emph{Improved bounds in
  {W}eaver and {F}eichtinger conjectures}, J. Reine Angew. Math. \textbf{749}
  (2019), 267--293. \MR{3935905}

\bibitem{Carleson58}
L.~Carleson, \emph{An interpolation problem for bounded analytic functions},
  Amer. J. Math. \textbf{80} (1958), 921--930. \MR{0117349 (22 \#8129)}

\bibitem{CCL+05}
P.~G. Casazza, O.~Christensen, A.~M. Lindner, and R.~Vershynin, \emph{Frames
  and the {F}eichtinger conjecture}, Proc. Amer. Math. Soc. \textbf{133}
  (2005), no.~4, 1025--1033. \MR{2117203}

\bibitem{Chalmoukis21}
N.~Chalmoukis, \emph{Onto interpolation for the {D}irichlet space and for
  ${H}_1(\mathbb{D})$}, Adv. Math. \textbf{381} (2021), 107634.

\bibitem{Chalmoukis23}
\bysame, \emph{A note on simply interpolating sequences for the {D}irichlet
  space}, New York J. Math. \textbf{29} (2023), 193--202.

\bibitem{Daase83}
D.~Daase, \emph{Automorphe {F}ormen und {C}arleson-{M}engen}, Complex Variables
  Theory Appl. \textbf{2} (1983), no.~1, 51--65. \MR{707785}

\bibitem{DalBo11}
F.~Dal'Bo, \emph{Geodesic and horocyclic trajectories}, Universitext,
  Springer-Verlag London, Ltd., London; EDP Sciences, Les Ulis, 2011,
  Translated from the 2007 French original. \MR{2766419 (2011j:37056)}

\bibitem{Garnett07}
J.~B. Garnett, \emph{Bounded analytic functions}, first ed., Graduate Texts in
  Mathematics, vol. 236, Springer, New York, 2007. \MR{2261424 (2007e:30049)}

\bibitem{Hartz17a}
M.~Hartz, \emph{On the isomorphism problem for multiplier algebras of
  {N}evanlinna-{P}ick spaces}, Canad. J. Math. \textbf{69} (2017), no.~1,
  54--106. \MR{3589854}

\bibitem{Hartz22a}
\bysame, \emph{An invitation to the {D}rury-{A}rveson space}, arXiv:2204.01559
  (2022).

\bibitem{Hartz20}
\bysame, \emph{Every complete {P}ick space satisfies the column-row property},
  Acta Math. (to appear), arXiv:2005.09614.

\bibitem{Katok92}
S.~Katok, \emph{Fuchsian groups}, Chicago Lectures in Mathematics, University
  of Chicago Press, Chicago, IL, 1992. \MR{1177168 (93d:20088)}

\bibitem{circulant}
I.~Kra and S.~R. Simanca, \emph{On circulant matrices}, Notices of the American
  Math. Soc. \textbf{59} (2012), no.~3, 368-- 377.

\bibitem{KS}
Mark Krosky and Alexander~P. Schuster, \emph{Multiple interpolation and
  extremal functions in the {B}ergman spaces}, J. Anal. Math. \textbf{85}
  (2001), 141--156. \MR{1869605}

\bibitem{MSS15}
A.~W. Marcus, D.~A. Spielman, and N.~Srivastava, \emph{Interlacing families
  {II}: {M}ixed characteristic polynomials and the {K}adison-{S}inger problem},
  Ann. of Math. (2) \textbf{182} (2015), no.~1, 327--350. \MR{3374963}

\bibitem{MS94a}
D.~Marshall and C.~Sundberg, \emph{Interpolating sequences for the multipliers
  of the {Dirichlet} space}, Preprint,
  \url{https://citeseerx.ist.psu.edu/viewdoc/download?doi=10.1.1.559.4031&rep=rep1&type=pdf},
  1994.

\bibitem{McCullough92}
S.~McCullough, \emph{Carath\'eodory interpolation kernels}, Integral Equations
  Operator Theory \textbf{15} (1992), no.~1, 43--71. \MR{1134687}

\bibitem{Pommerenke76}
Ch. Pommerenke, \emph{On automorphic forms and {C}arleson sets}, Michigan Math.
  J. \textbf{23} (1976), no.~2, 129--136. \MR{409800}

\bibitem{Quiggin93}
P.~Quiggin, \emph{For which reproducing kernel {H}ilbert spaces is {P}ick's
  theorem true?}, Integral Equations Operator Theory \textbf{16} (1993), no.~2,
  244--266. \MR{1205001 (94a:47026)}

\bibitem{K.V.RAJESWARA69}
K.~V.~Rajeswara Rao, \emph{Fuchsian groups of convergence type and {P}oincare
  series of dimension -2}, Journal of Mathematics and Mechanics \textbf{18}
  (1969), no.~7, 629--644.

\bibitem{Rudin1980}
W.~Rudin, \emph{Function theory in the unit ball of $\mathbb{C}^n$}, Springer
  New York, 1980.

\bibitem{SS00}
A.~P. Schuster and K.~Seip, \emph{Weak conditions for interpolation in
  holomorphic spaces}, Publ. Mat. \textbf{44} (2000), no.~1, 277--293.
  \MR{1775765}

\bibitem{SS98}
A.~P. Schuster and Kristian Seip, \emph{A {C}arleson-type condition for
  interpolation in {B}ergman spaces}, J. Reine Angew. Math. \textbf{497}
  (1998), 223--233. \MR{1617432}

\bibitem{Shalit13}
O.~Shalit, \emph{Operator theory and function theory in {D}rury-{A}rveson space
  and its quotients}, Operator Theory (Daniel Alpay, ed.), Springer, 2015,
  pp.~1125--1180.

\bibitem{SS61}
H.~S. Shapiro and A.~L. Shields, \emph{On some interpolation problems for
  analytic functions}, Amer. J. Math. \textbf{83} (1961), 513--532. \MR{0133446
  (24 \#A3280)}

\bibitem{Shirokov90}
N.~A. Shirokov, \emph{Fuchsian groups with {C}arleson limit sets}, Funktsional.
  Anal. i Prilozhen. \textbf{24} (1990), no.~4, 92--93. \MR{1092814}

\bibitem{TW71}
B.~A. Taylor and D.~L. Williams, \emph{Zeros of {L}ipschitz functions analytic
  in the unit disc}, Michigan Math. J. \textbf{18} (1971), 129--139.
  \MR{283176}

\bibitem{Vasin03}
A.~V. Vasin, \emph{The limit set of a {F}uchsian group and the {D}ynkin lemma},
  Zap. Nauchn. Sem. S.-Peterburg. Otdel. Mat. Inst. Steklov. (POMI)
  \textbf{303} (2003), no.~Issled. po Line\u{\i}n. Oper. i Teor. Funkts. 31,
  89--101, 322. \MR{2037532}

\bibitem{Zhu05}
K.~Zhu, \emph{Spaces of holomorphic functions in the unit ball}, Graduate Texts
  in Mathematics, vol. 226, Springer-Verlag, New York, 2005. \MR{2115155
  (2006d:46035)}

\end{thebibliography}

\end{document}